\newcommand{\ket}[1]{\ensuremath{|#1\rangle}}
\newcommand{\bra}[1]{\ensuremath{\langle#1|}}
\newcommand{\ketbra}[2]{\ensuremath{\ket{#1}\bra{#2}}}
\newcommand{\Tr}{\mathrm{Tr}}
\newcommand{\tr}{\Tr}
\newcommand{\1}{{\rm 1\hspace{-0.9mm}l}}
\newcommand{\ii}{\mathrm{i}}
\newcommand{\ee}{\mathrm{e}}
\newcommand{\C}{\mathbb{C}}
\newtheorem{theorem}{Theorem}
\newtheorem{corollary}[theorem]{Corollary}
\newtheorem{definition}[theorem]{Definition}
\newtheorem{lemma}[theorem]{Lemma}
\newtheorem{proposition}[theorem]{Proposition}
\newtheorem{remark}[theorem]{Remark}
\newenvironment{proof}[1][Proof]{\noindent\textbf{#1.} }{\ \rule{0.5em}{0.5em}}
\begin{document}
\begin{frontmatter}
\title{Real numerical shadow and generalized B-splines}

\author[virginia]{Charles F. Dunkl}
\author[iitis]{Piotr Gawron}
\author[iitis]{{\L}ukasz Pawela}
\author[iitis,uj]{Zbigniew Pucha{\l}a}
\author[uj,cft]{Karol {\.Z}yczkowski}

\address[virginia]{Department of Mathematics 
    Kerchof Hall, Room 223, P.O. Box 400137 
    University of Virginia, Charlottesville, VA 22904-4137}

\address[iitis]{Institute of Theoretical and Applied Informatics, Polish Academy
    of Sciences, Ba{\l}tycka 5, 44-100 Gliwice, Poland}

\address[uj]{Institute of Physics, Jagiellonian University, ul. Reymonta 4, 
    30-059 Krak{\'o}w, Poland}

\address[cft]{Center for Theoretical Physics, Polish Academy of Sciences, 
al. Lotnik{\'o}w 32/46, 02-668 Warszawa, Poland}

\begin{abstract}
Restricted numerical shadow  $P^X_A(z)$ of an operator $A$ of order $N$
is a probability distribution supported on the numerical range 
$W_X(A)$ restricted to a certain subset $X$ of the set of all pure states --
normalized, one--dimensional vectors in ${\mathbb C}^N$.
Its value at point $z \in {\mathbb C}$ equals to the
probability that the inner product $\langle u |A| u \rangle$ is equal to
$z$, where $u$ stands for a random complex vector from the set $X$
distributed according to the natural measure on this set, induced by the
unitarily invariant Fubini--Study measure. 
For a Hermitian  operator $A$ of order $N$ we derive an explicit formula 
for its shadow restricted to real states, $P^{\mathbb R}_A(x)$,
show relation of this density to the Dirichlet distribution and demonstrate
that it forms a generalization of the $B$--spline.
Furthermore, for operators acting on a space with tensor product structure,
${\cal H}_A \otimes {\cal H}_B$, we analyze the shadow restricted to the
set of maximally entangled states and derive distributions for operators of order
$N=4$. 
\end{abstract}

\begin{keyword}
  numerical range \sep  probability measures 
\sep numerical shadow \sep  B--splines
  \MSC{47A12 \sep 60B05 \sep 81P16 \sep 33C05 \sep 51M15 }
\end{keyword}
\end{frontmatter}

\section{Introduction}

Consider a complex square matrix $A$ or order $N$. Its standard 
{\sl numerical range} is defined as the following subset of the complex plane,
$$W(A)=\{\bra{u}A\ket{u}:u \in {\mathbb{C}}^N, \|u\|=1\},$$ 
where $u$ denotes a normalized complex vector in ${\cal H}_N$.
Due to the Toeplitz--Hausdorff theorem this set is convex, while for a
Hermitian $A$ it forms an interval belonging to the real axis -- 
see e.g. \cite{Dav,GR1977,Gu04}.

Among numerous generalizations of this notion
we will be concerned with the {\sl restricted numerical range},
\begin{equation}
W_X(A)=\{\bra{u}A\ket{u}:u \in \omega_X \},
\end{equation}
where $\omega_X$ forms a certain subset of the set $\omega$
 of normalized
complex vectors of size $N$. For instance, one can choose $\omega_X$
as the set of all real vectors, and analyze the 'real shadow'
of $A$, denoted by $W_{\mathbb R}(A)$.
 For  an operator $A$ acting on a composed space, 
one studies also numerical range restricted to 
 tensor product states, $W_{\otimes}(A)$,
and the range $W_{E}(A)$
restricted to maximally entangled states \cite{rrange2010,rrange2011}.
 It is worth to emphasize
a crucial difference with respect to the standard notion:
the resticted numerical range needs not to be convex.

In order to define a probability measure supported on 
numerical range of $W(A)$ it is sufficient to consider the 
uniform measure on the sphere $S^{2N-1}$ and the measure induced by the map
$u\to \bra{u}A\ket{u}\in W(A)$ \cite{Shd1,GS2012}.
Alternatively, one considers the space
of quantum states -- equivalence classes of normalized vectors in  
${\mathbb{C}}^N$, which differ by a complex phase, $u \sim e^{i \alpha}u$,
and works with the Haar measure invariant under the action
of the unitary group \cite{shadow1}.
For any matrix $A$ one defines in this way
a probability measure $P_A(z)$ supported on $W(A)$
and  called {\sl numerical shadow} \cite{Shd1} or
{\sl numerical measure} \cite{GS2012}.
The former name is inspired by the fact that
for a normal matrix this measure can be interpreted as
a shadow of an uniformly covered $(N-1)$ dimensional regular
simplex projected on a plane \cite{shadow1,gutkin2013joint}.
In a similar fashion, one can consider numerical shadow of matrices
over the quaternion field, defined as the pushforward measure of
the uniform measure on the sphere $S^{4N-1}$.

Even though several papers on numerical shadow were published
during the last five years \cite{Shd1,GS2012,shadow1}, 
the idea to associate  with the numerical range a probability measure
is much older: as described in a recent review by Holbrook \cite{Hol14}
it goes back to the early papers of Davis \cite{Dav}.

Another variant of the numerical shadow of $A$ can be obtained
by taking random points from the subset $\omega_X$ of the set
of pure states. The corresponding probability measure $P_A^X(z)$, 
called  {\sl restricted numerical shadow} \cite{shadow3},
is by definition supported in restricted numerical range $W_X(A)$.
More generally, one may take an arbitrary probability measure $\mu$
on the set of all pure states (or on the hypersphere $S^{2N-1}$)
and study the measure induced in the numerical range of $A$.

Let $A$ denote a Hermitian matrix of size $N$,
so its numerical range is an interval on the real axis.
The probability distribution generated by the map
$\eta\mapsto\left\langle \eta|A|\eta\right\rangle $, where $\eta$ is a random
point on the unit sphere $\left\{  \eta\in\mathbb{C}^{N}:\sum_{j=1}%
^{N}\left\vert \eta_{j}\right\vert ^{2}=1\right\}  $ equipped with the
unitary-invariant surface measure,
is then equal to the \textit{shadow} of $A$.

It will be convenient to introduce the set ${\Omega}_N$
containing density matrices of order $N$, i.e. 
Hermitian positive definite operators, normalized by the trace condition,
$\rho^*=\rho\ge 0$ with ${\rm Tr}\rho=1$. 
The set ${\Omega}_N$ is convex as it can be considered
as the convex hull of the set of projectors on the pure states of 
dimension $N$ -- see e.g. \cite{bengtsson2006geometry}.
Specifying a measure $\mu$ on the set of density matrices 
allows us to propose a more general definition of numerical shadow.

\begin{definition}
For a given $N \times N$ matrix $A$ and a probability measure $\mu$ 
on the space ${\Omega}_N$ of density matrices of order $N$
we define the numerical shadow of matrix $A$ with respect
to $\mu$ as function on complex numbers
\begin{equation}
\mathcal{P}^{\mu}_A (z) = \int_{\Omega_N} d \mu (\rho) \delta (z - \tr A \rho).
\end{equation}
\end{definition}
The standard numerical shadow,  defined in~\cite{shadow1} and denoted by
$\mathcal{P}_A (z)$, fulfills the above
definition with $\mu$ supported on a pure states invariant to unitary
transformations. 
In fact all restricted numerical shadow presented
in~\cite{shadow3} can be written in the above form.

The main goal of this work is to describe restricted numerical shadow
for several relevant cases. For any symmetric real matrix $A$
we derive its real numerical shadow. To this end we use 
Dirichlet distributions, the properties of which are reviewed
in Sec.~\ref{sec:dirichlet}. We demonstrate that in this case the real shadow 
has the same distribution as a linear combination of 
components of a random vector generated by the Dirichlet distribution.

In Sec.~\ref{sec:splines} we briefly discuss $B$--splines, which correspond to 
complex shadows of Hermitian matrices, and show
their link to generalized Dirichlet distributions
Complex and real shadows of illustrative normal matrices are 
compared in Sec.~\ref{sec:symmetric}, in which some results are obtained
for the case of Hermitian matrices.

Main result of this work --- Theorem~\ref{th:main-theorem}, which characterizes 
the real shadow of real symmetric matrices, is presented in Sec.~\ref{sec:real-shadow}.
Continuity of the shadow at knots is discussed in Sec.~\ref{sec:knots}, while 
formulae for the shadow with respect of real maximally entangled states
for any matrix of size $N=4$ are derived in Sec.~\ref{sec:entangled-shadow}.

\section{The Dirichlet Distribution}\label{sec:dirichlet}

Let $\mathbb{T}_{N-1}$ in $\mathbb{R}^{N-1}$ denotes 
the unit simplex of $N$--point probability distributions,
\begin{equation}
\mathbb{T}_{N-1}:=\left\{  \left(  t_{1},\ldots,t_{N-1}\right)  \in
\mathbb{R}^{N-1}:t_{i}\geq0\forall i,\ \sum_{i=1}^{N-1}t_{i}\leq1\right\} .
\end{equation}
The {\sl Dirichlet} distribution is  a measure  $\mu_{\mathbf{k}}$ on the simplex
 $\mathbb{T}_{N-1}$ parameterized by a vector ${\mathbf{k}}$ of
 $N$ real numbers  $k_{1},\ldots,k_{N}>0$,
\begin{equation}
d\mu_{\mathbf{k}}=\frac{\Gamma\left(  \sum_{i=1}^{N}k_{i}\right)  }%
{\prod_{i=1}^{N}\Gamma\left(  k_{i}\right)  }\prod\limits_{i=1}^{N-1}%
t_{i}^{k_{i}-1}\left(  1-\sum_{i=1}^{N-1}t_{i}\right)  ^{k_{N}-1}dt_{1}\ldots
dt_{N-1}.
\end{equation}
Note that the choice ${\mathbf{k}}=\{1,1,\dots,1\}$ 
gives the flat, Lebesgue measure on the simplex,
while the case 
${\mathbf{k}}=\{1/2,1/2,\dots,1/2\}$ corresponds 
to the statistical distribution -- see e.g. \cite{bengtsson2006geometry}.

Set $\widetilde{k}:=\sum_{i=1}^{N}k_{i}$. For $\alpha\in\mathbb{N}_{0}^{N}$
let $\alpha!:=\prod_{i=1}^{N}\alpha_{i}!,\left\vert \alpha\right\vert
:=\sum_{i=1}^{N}\alpha_{i}$ and $t^{\alpha}:=%
{\textstyle\prod_{i=1}^{N-1}}
t_{i}^{\alpha_{i}}\left(  1-\sum_{i=1}^{N-1}t_{i}\right)  ^{\alpha_{N}}$. 
It follows from the Dirichlet integral that%
\begin{equation}
\label{dirinteg}
\int_{\mathbb{T}_{N-1}}t^{\alpha}d\mu_{\mathbf{k}}=\frac{1}{\left(
\widetilde{k}\right)  _{\left\vert \alpha\right\vert }}%
{\textstyle\prod_{i=1}^{N}}
\left(  k_{i}\right)  _{\alpha_{i}},
\end{equation}
where $\left(  x\right)  _{n}:=%
{\textstyle\prod_{i=1}^{n}} \left(  x+i-1\right)$
denotes the Pochhammer product. 
It satisfies an important asymptotic relationship:
$\left(  z\right)  _{n}=\frac{\Gamma\left(  z+n\right)  }{\Gamma\left(
z\right)  }\sim z^{n}$ as $x\rightarrow\infty$ in the complex half-plane
$\left\{  z:\operatorname{Re}z>0\right\}$.

 Consider the random vector corresponding to choosing a point
 in $\mathbb{T}_{N-1}$ according to
$d\mu_{\mathbf{k}}$ with components $\left(  T_{1},\ldots,T_{N}\right)  $ with
$T_{N}:=1-\sum_{i=1}^{N-1}T_{i}$.
We select an arbitrary vector of $N$ real numbers ordered increasingly,
$a_{1} \leq a_{2}\leq\ldots\leq a_{N}$,
and will be concerned with the probability distribution of their
weighted average,
\begin{align*}
X  & =\sum_{i=1}^{N}a_{i}T_{i}.
\end{align*}
\begin{definition}
The distribution of random variable $X$ will be denoted as
\begin{equation}
\mathcal{D}\left( a_{1},\ldots,a_{N};k_{1},\ldots,k_{N}\right)  .\label{DakN}%
\end{equation}
\end{definition} 
In the case some values of $a_{i}$ are repeated some formulae have to be modified.
It is clear that $a_{1}\leq X\leq a_{N}$. 
There is a moment generating function for $X$.

Let $F\left(  x\right)  $ denote the cumulative distribution function of $X$,
that is,%
\begin{equation}
F\left(  x\right)  :=\Pr\left\{  X\leq x\right\}  .
\end{equation}

\begin{lemma}\label{lemma:exp-value}
\label{mgenF}Suppose $\left\vert r\right\vert <\min_{i}\frac{1}{\left\vert
a_{i}\right\vert }$ then%
\begin{equation}
\mathcal{E}\left[  \left(  1-rX\right)  ^{-\widetilde{k}}\right]  =%
{\textstyle\prod_{i=1}^{N}}
\left(1-ra_{i}\right)  ^{-k_{i}}.
\end{equation}
\end{lemma}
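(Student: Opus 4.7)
The plan is to verify this identity by expanding both sides as power series in $r$ and matching coefficients, using the Dirichlet moment formula \eqref{dirinteg} as the main computational tool. Since the hypothesis $|r|<\min_i 1/|a_i|$ guarantees $|ra_i|<1$ for every $i$, all the series involved converge absolutely and the usual interchanges will be justified.

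First I would expand the left-hand side via the generalized binomial series
\begin{equation*}
(1-rX)^{-\widetilde{k}} = \sum_{n=0}^{\infty} \frac{(\widetilde{k})_n}{n!}\, r^n X^n,
\end{equation*}
and then expand $X^n = \bigl(\sum_{i=1}^N a_i T_i\bigr)^n$ using the multinomial theorem, obtaining
\begin{equation*}
X^n = \sum_{|\alpha|=n} \frac{n!}{\alpha!}\, a^{\alpha} T^{\alpha},
\end{equation*}
where $a^{\alpha}:=\prod_i a_i^{\alpha_i}$ and $T^{\alpha}$ is as defined before \eqref{dirinteg} (with $T_N=1-\sum_{i<N}T_i$).

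Next I would take expectations term by term and apply the Dirichlet integral \eqref{dirinteg}, which gives $\mathcal{E}[T^{\alpha}] = (\widetilde{k})_{|\alpha|}^{-1}\prod_{i=1}^N (k_i)_{\alpha_i}$. The factor $(\widetilde{k})_n$ from the outer binomial series cancels against $(\widetilde{k})_{|\alpha|}^{-1}$ since $|\alpha|=n$, leaving
\begin{equation*}
\mathcal{E}\!\left[(1-rX)^{-\widetilde{k}}\right] = \sum_{n=0}^{\infty} \sum_{|\alpha|=n} \prod_{i=1}^{N} \frac{(k_i)_{\alpha_i}\,(ra_i)^{\alpha_i}}{\alpha_i!} = \sum_{\alpha\in\mathbb{N}_0^N}\prod_{i=1}^{N} \frac{(k_i)_{\alpha_i}\,(ra_i)^{\alpha_i}}{\alpha_i!}.
\end{equation*}
Finally, the last sum factors as an $N$-fold product of one-variable series, each of which is the binomial series $\sum_{m\ge 0} (k_i)_m (ra_i)^m/m! = (1-ra_i)^{-k_i}$, yielding the claimed identity.

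The only genuine subtlety is justifying the interchange of expectation with the infinite series; I would handle this by noting that $|X|\le \max_i |a_i|$ almost surely on the simplex, so $|rX|<1$, and the dominated convergence theorem (or equivalently Fubini applied to the absolutely convergent double sum above) makes every step rigorous. The algebraic manipulation itself is routine once the Pochhammer identity $(\widetilde{k})_n = (\widetilde{k})_{|\alpha|}$ for $|\alpha|=n$ is exploited to decouple the indices $\alpha_1,\ldots,\alpha_N$.
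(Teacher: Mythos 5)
Your proof is correct and follows essentially the same route as the paper's: negative binomial expansion of $(1-rX)^{-\widetilde{k}}$, the multinomial theorem for $X^n$, the Dirichlet moment formula \eqref{dirinteg}, cancellation of the Pochhammer factors $(\widetilde{k})_n$ and $(\widetilde{k})_{|\alpha|}$, and factorization of the resulting sum over $\alpha\in\mathbb{N}_0^N$ into a product of one-variable binomial series. Your added justification of the series--expectation interchange via $|X|\le\max_i|a_i|$ and dominated convergence is a welcome bit of rigor that the paper leaves implicit.
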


This Lemma, proof of which is provided in \ref{sec:proofs} ,
 implicitly gives an expression for the moments,
 $\mathcal{E}\left[  X^{n}\right]$,
 because $\left(  1-rX\right)  ^{-\widetilde{k}}=\sum_{n=0}^{\infty}%
\frac{\left(  \widetilde{k}\right)  _{n}}{n!}r^{n}X^{n}$.

\begin{corollary}
\label{mean&var}The mean $\mu:=\mathcal{E}\left[  X\right]  =\frac
{1}{\widetilde{k}}\sum_{i=1}^{N}k_{i}a_{i}$ and the variance \linebreak
$\mathcal{E}%
\left[  \left(  X-\mu\right)  ^{2}\right]  =\frac{1}{\widetilde{k}\left(
\widetilde{k}+1\right)  }\sum_{i=1}^{N}k_{i}\left(  a_{i}-\mu\right)  ^{2}$.
\end{corollary}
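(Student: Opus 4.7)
The plan is to extract both moments by expanding the moment generating identity of Lemma~\ref{mgenF} as a power series in $r$ around $r=0$ and matching the coefficients of $r$ and $r^2$.

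On the left-hand side, substituting the binomial series $(1-rX)^{-\widetilde{k}} = \sum_{n\ge 0} \frac{(\widetilde{k})_n}{n!} r^n X^n$ and taking expectations yields
\begin{equation*}
\mathcal{E}\bigl[(1-rX)^{-\widetilde{k}}\bigr] = 1 + \widetilde{k}\, r\, \mathcal{E}[X] + \tfrac{1}{2}\widetilde{k}(\widetilde{k}+1)\, r^2\, \mathcal{E}[X^2] + O(r^3).
\end{equation*}
On the right-hand side, I would expand each factor $(1-ra_i)^{-k_i} = 1 + k_i a_i r + \tfrac{1}{2} k_i(k_i+1) a_i^2 r^2 + O(r^3)$ and multiply out. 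The coefficient of $r$ is $\sum_i k_i a_i$, and the coefficient of $r^2$ is $\tfrac{1}{2}\sum_i k_i(k_i+1) a_i^2 + \sum_{i<j} k_i k_j a_i a_j$, which, after writing $k_i(k_i+1) = k_i + k_i^2$ and using $\sum_i k_i^2 a_i^2 + 2 \sum_{i<j} k_i k_j a_i a_j = (\sum_i k_i a_i)^2$, collapses to $\tfrac{1}{2}\bigl[\sum_i k_i a_i^2 + (\sum_i k_i a_i)^2\bigr]$.

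Matching the coefficient of $r$ gives $\mu = \widetilde{k}^{-1}\sum_i k_i a_i$ immediately. Matching the coefficient of $r^2$ and substituting $\sum_i k_i a_i = \widetilde{k}\mu$ yields $\mathcal{E}[X^2] = \bigl[\sum_i k_i a_i^2 + \widetilde{k}^2 \mu^2\bigr]/\bigl[\widetilde{k}(\widetilde{k}+1)\bigr]$. Finally, using the elementary identity $\sum_i k_i(a_i-\mu)^2 = \sum_i k_i a_i^2 - \widetilde{k}\mu^2$, the variance $\mathcal{E}[X^2] - \mu^2$ simplifies to the stated formula.

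The only nontrivial bookkeeping is the identification of the second-order coefficient on the right as a quadratic form in $\mu$, which hinges on the square-of-sum identity noted above. As a sanity check, one can alternatively obtain $\mathcal{E}[T_i] = k_i/\widetilde{k}$ and $\mathcal{E}[T_i T_j] = k_i(k_j + \delta_{ij})/[\widetilde{k}(\widetilde{k}+1)]$ directly from the Dirichlet integral~\eqref{dirinteg}, and then expand $\mathcal{E}[X] = \sum_i a_i \mathcal{E}[T_i]$ and $\mathcal{E}[X^2] = \sum_{i,j} a_i a_j \mathcal{E}[T_i T_j]$ to recover the same expressions.
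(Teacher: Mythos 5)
Your proof is correct and follows exactly the route the paper intends: the corollary is stated as an immediate consequence of Lemma~\ref{mgenF} together with the expansion $\left(1-rX\right)^{-\widetilde{k}}=\sum_{n\geq 0}\frac{\left(\widetilde{k}\right)_{n}}{n!}r^{n}X^{n}$ noted just before it, and matching the coefficients of $r$ and $r^{2}$ as you do is precisely that argument. The algebraic bookkeeping (the square-of-sum identity and the reduction of $\sum_{i}k_{i}\left(a_{i}-\mu\right)^{2}$) checks out.
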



In the case of $N=2$  it is straightforward to find the density 
for \linebreak
$\mathcal{D}\left(  a_{1}%
,a_{2};k_{1},k_{2}\right)  $,%
\begin{equation}
f\left(  x\right)  =\frac{1}{B\left(  k_{1},k_{2}\right)  \left(  a_{2}%
-a_{1}\right)  ^{k_{1}+k_{2}-1}}\left(  a_{2}-x\right)  ^{k_{1}-1}\left(
x-a_{1}\right)  ^{k_{2}-1},
\end{equation}
where $B(a,b)$ denotes the beta function. 

Let us now return to the general case of an arbitrary
dimension $N$ and  consider the behavior of 
$F\left(  x\right)  $ for $x\notin\left\{ a_{1},a_{2},\ldots,a_{N}\right\}  $. 
Here we require no repeated values in
$\left\{  a_{i}\right\}  $. This involves the intersection of the hyperplane
$\sum_{i=1}^{N}a_{i}t_{i}=x$ with $\mathbb{T}_{N-1}$, which is a convex
polytope whose faces are subsets of $\pi_{i}:=\left\{  t:t_{i}=0\right\}  $
for $1\leq i\leq N-1$, $\pi_{N}:=\left\{  t:%
{\textstyle\sum_{i=1}^{N-1}}
t_{i}=1\right\}  $, and \linebreak
$\pi_{x}:=\left\{  t:\sum_{i=1}^{N}a_{i}%
t_{i}=x\right\}  $. Note that $\sum_{i=1}^{N}a_{i}t_{i}=x$ is equivalent to
\linebreak
$\sum_{i=1}^{N-1}\left(  a_{N}-a_{i}\right)  t_{i}=a_{N}-x$. The vertices of
this polytope come from the intersection of $N-2$ hyperplanes drawn from
$\left\{  \pi_{i}:1\leq i\leq N\right\}  $ with $\pi_{x}$. Introduce the unit
basis vectors $\varepsilon_{i}$ ($1\leq i\leq N-1$) with components $\left(
\delta_{ij}\right)  $. There are two types of vertices:%
\begin{align}
\xi_{i}\left(  x\right)    & =%
{\textstyle\bigcap_{j=1,j\neq i}^{N-1}}
\pi_{j}\cap\pi_{x}=\frac{a_{N}-x}{a_{N}-a_{i}}\varepsilon_{i},1\leq i\leq
N-1;\\
\xi_{ij}\left(  x\right)    & =%
{\textstyle\bigcap_{\ell=1,\ell\neq i,j}^{N-1}}
\pi_{\ell}\cap\pi_{N}\cap\pi_{x}=\frac{a_{j}-x}{a_{j}-a_{i}}\varepsilon
_{i}+\frac{x-a_{i}}{a_{j}-a_{i}}\varepsilon_{j},1\leq i<j\leq N-1.
\end{align}
For any given $x$ some of these vertices are in $\mathbb{T}_{N-1}$ and some
are not. Suppose $a_{M}<x<a_{M+1}$ for some $M$ with $1\leq M<N$, then
$\xi_{i}\left(  x\right)  \in\mathbb{T}_{N-1}$ exactly when $1\leq i\leq M$
since the condition is $0<\frac{a_{N}-x}{a_{N}-a_{i}}<1$, that is, $x>a_{i}$.
Similarly $\xi_{ij}\left(  x\right)  \in\mathbb{T}_{N-1}$ exactly when
$a_{i}<x<a_{j}$, that is, $1\leq i\leq M$ and $M+1\leq j\leq N-1$. Thus the
number of vertices is $M\left(  N-M\right)  $. Each vertex is an extreme
point: to show this one exhibits a linear function $c_{0}+\sum_{i=1}%
^{N-1}c_{i}t_{i}$ which vanishes at the point and is positive at all other
vertices. For $\xi_{i}\left(  x\right)  $ the function $\sum_{j\neq i}t_{j}$
accomplishes this, and for $\xi_{ij}\left(  x\right)  $ use $1-t_{i}-t_{j}$
(this applies to the vertices contained in $\mathbb{T}_{N-1}$, by inspection).

\begin{remark}
\label{polyt}Suppose $a_{M}<x_{1}<x_{2}<a_{M+1}$ then $F\left(  x_{2}\right)
-F\left(  x_{1}\right)  $ is given by the integral of $d\mu_{\mathbf{k}}$ over
a convex polytope with $2M\left(  N-M\right)  $ vertices lying between
parallel hyperplanes. The vertices of the polytope are analytic functions of
$x$ and so $F\left(  x_{2}\right)  -F\left(  x_{1}\right)  $ is analytic in
$x_{2}$ and in the parameters $k_{1},k_{2},\ldots,k_{N}$ (in broad terms,
decompose the integral as a sum of iterated $\left(  N-1\right)  $-fold
integrals each of which has an analytic expression).
\end{remark}

It is straightforward to find the following infinite series expression for
the complementary distribution function
 $1-F\left( x\right)$  for $x \in  ( a_{N-1}, a_{N}]$ --
see ~\ref{sec:proofs}~.
We assumed here that $a_{N-1}<a_{N}$, but other repetitions are allowed. 

\begin{proposition}\label{topF}
For $a_{N-1}<x\leq a_{N}$%
\begin{align*}
1-F\left(  x\right)    & =\frac{\Gamma\left(  \widetilde{k}\right)  \left(
a_{N}-x\right)  ^{\widetilde{k}-k_{N}}}{\Gamma\left(  k_{N}\right)
\Gamma\left(  \widetilde{k}-k_{N}\right)  }\prod_{i=1}^{N-1}\left(
a_{N}-a_{i}\right)  ^{-k_{i}}\\
& \times\sum_{\alpha\in\mathbb{N}_{0}^{N-1}}\frac{\left(  1-k_{N}\right)
_{\left\vert \alpha\right\vert }}{\left(  \widetilde{k}-k_{N}\right)
_{\left\vert \alpha\right\vert +1}}\left(  a_{N}-x\right)  ^{\left\vert
\alpha\right\vert }\prod_{i=1}^{N-1}\frac{\left(  k_{i}\right)  _{\alpha_{i}}%
}{\alpha_{i}!\left(  a_{N}-a_{i}\right)  ^{\alpha_{i}}}.
\end{align*}
\end{proposition}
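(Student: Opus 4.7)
The plan is to express $1-F(x)$ as a Dirichlet-type integral over a small region near the vertex $\varepsilon_N=(0,\dots,0,1)$ of the simplex, then make a linear change of variables that turns this region into the standard unit simplex, and finally expand the remaining factor as a binomial/multinomial series that can be integrated term by term.

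First, since $\sum_{i=1}^{N}a_i t_i = x$ is equivalent to $\sum_{i=1}^{N-1}(a_N-a_i)t_i = a_N-x$, the condition $X>x$ translates to $\sum_{i=1}^{N-1}(a_N-a_i)t_i < a_N-x =: y$. For $a_{N-1}<x\leq a_N$ we have $0\leq y < a_N-a_{N-1} \leq a_N-a_i$ for all $i<N$, so I would substitute $s_i := \frac{a_N-a_i}{y}\, t_i$. Under this change, the region becomes the standard simplex $S=\{s\in\mathbb{R}^{N-1}_{\geq 0}:\sum s_i\leq 1\}$, the bound $\sum t_i\leq 1$ is automatically implied (since $t_i = y\,s_i/(a_N-a_i) \leq s_i$), and the Jacobian is $\prod_{i=1}^{N-1}\frac{y}{a_N-a_i}$. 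Collecting the powers of $y$ and $a_N-a_i$ in the transformed density, one obtains
\begin{equation*}
1-F(x) = \frac{\Gamma(\widetilde k)}{\prod_{i=1}^{N}\Gamma(k_i)}\, y^{\widetilde k - k_N}\prod_{i=1}^{N-1}(a_N-a_i)^{-k_i} \int_S \prod_{i=1}^{N-1} s_i^{k_i-1}\Bigl(1 - y\sum_{i=1}^{N-1}\frac{s_i}{a_N-a_i}\Bigr)^{k_N-1} ds.
\end{equation*}

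Next I would expand the last factor. Since $w := y\sum_{i=1}^{N-1} s_i/(a_N-a_i) \leq y/(a_N-a_{N-1}) < 1$ throughout $S$, the generalized binomial series $(1-w)^{k_N-1}=\sum_{m\geq 0}\frac{(1-k_N)_m}{m!}w^m$ converges, and applying the multinomial theorem to $w^m$ gives
\begin{equation*}
\Bigl(1 - w\Bigr)^{k_N-1} = \sum_{\alpha\in\mathbb{N}_0^{N-1}} \frac{(1-k_N)_{|\alpha|}}{\alpha!}\, y^{|\alpha|}\prod_{i=1}^{N-1}\frac{s_i^{\alpha_i}}{(a_N-a_i)^{\alpha_i}}.
\end{equation*}
Interchanging sum and integral (justified by absolute convergence when $k_N<1$ since all coefficients are non-negative, and by dominated convergence when $k_N\geq 1$ using a crude bound on the alternating series), each term is a standard Dirichlet integral $\int_S \prod s_i^{k_i+\alpha_i-1} ds = \frac{\prod \Gamma(k_i+\alpha_i)}{\Gamma(1+\widetilde k - k_N + |\alpha|)}$.

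Finally I would simplify using $\Gamma(k_i+\alpha_i)=\Gamma(k_i)(k_i)_{\alpha_i}$ and $\Gamma(1+\widetilde k-k_N+|\alpha|)=\Gamma(\widetilde k - k_N)(\widetilde k-k_N)_{|\alpha|+1}$. The $\prod_{i=1}^{N-1}\Gamma(k_i)$ factors cancel against the corresponding factors in the prefactor, leaving exactly the claimed formula. The only real obstacle is careful bookkeeping of the Pochhammer and Gamma identities; the analytic content (change of variables plus binomial expansion) is routine and all convergence issues are controlled by the strict inequality $y<a_N-a_{N-1}$.
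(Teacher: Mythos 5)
Your argument is correct and is essentially the paper's own proof: the same linear change of variables $t_i=\frac{a_N-x}{a_N-a_i}s_i$ mapping the sub-simplex onto $\mathbb{T}_{N-1}$, followed by the negative binomial/multinomial expansion of $(1-\sum_i\xi_i'(x)s_i)^{k_N-1}$ and term-by-term Dirichlet integration. Your added remarks on why $\sum t_i\le 1$ is automatic and on justifying the interchange of sum and integral are fine refinements but do not change the route.
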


\begin{corollary}
For $x$ near $a_{N}$ (and $x<a_{N}$) $1-F\left(  x\right)  $ behaves like
$\left(  a_{N}-x\right)  ^{\widetilde{k}-k_{N}}$ and the density $f\left(
x\right)  =\frac{d}{dx}F\left(  x\right)  $ behaves like $\left(
a_{N}-x\right)  ^{\widetilde{k}-k_{N}-1}$.
\end{corollary}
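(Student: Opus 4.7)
The corollary should follow by reading the leading behaviour directly off the explicit series in Proposition~\ref{topF}, so no new machinery is required.

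First I would pull the overall factor $(a_N-x)^{\widetilde{k}-k_N}$ and the $x$-independent constant out of the right-hand side of Proposition~\ref{topF}, leaving an infinite series $S(x)$ indexed by $\alpha\in\mathbb{N}_0^{N-1}$. The $\alpha=0$ contribution collapses immediately: $(1-k_N)_0=1$, the denominator $(\widetilde{k}-k_N)_1$ equals $\widetilde{k}-k_N$, each factor $(k_i)_0/\alpha_i!=1$, and $(a_N-x)^0=1$, so the $\alpha=0$ term equals $1/(\widetilde{k}-k_N)$. Every term with $|\alpha|\ge 1$ carries an additional factor $(a_N-x)^{|\alpha|}$ and therefore contributes strictly higher order in $a_N-x$.

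Next I would verify that $S(x)$ really tends to $1/(\widetilde{k}-k_N)$ as $x\to a_N^-$, not merely formally. Viewing $S$ as a power series in the ratios $(a_N-x)/(a_N-a_i)$, the Pochhammer asymptotics $(z)_n\sim z^n$ quoted after~\eqref{dirinteg} show that the coefficients grow only polynomially in $|\alpha|$, while the ratios $(a_N-x)/(a_N-a_i)$ are uniformly less than $1$ on a one-sided neighbourhood $[a_N-\delta, a_N)$ with $\delta<a_N-a_{N-1}$. Hence the series converges absolutely and uniformly there, the termwise limit is legitimate, and using $(\widetilde{k}-k_N)\Gamma(\widetilde{k}-k_N)=\Gamma(\widetilde{k}-k_N+1)$ one recovers the asserted leading order $(a_N-x)^{\widetilde{k}-k_N}$ for $1-F(x)$.

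For the density I would use $f(x)=-\frac{d}{dx}(1-F(x))$ and differentiate term by term on $[a_N-\delta,a_N)$, which the uniform convergence just established justifies. The dominant contribution comes from differentiating the overall power $(a_N-x)^{\widetilde{k}-k_N}$; the factor $\widetilde{k}-k_N$ produced thereby cancels the denominator of the $\alpha=0$ term, while all remaining contributions acquire an additional factor of $a_N-x$ and are therefore negligible in the limit. The main --- and really only --- obstacle is this exchange of limits and the termwise differentiation, which is handled by the polynomial-vs-geometric bound above; everything else is direct substitution into the formula of Proposition~\ref{topF}.
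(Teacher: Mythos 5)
Your argument is correct and is exactly the route the paper intends: the corollary is stated as an immediate consequence of Proposition~\ref{topF}, obtained by isolating the $\alpha=0$ term of the series (which equals $1/(\widetilde{k}-k_N)$), noting that all other terms carry extra powers of $a_N-x$, and differentiating the resulting convergent power series in $a_N-x$ term by term. Your convergence and termwise-differentiation justifications are sound and fill in the details the paper leaves implicit.
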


The Dirichlet distribution has a special additivity property which allows us
to restrict to the situation where the $a_{i}$'s are mutually distinct. If two
numbers $a_{i}$'s are equal, say $a_{N-1}=a_{N}$ then $\sum_{i=1}^{N}a_{i}%
t_{i}$ is has the same distribution as $\mathcal{D}\left(  a_{1}%
,\ldots,a_{N-1};k_{1},\ldots,k_{N-1}+k_{N}\right)  $ (see \ref{DakN}). In
other words if $a_{\ell}=a_{\ell+1}=\ldots=a_{\ell+m-1}$ then the distribution
is the same as%
\begin{equation}
\mathcal{D}\left(  a_{1},\ldots,a_{\ell},a_{\ell+m},\ldots,a_{N};k_{1}%
,\ldots,\sum_{i=\ell}^{\ell+m-1}k_{i},k_{\ell+m},\ldots,k_{N}\right)  .
\end{equation}
When each $k_{i}$ is an integer ($\geq1)$ there is a finite sum expression for
the density in terms of piecewise polynomials (splines). This theorem is from
\cite[p.2070]{Shd1}. For simplicity we state the result for the case $0\leq
a_{1}<a_{2}<\ldots<a_{N}$. Let $x_{+}:=\max\left(  0,x\right)  $, with the
convention that $x_{+}^{0}=1$ for $x\geq0$ and $=0$ for $x<0$.

\begin{theorem}
\label{cxshad}Suppose $0\leq a_{1}<a_{2}<\ldots<a_{N}$, $k_{i}\in\mathbb{N}$
for each $i$, then
\begin{equation}
f\left(  x\right)  =\sum_{i=1}^{N}\sum_{j=1}^{k_{i}}\frac{\beta_{ij}}%
{a_{i}B\left(  j,\widetilde{k}-j\right)  }\left(  \frac{x}{a_{i}}\right)
_{+}^{j-1}\left(  1-\frac{x}{a_{i}}\right)  _{+}^{\widetilde{k}-j-1},
\end{equation}
where
\begin{equation}
\prod_{i=1}^{N}\left(  1-ra_{i}\right)  ^{-k_{i}}=\sum_{i=1}^{N}\sum
_{j=1}^{k_{i}}\frac{\beta_{ij}}{\left(  1-ra_{i}\right)  ^{j}}\label{parfrac1}%
\end{equation}
is the partial fraction decomposition (the term with $i=1$ is omitted if
$a_{1}=0$).
\end{theorem}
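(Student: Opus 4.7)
The plan is to identify the candidate $f(x)$ as the true density of $X$ by showing that it reproduces the moment generating function of $X$ supplied by Lemma~\ref{lemma:exp-value}. Starting from $\mathcal{E}[(1-rX)^{-\tilde k}] = \prod_{i=1}^{N}(1-ra_i)^{-k_i}$ and applying the partial fraction decomposition \eqref{parfrac1}, the task reduces to proving that each pole term $\beta_{ij}(1-ra_i)^{-j}$ equals $\int \beta_{ij}\,g_{ij}(x)(1-rx)^{-\tilde k}\,dx$, where $g_{ij}$ is the scaled Beta factor appearing in the stated formula.

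The key step is the classical Euler integral representation
$${}_2F_1(\tilde k, j; \tilde k; z) = \frac{1}{B(j,\tilde k - j)}\int_0^1 u^{j-1}(1-u)^{\tilde k - j - 1}(1-zu)^{-\tilde k}\,du,$$
combined with the elementary collapse ${}_2F_1(\tilde k, j; \tilde k; z) = (1-z)^{-j}$ (valid because with $c=a$ the hypergeometric series reduces to the binomial series $\sum_n (j)_n z^n/n!$). Setting $z=ra_i$ and substituting $u=x/a_i$ yields
$$(1-ra_i)^{-j} = \int_0^{a_i} \frac{1}{a_i B(j,\tilde k - j)}\left(\frac{x}{a_i}\right)^{j-1}\!\!\left(1-\frac{x}{a_i}\right)^{\tilde k - j - 1}\!(1-rx)^{-\tilde k}\,dx.$$
The truncated positive-part notation in the theorem simply encodes the support of each Beta piece on $[0,a_i]$; multiplying by $\beta_{ij}$, summing over $i,j$, and recognising $f$ from the statement gives $\mathcal{E}[(1-rX)^{-\tilde k}] = \int f(x)(1-rx)^{-\tilde k}\,dx$. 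The case $a_1=0$ is automatic: the factor $(1-ra_1)^{-k_1}=1$ contributes no poles to the partial fraction, so no $i=1$ term arises, matching the parenthetical remark.

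The proof concludes by uniqueness. Expanding $(1-rx)^{-\tilde k}=\sum_n (\tilde k)_n r^n x^n/n!$ for small $|r|$ turns the identity of generating functions into equality of all moments $\int x^n f(x)\,dx = \mathcal{E}[X^n]$. Since $X$ is supported on the bounded interval $[a_1,a_N]$, its distribution is uniquely determined by its moment sequence, so $f$ must be the density of $X$. The main subtlety is the hypergeometric collapse ${}_2F_1(\tilde k, j; \tilde k; z) = (1-z)^{-j}$, which, while standard, must be invoked explicitly; the remaining manipulations are essentially bookkeeping with linearity and the substitution $u=x/a_i$.
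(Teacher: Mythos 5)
Your proof is correct. A small point of context first: the paper does not actually prove Theorem~\ref{cxshad} --- it imports it from the cited source --- so there is no in-paper proof to match against; the closest analogue is the paper's proof of Theorem~\ref{th:main-theorem}, and your argument is exactly that strategy transplanted to the integer-parameter case. Both proofs verify that the candidate density reproduces the generating function $\mathcal{E}\bigl[(1-rX)^{-\widetilde{k}}\bigr]=\prod_i(1-ra_i)^{-k_i}$ of Lemma~\ref{mgenF} and then invoke moment determinacy on a compact interval. The difference is in how the integral $\int f(x)(1-rx)^{-\widetilde{k}}\,dx$ is evaluated: in the half-integer case the paper must resort to contour integration around branch cuts, whereas in your setting the partial fraction decomposition \eqref{parfrac1} reduces everything to the single identity $(1-ra_i)^{-j}=\frac{1}{B(j,\widetilde{k}-j)}\int_0^1u^{j-1}(1-u)^{\widetilde{k}-j-1}(1-ra_iu)^{-\widetilde{k}}\,du$, which is the Euler integral for ${}_2F_1(\widetilde{k},j;\widetilde{k};\cdot)=(1-\cdot)^{-j}$ (or, equivalently, a term-by-term Beta integral using $B(n+j,\widetilde{k}-j)/B(j,\widetilde{k}-j)=(j)_n/(\widetilde{k})_n$). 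The hypotheses of the Euler integral are satisfied since $\widetilde{k}-j\geq\sum_{\ell\neq i}k_\ell\geq 1$ for $N\geq 2$, the exchange of sum and integral is justified by uniform convergence on $[0,a_N]$ for $|r|<1/a_N$, and the $a_1=0$ case is handled correctly (no pole, hence no $i=1$ term, while $\widetilde{k}$ still includes $k_1$). Note also that determinacy identifies $f(x)\,dx$ with the law of $X$ as measures, which retroactively gives $f\geq 0$ and $\int f=\sum_{i,j}\beta_{ij}=1$, so you need not check positivity separately. This is a clean, complete argument.
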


Observe that each term $\frac{1}{a_{i}B\left(  j,\widetilde{k}-j\right)
}\left(  \frac{x}{a_{i}}\right)  _{+}^{j-1}\left(  1-\frac{x}{a_{i}}\right)
_{+}^{\widetilde{k}-j-1}$ is itself a probability density supported on $0\leq
x\leq a_{i}$. (In the present context $N$ is the number of distinct values,
differing from the statement in \cite{Shd1} where each $k_{i}=1$ and some
values are repeated.) The Theorem shows that the density is a piecewise
polynomial of degree $\widetilde{k}-2$ with discontinuities (in some order
derivative) at the points $\left\{  a_{i}\right\}  $. Because of this spline
interpretation the quantities $a_{i}$ will henceforth be called \textit{knots}.

\section{B--splines and their generalization}\label{sec:splines}

The Dirichlet distribution is closely related to the notion of an
$s$--dimensional $B$--spline introduced by  de Boor \cite{dB1976}.

\begin{definition} Let $\sigma$ be a non-trivial simplex in $\mathbb{R}^{s+k}$. On $\mathbb{R}^s$ we
define the B--spline of order $k$ from $\sigma$ by
\begin{equation}
\mathcal{M}_{k,\sigma}(x_1,\dots,x_s)=\mathrm{vol}(\sigma\cap\{v\in\mathbb{R}^{s+k}:v_j=x_j\,\,(j=1,2,\dots,s)\}).
\end{equation}
\end{definition}
A measure version of the above definition is more useful, thus we define 
the normalized measure on $\mathbb{R}^s$
\begin{equation}
\mathcal{M}_{k,\sigma}(B)= \mathrm{vol}(\sigma\cap\{v\in\mathbb{R}^{s+k}: \{v_j\}_{j=1}^s \in B\}) / \mathrm{vol}(\sigma).
\end{equation}
A non-trivial simplex $\sigma \in \mathbb{R}^{s+k}$ can be written as $W \mathbb{T}_{s+k}$ where $\mathbb{T}_{s+k}$
is a regular simplex and $W$ is an invertible matrix of order $s+k$. The 
simplex is possibly translated if 0 is not a vertex of $\sigma$. We will use 
the notation 

\begin{equation}
\mathcal{M}_{k,W}(B)= \mathrm{vol}(y \in \mathbb{T}_{s+k}: W y \in B\oplus \mathbb{R}^{k}) / \mathrm{vol}(\mathbb{T}_{s+k}).
\end{equation}

Instead of calculating the volume with respect to the flat Lebesgue measure
one can use instead the Dirichlet measure $\mu_{\mathbf{k}}$
with parameters $\mathbf{k}$ instead.
In this way one obtains a generalized notion of $B$-splines.
\begin{equation}
\begin{split}
\mathcal{M}_{k,W}^{(\mathbf{k})}(B)
&=  \mu_{(\mathbf{k})}(y \in \mathbb{T}_{s+k}: W y \in B\oplus \mathbb{R}^{k}) / \mu_{(\mathbf{k})}(\mathbb{T}_{s+k}).
\end{split}
\end{equation}

Therefore, the distribution $\mathcal{D}$ can be viewed as a generalized $B$-spline.
If we take any $N \times N$ invertible matrix $W$ with the first row given by
$\lambda_1\dots\lambda_N$, then a generalized $B$-spline
 is equal to the distribution $\mathcal{D}$ 
\begin{equation}
\mathcal{M}_{N-1,W}^{(\mathbf{k})} = \mathcal{D}(\lambda_1,\dots\lambda_N; \mathbf{k}).
\end{equation}

\section{Shadows of Hermitian and real symmetric matrices}\label{sec:symmetric}

Among several probability measures defined on the set of density matrices
it is convenient to distinguish a class of measures induced by the partial trace
performed on a pure state on the extended system.

We say, that a density matrix  $\rho$ of size $N$
is distributed according to the induced measure
$\mu^{\mathrm{tr}}_{N,K}$~\cite{bengtsson2006geometry} if
\begin{equation}
\rho = \tr_2 \ketbra{\psi}{\psi}, 
\end{equation}
where $\ket{\psi}$ being a uniformly distributed, normalized random vector
in $\mathcal{H}_1 \otimes \mathcal{H}_2 = \mathbb{C}^{N} \otimes
\mathbb{C}^{K}$ and the operation of partial trace  is defined for product 
matrices as $\tr_2 A \otimes B  = A \tr B$ and extended to general case by linearity.
In the case of $K=1$ we obtain a measure on pure states and in the case of $K=N$
we get a Hilbert-Schmidt measure~\cite{bengtsson2006geometry}.

\begin{figure}[tbph]
\centering
\includegraphics[width=1\linewidth]{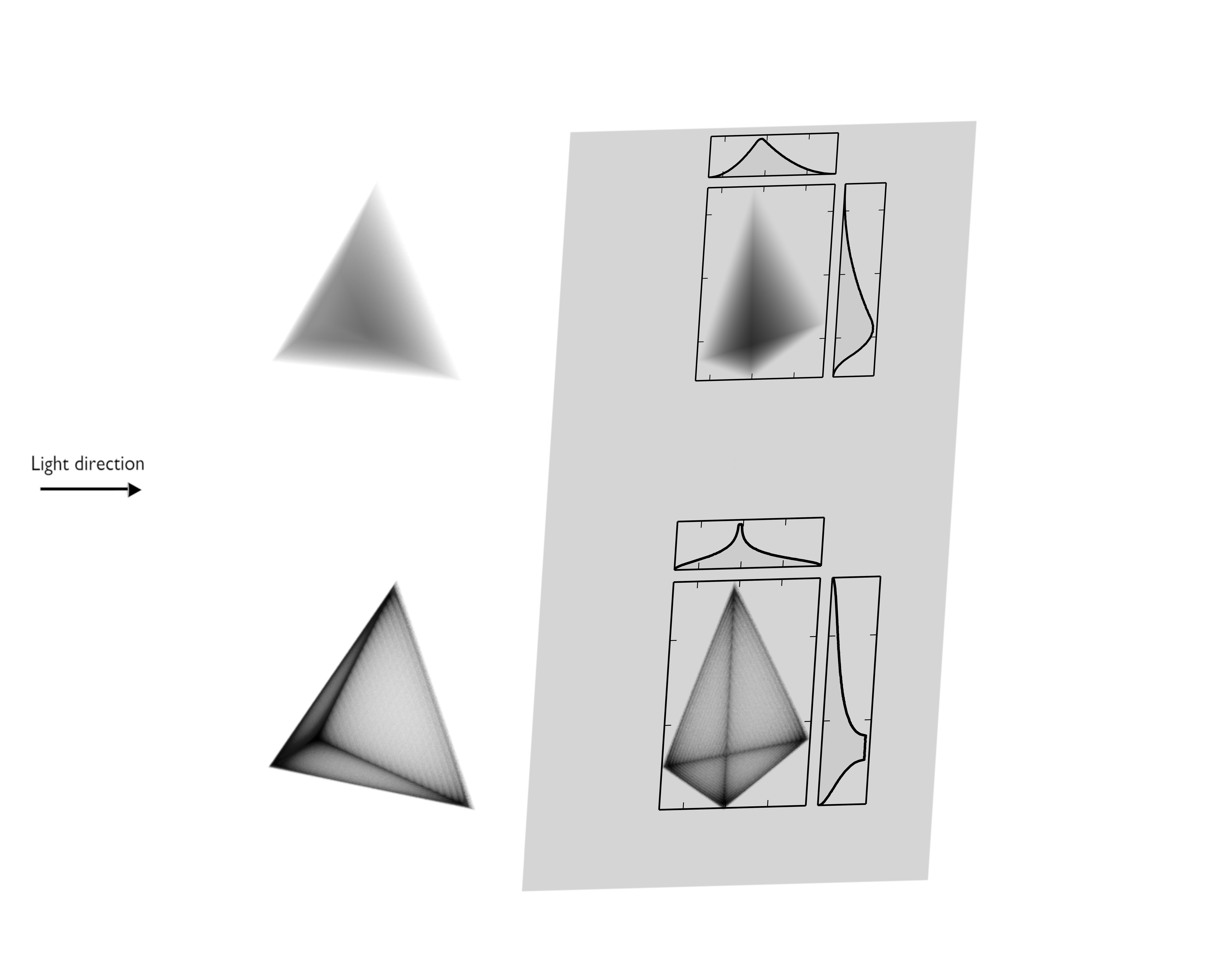}
\caption{Numerical shadows of an illustrative normal
matrix $A$ of order $N=4$.  Upper figure represents the complex
shadow, while the lower one the real shadow
The tetrahedrons on the left are covered with respect to the
uniform and the Dirichlet distribution, respectively. 
If a parallel beam of light is shined upon
them they cast shadows on a plane which coincide with complex and 
real numerical shadows of $A$. Both marginal distributions above and on the right hand side of each shadow, 
correspond to complex/real numerical shadows of Hermitian matrices formed 
from real and imaginary parts of $A$.}
\label{fig:tetrahedrons}
\end{figure}

In paper~\cite{Shd1} we showed that the (complex) shadow of 
a Hermitian matrix $A$ with
eigenvalues $(\lambda_{1},\ldots,\lambda_{N})$ (counted with multiplicity) has
the distribution
\begin{equation}
\mathcal{P}_{A} =  \mathcal{D}\left( 
\lambda_{1},\ldots,\lambda_{N};1,\ldots,1\right).
\label{compl}
\end{equation}
From Corollary
\ref{mean&var} the mean is $\mu=\frac{1}{N}\sum_{j=1}^{N}\lambda_{j}=\frac
{1}{N}\mathrm{Tr}A$ and the variance is $\frac{1}{N\left(  N+1\right)  }%
\sum_{j=1}^{N}\left(  \lambda_{j}-\mu\right)  ^{2}$.

In analogy to the standard shadow (\ref{compl}) one can introduce the
{\sl mixed states shadow} \cite{shadow1}. 
For a Hermitian matrix $A$ the mixed shadow induced by a distribution
$\mu^{\mathrm{tr}}_{N,K}$ reads
\begin{equation}
\mathcal{P}^K_{A} =  
\mathcal{D}(\lambda_1,\dots,\lambda_N; K, \dots, K).
\label{complmixed}
\end{equation}
This follows directly from the definition of a partial trace and the additivity
property of a Dirichlet distribution. As a special case we obtain, that the
mixed numerical shadow with respect to flat Hilbert Schmidt distribution is
given by 
$\mathcal{P}^N_{A} =\mathcal{D}(\lambda_1,\dots,\lambda_N;N, \dots, N)$. 
We can calculate
mean and variance for mixed numerical shadow induced by
$\mu^{\mathrm{tr}}_{N,K}$, using Corollary \ref{mean&var} we have
$\mu=\frac{1}{N}\sum_{j=1}^{N}\lambda_{j}=\frac{1}{N}\mathrm{Tr}A$
and the variance is $\frac{1}{N\left(  N K + 1\right)  } \sum_{j=1}^{N}
\left( \lambda_{j}-\mu\right)  ^{2}$.

Let us now return to the main subject of the paper - the shadow
$\mathcal{P}_A^{\mathbb{R}}$ of a matrix $A$ of order $N$
with respect to the set of real pure states in $\mathbb{R}^{N}$.
It is briefly called the {\sl real shadow} \cite{shadow3},
and for a real symmetric matrix $A$ it 
can be related to the Dirichlet distribution,

\begin{equation}
\mathcal{P}_A^{\mathbb{R}} =  
\mathcal{D}
\left(  \lambda_{1},\ldots,\lambda_{N};\frac{1}{2},\ldots,\frac{1}{2}\right)
\label{realshad}
\end{equation}
where  $\lambda_{1},\ldots,\lambda_{N}$ denotes the
eigenvalues  of $A$ counted with multiplicity.
The mean value is $\mu=\frac{1}{N}\sum_{j=1}^{N}%
\lambda_{j}$ and the variance is $\frac{2}{N\left(  N+2\right)  }\sum
_{j=1}^{N}\left(  \lambda_{j}-\mu\right)  ^{2}$. 

In a close analogy to the complex case, one can also consider the shadow 
with respect to real mixed states obtained by an induced measure
 $\mu_{N,K}^{\rm tr}$. For any real 
symmetric matrix  $A$ this leads to the distribution $\mathcal{D}$,
with all indices equal to $K/2$. Thus the real shadow is obtained for $K=1$,
as required.

Henceforth we will concentrate on the distributions $\mathcal{D}\left(
a_{1},\ldots,a_{N};k,\ldots,k\right)  $ with pairwise distinct knots $a_{i}$.
For integer $k$ we have the interpretation as the shadow of the $Nk\times Nk$
Hermitian matrix $A\oplus\ldots\oplus A$ ($k$ summands) where the eigenvalues
of $A$ are $a_{1},\ldots,a_{N}$, 
 or the mixed numerical shadow induced by the measure $\mu^{\mathrm{tr}}_{N,k}$.
 We consider the distribution
as an analytic function of $k$, for $\operatorname{Re}k>0$, and will find more
information by extrapolating from the known formulas for integer $k$.
Start with
finding explicit values of the coefficients $\left\{  \beta_{ij}\right\}  $ in
Theorem~\ref{cxshad}.

\begin{lemma}\label{kparfrac}
Suppose $\left\{  a_{1},a_{2},\ldots,a_{N}\right\}  $ consists
of pairwise distinct nonzero real numbers and $k=1,2,3,\ldots$ then%
\begin{equation}
\prod_{i=1}^{N}\left(  1-ra_{i}\right)  ^{-k}=\sum_{i=1}^{N}\sum_{m=0}%
^{k-1}\frac{\left(  -1\right)  ^{m}a_{i}^{\left(  N-1\right)  k}}{\left(
1-ra_{i}\right)  ^{k-m}}\sum_{\alpha\in\mathbb{N}_{0}^{N},\left\vert
\alpha\right\vert =m,\alpha_{i}=0}\frac{1}{\alpha!}\prod_{j=1,j\neq i}%
^{N}\frac{\left(  k\right)  _{\alpha_{j}}a_{j}^{\alpha_{j}}}{\left(
a_{i}-a_{j}\right)  ^{k+\alpha_{j}}}%
\end{equation}
\end{lemma}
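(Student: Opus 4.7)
The claim is a partial fraction decomposition, so the natural strategy is to recognise the rational function $R(r)=\prod_{i=1}^N(1-ra_i)^{-k}$ as having poles of order exactly $k$ at each $r=1/a_i$ (distinct, since the $a_i$ are pairwise distinct and nonzero) and observe that $R(r)\to 0$ as $r\to\infty$ (at rate $r^{-Nk}$). Consequently $R(r)$ equals the sum of its principal parts at these $N$ poles, and the lemma amounts to computing the principal part at each pole explicitly. So my plan is: fix $i$, compute the Laurent expansion of $R$ at $r=1/a_i$, read off the coefficients of $(1-ra_i)^{-(k-m)}$ for $m=0,1,\dots,k-1$, and then invoke the above uniqueness to assemble the formula.

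To execute this, I would introduce the local variable $s:=1-ra_i$, so that $(1-ra_i)^{-k}=s^{-k}$, and rewrite the remaining factors using
\begin{equation*}
1-ra_j \;=\; \frac{a_i-a_j}{a_i}\Bigl(1+\frac{a_j\,s}{a_i-a_j}\Bigr),\qquad j\neq i.
\end{equation*}
Raising to $-k$ and multiplying over $j\neq i$ produces the prefactor
$a_i^{(N-1)k}\prod_{j\neq i}(a_i-a_j)^{-k}$ times a product of $N-1$ binomial series $\bigl(1+\tfrac{a_j s}{a_i-a_j}\bigr)^{-k}=\sum_{n\ge0}\frac{(-1)^n(k)_n}{n!}\bigl(\tfrac{a_j s}{a_i-a_j}\bigr)^n$.

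Expanding this product as a sum over multi-indices $\alpha\in\mathbb{N}_0^N$ with $\alpha_i=0$ gives
\begin{equation*}
R(r)\;=\;s^{-k}\sum_{\alpha:\alpha_i=0}\frac{(-1)^{|\alpha|}a_i^{(N-1)k}}{\alpha!}\Bigl(\prod_{j\neq i}\frac{(k)_{\alpha_j}a_j^{\alpha_j}}{(a_i-a_j)^{k+\alpha_j}}\Bigr)s^{|\alpha|},
\end{equation*}
valid in a punctured neighbourhood of $s=0$. The terms with $|\alpha|=m$ and $0\le m\le k-1$ contribute $s^{m-k}=(1-ra_i)^{-(k-m)}$, matching exactly the $i$-th inner summand of the claimed formula; the terms with $|\alpha|\ge k$ are analytic at $s=0$ and contribute to what is, globally, the analytic remainder.

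Summing the principal parts over all $i$ gives a rational function whose difference with $R$ has no poles and still vanishes at infinity, hence is identically zero; this is the only step where one must be slightly careful, and the vanishing at infinity is immediate from $R(r)=O(r^{-Nk})$ versus each principal part being $O(r^{-(k-m)})$ as $r\to\infty$ with $k-m\ge 1$. The main obstacle is purely bookkeeping: keeping track of signs, of the exponent $(N-1)k$ arising from the $a_i$ prefactors, and of the combined powers $(a_i-a_j)^{k+\alpha_j}$; once the substitution $s=1-ra_i$ is set up, the rest is an essentially automatic expansion.
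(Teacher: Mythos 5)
Your proposal is correct and follows essentially the same route as the paper: both arguments determine the partial-fraction coefficients of $\prod_{i=1}^{N}(1-ra_i)^{-k}$ by local analysis at each pole $r=1/a_i$. The only difference is mechanical --- you extract the Taylor coefficients of $\prod_{j\neq i}(1-ra_j)^{-k}$ via the substitution $s=1-ra_i$ and the negative binomial series, whereas the paper (following Henrici) obtains the same coefficients by applying $\left(\frac{d}{dr}\right)^{m}$ with the generalized product rule and evaluating at $r=1/a_i$; your explicit ``sum of principal parts plus vanishing at infinity'' justification is a nice touch, since the paper instead takes the existence of the decomposition (\ref{parfrac1}) as given.
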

The proof is provided in~\ref{sec:proofs}.

Thus the formula in Theorem~\ref{cxshad} is completely symmetric in $\left(
a_{1},a_{2},\ldots,a_{N}\right) $, independent of the ordering. This is an
ingredient in the derivation of the differential equation satisfied by the
density. 

Consider the case of a symmetric matrix of size $N=3$.
 Then the density for its real shadow has an expression in
terms of a $_{2}F_{1}$-hypergeometric function which solves a certain
second-order differential equation. Suppose $a_{1}=0$, so formulas
(\ref{parfrac1}) and (\ref{kparfrac}) (change $j$ to $k-m$) 
read for $x\in (a_{2}, x\leq a_{3}]$
\begin{equation}
f\left(  x\right)  =\frac{x^{k-1}\left(  a_{3}-x\right)  ^{2k-1}}{B\left(
k,2k\right)  a_{3}^{2k-1}\left(  a_{3}-a_{2}\right)  ^{k}}~_{2}F_{1}\left(
\genfrac{}{}{0pt}{}{k,1-k}{2k}%
;\frac{a_{2}\left(  a_{3}-x\right)  }{x\left(  a_{3}-a_{2}\right)  }\right)
;\label{N3&2F1}%
\end{equation}
and the series converges for any $k>0$.

Let us now return to the generalized case of an arbitrary matrix order $N$, for
which condition $a_{1}<a_{2}<\ldots<a_{N}$ holds.
Basing on computational experiments we are
in position to formulate a generalization 
valid for small $N$ and integers $k$.
Set $P_{N}\left(  x\right)  =\prod_{i=1}^{N}\left(  x-a_{i}\right)  $ and
define a differential operator $\mathcal{T}_{k}$ of order $N-1$ (with
$\partial:=\frac{d}{dx}$) by%
\begin{equation}
\mathcal{T}_{k}:=P_{N}\left(  x\right)  \partial^{N-1}+\sum_{j=1}^{N-1}\left(
-1\right)  ^{j}\frac{N-j}{N}\frac{\left(  N\left(  k-1\right)  \right)  _{j}%
}{j!}\partial^{j}P_{N}\left(  x\right)  \partial^{N-1-j}.
\end{equation}
The differential equation $\mathcal{T}_{k}f\left(  x\right)  =0$ has regular
singular points at the knots. We will show that the density function of
$\mathcal{D}\left(  a_{1},\ldots,a_{N};k,\ldots,k\right)  $ satisfies this
equation at all $x\notin\left\{  a_{1},\ldots,a_{N}\right\}  $, first for
integer $k$ then for $k>0$. The idea is to verify the equation for the
interval $\left(  a_{N-1},a_{N}\right)  $ by use of Proposition \ref{topF} and
then use the symmetry property of Theorem~\ref{cxshad} to extend the result to
all intervals $\left(  a_{i},a_{i+1}\right)  $.

\begin{lemma}
For arbitrary $a,b,c$ and $n=1,2,\ldots$%
\begin{equation}
\sum_{j=0}^{n}\frac{a+j}{a}\frac{\left(  -n\right)  _{j}\left(  b\right)
_{j}}{\left(  c\right)  _{j}~j!}=\frac{\left(  c-b\right)  _{n-1}}{a~\left(
c\right)  _{n}}\left(  a\left(  c-b+n-1\right)  -nb\right)  .
\end{equation}

\end{lemma}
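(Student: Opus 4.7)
The plan is to recognize the sum as a combination of two terminating Gauss hypergeometric series at argument $1$, each of which is summable in closed form by the Chu–Vandermonde identity
$$
{}_2F_1(-n,b;c;1)=\sum_{j=0}^{n}\frac{(-n)_j(b)_j}{(c)_j\,j!}=\frac{(c-b)_n}{(c)_n}.
$$

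First I would split the prefactor linearly, writing $\frac{a+j}{a}=1+\frac{j}{a}$, so that the left-hand side becomes
$$
\sum_{j=0}^{n}\frac{(-n)_j(b)_j}{(c)_j\,j!}+\frac{1}{a}\sum_{j=1}^{n}\frac{j\,(-n)_j(b)_j}{(c)_j\,j!}.
$$
The first sum is immediately $(c-b)_n/(c)_n$ by Chu–Vandermonde. For the second, I would reindex with $i=j-1$ and shift each Pochhammer by one using $(-n)_j=-n(1-n)_{j-1}$, $(b)_j=b(b+1)_{j-1}$, $(c)_j=c(c+1)_{j-1}$, and $j/j!=1/(j-1)!$. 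This converts the sum into $-\frac{nb}{c}\,{}_2F_1(1-n,b+1;c+1;1)$, which is again summed by Chu–Vandermonde to $-\frac{nb}{c}\cdot\frac{(c-b)_{n-1}}{(c+1)_{n-1}}$.

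Finally I would combine the two pieces over the common denominator $a(c)_n$, using the two Pochhammer identities $(c)_n=c\,(c+1)_{n-1}$ and $(c-b)_n=(c-b+n-1)(c-b)_{n-1}$ to factor out $(c-b)_{n-1}/[a(c)_n]$, leaving exactly $a(c-b+n-1)-nb$ inside the brackets. That matches the right-hand side.

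The calculation is essentially bookkeeping; the only mild subtlety is the Pochhammer shift in the second sum and remembering that the $j=0$ term drops out there, so both hypergeometric series end up terminating cleanly. No genuine obstacle is expected — the identity is a one-parameter modification of Chu–Vandermonde and reduces to two applications of it.
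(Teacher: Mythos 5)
Your proof is correct and follows exactly the paper's own argument: split $\frac{a+j}{a}=1+\frac{j}{a}$, apply Chu--Vandermonde to the resulting ${}_2F_1(-n,b;c;1)$ and the shifted ${}_2F_1(1-n,b+1;c+1;1)$, and recombine. No differences worth noting.
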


\medskip 
\begin{proof}
Expand the sum as
\begin{align}
\sum_{j=0}^{n}\frac{\left(  -n\right)  _{j}\left(  b\right)  _{j}}{\left(
c\right)  _{j}~j!}+\frac{1}{a}\sum_{j=1}^{n}\frac{j\left(  -n\right)
_{j}\left(  b\right)  _{j}}{\left(  c\right)  _{j}~j!} &  =\sum_{j=0}^{n}%
\frac{\left(  -n\right)  _{j}\left(  b\right)  _{j}}{\left(  c\right)
_{j}~j!}-\frac{nb}{ac}\sum_{i=0}^{n-1}\frac{\left(  1-n\right)  _{i}\left(
b+1\right)  _{i}}{\left(  c+1\right)  _{i}~i!}\\ \nonumber
&  =\frac{\left(  c-b\right)  _{n}}{\left(  c\right)  _{n}}-\frac{nb}{ac}%
\frac{\left(  c-b\right)  _{n-1}}{\left(  c+1\right)  _{n-1}},
\end{align}
by the Chu-Vandermonde sum.
\end{proof}
\medskip 

Since we intend to work with polynomials in $x-a_{N}$ we set $y:=x-a_{N}$.
Start the verification by replacing $P_{N}\left(  x\right)  $ by $y^{n}$ and
apply the resulting operator to $\left(  -y\right)  ^{c}$ (for $0\leq n\leq
N-1$ and generic $c$ (leaving open the possibility of $c$ being a noninteger
and $y<0)$. At times we use the Pochhammer symbol with a negative index: for
$m=1,2,3,\ldots$ let $\left(  c\right)  _{-m}=1/\left(  c-m\right)  _{m}$, 
so that $\left(  c\right)  _{-m}\left(  c-m\right)  _{m}=\left(  c\right)
_{0}=1$. Note that $\partial^{j}\left(  -y\right)  ^{c}=\left(
-c\right)  _{j}\left(  -y\right)  ^{c-j}$, so the result follows
\begin{align}
\nonumber
&  \sum_{j=0}^{N-1}\left(  -1\right)  ^{j}\frac{N-j}{N}\frac{\left(  N\left(
k-1\right)  \right)  _{j}}{j!}\left(  -1\right)  ^{j}\left(  -n\right)
_{j}y^{n-j}\left(  -c\right)  _{N-1-j}\left(  -y\right)  ^{c-N+1+j}\\ 
&  =\left(  -1\right)  ^{n}\left(  -y\right)  ^{c+n-N+1}\left(  -c\right)
_{N-1}\sum_{j=0}^{n}\frac{j-N}{-N}\frac{\left(  -n\right)  _{j}\left(
N\left(  k-1\right)  \right)  _{j}}{j!\left(  c+2-N\right)  _{j}}\\
\nonumber
&  =\left(  -1\right)  ^{n}\left(  -y\right)  ^{c+n-N+1}\frac{\left(
-c\right)  _{N-1}}{\left(  c+2-N\right)  _{n}}\left(  c+2-Nk\right)
_{n-1}\left(  c+1-\left(  N-n\right)  k\right)  \\
\nonumber
&  =\left(  -y\right)  ^{c+n-N+1}\left(  -c\right)  _{N-1-n}\left(
c+2-Nk\right)  _{n-1}\left(  c+1-\left(  N-n\right)  k\right)  .
\end{align}
In the special case $n=N$ we obtain $-\left(  -y\right)  ^{c+1}\left(
c+2-Nk\right)  _{N-1}$ since $\left(  -c\right)  _{-1}=-1/\left(  1+c\right)
$. For $n=0$ the result is zero. The calculations used the reversal $\left(
a\right)  _{m-j}=\left(  -1\right)  ^{j}\dfrac{\left(  a\right)  _{m}}{\left(
1-m-a\right)  _{j}}$ and the Lemma with $a,b,c$ replaced by $-N,Nk-N$ and $c+2-N$,
respectively.
The upper limit of summation is $n$ because $n\leq N$. 
To proceed further we introduce:
\begin{align}
A\left(  N,k,n,c\right)   &  :=\left(  -c\right)  _{N-1-n}\left(
c+2-Nk\right)  _{n-1}\left(  c+1-\left(  N-n\right)  k\right)  ,1\leq
n<N;\label{defAN}\\
A\left(  N,k,N,c\right)   &  :=-\left(  c+2-Nk\right)  _{N-1}.\nonumber
\end{align}
Next $P_{N}\left(  x\right)  =y\prod_{i=1}^{N-1}\left(  y-\left(  a_{i}%
-a_{N}\right)  \right)  =\sum_{j=0}^{N-1}\left(  -1\right)  ^{N-1-j}%
e_{N-1-j}y^{j+1}$ where $e_{m}$ denotes the elementary symmetric polynomial of
degree $m$ in
\linebreak
$\left\{  a_{1}-a_{N},\ldots,a_{N-1}-a_{N}\right\}  $, $0\leq
m\leq N-1$. Thus
\begin{equation}
\mathcal{T}_{k}\left(  \left(  -y\right)  ^{c}\right)  =\sum_{j=1}^{N}\left(
-1\right)  ^{N-j}e_{N-j}A\left(  N,k,j,c\right)  \left(  -y\right)
^{c-N+j+1}.\label{Tkxc}%
\end{equation}
Up to a multiplicative constant, not relevant in this homogeneous equation,
the density in $a_{N-1}<x<a_{N}$ is given by
\begin{equation}
f_{0}\left(  x\right)  =-\partial\sum_{\alpha\in\mathbb{N}_{0}^{N-1}}%
\frac{\left(  1-k\right)  _{\left\vert \alpha\right\vert }}{\left(  \left(
N-1\right)  k\right)  _{\left\vert \alpha\right\vert +1}}\left(  -y\right)
^{\left\vert \alpha\right\vert +\left(  N-1\right)  k}\prod_{i=1}^{N-1}%
\frac{\left(  k\right)  _{\alpha_{i}}}{\alpha_{i}!\left(  a_{N}-a_{i}\right)
^{\alpha_{i}}}.
\end{equation}
The series terminates at $\left\vert \alpha\right\vert =k-1$. Define symmetric
polynomials $S_{m}\left(  k;a\right)  $ in $\left\{  a_{1}-a_{N}%
,\cdots,a_{N-1}-a_{N}\right\}  $ (note the reversal to $a_{i}-a_{N}$) by
\begin{equation}
\sum_{m=0}^{\infty}S_{m}\left(  k;a\right)  r^{m}=\prod_{i=1}^{N-1}\left(
1-\frac{r}{a_{i}-a_{N}}\right)  ^{-k},
\end{equation}
convergent for $\left\vert r\right\vert <a_{N}-a_{N-1}$, then%
\begin{equation}
f_{0}\left(  x\right)  =\sum_{m=0}^{k-1}\frac{\left(  1-k\right)  _{m}%
}{\left(  \left(  N-1\right)  k\right)  _{m}}\left(  -y\right)  ^{\left(
N-1\right)  k+m-1}\left(  -1\right)  ^{m}S_{m}\left(  k;a\right)  ,
\end{equation}
and%
\begin{align}
\nonumber
\mathcal{T}_{k}f_{0}\left(  x\right)    & =\sum_{m=0}^{k-1}\sum_{j=1}%
^{N}\left(  -1\right)  ^{N-j}e_{N-j}\frac{\left(  1-k\right)  _{m}}{\left(
\left(  N-1\right)  k\right)  _{m}}S_{m}\left(  k;a\right)  \left(  -1\right)
^{m}\\
& \times A\left(  N,k,j,\left(  N-1\right)  k+m-1\right)  \left(  -y\right)
^{\left(  N-1\right)  k+m-N+j}\\
\nonumber
& =\sum_{\ell=1}^{N+k-1}\left(  -y\right)  ^{\left(  N-1\right)  k-N+\ell}%
\sum_{j=1}^{\min\left(  N,\ell\right)  }\left(  -1\right)  ^{N-j}e_{N-j}%
\frac{\left(  1-k\right)  _{\ell-j}}{\left(  \left(  N-1\right)  k\right)
_{\ell-j}}\\
\nonumber
& \times\left(  -1\right)  ^{\ell-j}S_{\ell-j}\left(  k;a\right)  A\left(
N,k,j,\left(  N-1\right)  k+\ell-j-1\right)  .
\end{align}
It is required to show that the $j$-sum vanishes for each $\ell$. At
$\ell=1,j=1$ there is only one term and $A\left(  N,k,1,\left(  N-1\right)
k-1\right)  =0$. Replace $A\left(  \cdot\right)  $ by its definition
(\ref{defAN}) and simplify%
\begin{align}
\nonumber
& \frac{\left(  1-k\right)  _{\ell-j}}{\left(  \left(  N-1\right)  k\right)
_{\ell-j}}\left(  1+j-\ell-\left(  N-1\right)  k\right)  _{N-1-j}\left(
1-k+\ell-j\right)  _{j-1}\left(  \ell-j+k\left(  j-1\right)  \right)  \\
& =\left(  -1\right)  ^{\ell-j}\left(  1-k\right)  _{\ell-1}\left(
\ell-j+k\left(  j-1\right)  \right)  \frac{\left(  1+j-\ell-\left(
N-1\right)  k\right)  _{N-1-j}}{\left(  1+j-\ell-\left(  N-1\right)  k\right)
_{\ell-j}}\\
\nonumber
& =\left(  -1\right)  ^{\ell-j}\left(  1-k\right)  _{\ell-1}\left(
\ell-j+k\left(  j-1\right)  \right)  \frac{\left(  1-\ell-\left(  N-1\right)
k\right)  _{N-1}}{\left(  1-\ell-\left(  N-1\right)  k\right)  _{\ell}}.
\end{align}
The denominator does not vanish because $\left(  N-1\right)  k>0$.
Taking out the factors depending only on $\ell$ the $j$-sum becomes
\begin{equation}
\sum_{j=1}^{\min\left(  N,\ell\right)  }\left(  -1\right)  ^{j}\left(
\ell-j+k\left(  j-1\right)  \right)  e_{N-j}S_{\ell-j}\left(  k;a\right)
.\label{jsum}%
\end{equation}
There is a recurrence relation for $S_{m}\left(  k;a\right)  $; the elementary
symmetric function of degree $m$ in $\left\{  \frac{1}{a_{1}-a_{N}}%
,\ldots,\frac{1}{a_{N-1}-a_{N}}\right\}  $ equals $\frac{e_{N-1-m}}{e_{N-1}}$
for $0\leq m\leq N-1$. The generating function of $\left\{  S_{m}\left(
k;a\right)  \right\}  $ is $g\left(  r\right)  ^{-k}$ where%
\begin{equation}
g\left(  r\right)  :=\prod_{j=1}^{N-1}\left(  1-\frac{r}{a_{j}-a_{N}}\right)
=\sum_{i=0}^{N-1}\left(  -1\right)  ^{i}\frac{e_{N-1-i}}{e_{N-1}}r^{i}.
\end{equation}
Extract the coefficient of $r^{m}$ in the following equation%
\begin{align}
g\left(  r\right)  \frac{\partial}{\partial r}\left[  g\left(  r\right)
^{-k}\right]    & =-k\left(  \frac{\partial}{\partial r}g\left(  r\right)
\right)  g\left(  r\right)  ^{-k}\\
\nonumber
\sum_{i=0}^{N-1}\left(  -1\right)  ^{i}\frac{e_{N-1-i}}{e_{N-1}}r^{i}%
\sum_{j=0}^{\infty}jS_{j}\left(  k;a\right)  r^{j-1}  & =-k\sum_{i=0}%
^{N-1}i\left(  -1\right)  ^{i}\frac{e_{N-1-i}}{e_{N-1}}r^{i-1}\sum
_{j=0}^{\infty}S_{j}\left(  k;a\right)  r^{j},
\end{align}
to obtain%
\begin{equation}
\sum_{i=0}^{\min\left(  m+1,N-1\right)  }\left(  -1\right)  ^{i}\left(
m-i+1+ki\right)  \frac{e_{N-1-i}}{e_{N-1}}S_{m-i+1}\left(  k;a\right)  =0.
\end{equation}
Now set $i=j-1$ and $m=\ell-2$ (recall the case $\ell=1$ was already done) to
show that the expression in (\ref{jsum}) vanishes.

\medskip 
\begin{theorem}
\label{shadeq1}Suppose $k=1,2,3,\ldots$ then the density $f\left(  x\right)  $
of \linebreak
$\mathcal{D}\left(  a_{1},\ldots,a_{N};k,\ldots,k\right)  $ 
satisfies the
linear differential equation $\mathcal{T}_{k}f\left(  x\right)  =0$ at all
$x\notin\left\{  a_{1},\ldots,a_{N}\right\}  $.
\end{theorem}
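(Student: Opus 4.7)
The plan is to reduce the verification of $\mathcal{T}_k f = 0$ to a single open interval between consecutive knots, then invoke the permutation symmetry of the density formula to propagate the conclusion to every other interval. The key observation enabling this reduction is that the formula in Theorem~\ref{cxshad}, together with Lemma~\ref{kparfrac}, is completely symmetric in $(a_1,\ldots,a_N)$, so it suffices to verify the equation on $(a_{N-1}, a_N)$, where Proposition~\ref{topF} supplies an explicit series representation for $1 - F(x)$.

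First I would differentiate the formula in Proposition~\ref{topF} to obtain the density $f_0$ on $(a_{N-1}, a_N)$ as an explicit (truncated, since $(1-k)_{|\alpha|} = 0$ for $|\alpha| \geq k$) series in powers of $(-y)^{(N-1)k + m - 1}$, where $y := x - a_N$ and the coefficients are the symmetric functions $S_m(k;a)$ in the shifted knots $\{a_i - a_N\}$. The natural building block is therefore a pure power $(-y)^c$, and the core computational step is to evaluate $\mathcal{T}_k((-y)^c)$ for generic $c$. Expanding $P_N(x) = y\prod_{i=1}^{N-1}(y - (a_i - a_N))$ in terms of the elementary symmetric polynomials $e_j$ and using $\partial^j (-y)^c = (-c)_j (-y)^{c-j}$, the inner $j$-sum collapses by a Chu--Vandermonde style identity to the closed-form coefficient $A(N,k,n,c)$ defined in (\ref{defAN}).

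Substituting this evaluation back into the series for $f_0$ and reindexing by $\ell = m + j$, the vanishing of $\mathcal{T}_k f_0$ reduces to verifying the single combinatorial identity
\[
\sum_{j=1}^{\min(N,\ell)} (-1)^j \bigl(\ell - j + k(j-1)\bigr)\, e_{N-j}\, S_{\ell-j}(k;a) = 0 \qquad (\ell \geq 1),
\]
together with the boundary case $\ell = 1$, which is immediate from $A(N, k, 1, (N-1)k - 1) = 0$. This identity is the main obstacle, and I would dispatch it by passing to generating functions: setting $g(r) := \prod_{j=1}^{N-1}(1 - r/(a_j - a_N))$ so that $\sum_m S_m(k;a)\, r^m = g(r)^{-k}$, the tautology $g(r)\,\partial_r[g(r)^{-k}] = -k\, g'(r)\, g(r)^{-k}$ yields, upon extracting the coefficient of $r^{\ell - 2}$, exactly the required vanishing.

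With the interval $(a_{N-1}, a_N)$ handled, I would conclude by applying the symmetry of Theorem~\ref{cxshad} and Lemma~\ref{kparfrac}: a relabelling of the knots reduces any interval $(a_i, a_{i+1})$ to the top interval, so $\mathcal{T}_k f = 0$ holds pointwise away from the knots. I expect the $j$-sum identity to be the only genuinely delicate step; the rest is bookkeeping in Pochhammer symbols and elementary symmetric functions.
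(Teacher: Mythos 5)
Your proposal follows essentially the same route as the paper: evaluate $\mathcal{T}_{k}\left(\left(-y\right)^{c}\right)$ via Chu--Vandermonde to get the coefficients $A\left(N,k,n,c\right)$, apply this to the series for $f_{0}$ from Proposition~\ref{topF}, reduce to the $j$-sum identity, kill it with the generating-function relation $g\left(r\right)\partial_{r}\left[g\left(r\right)^{-k}\right]=-k\,g'\left(r\right)g\left(r\right)^{-k}$, and extend by the symmetry coming from Lemma~\ref{kparfrac}. The only spot to tighten is the last step: the extension is not a relabelling of intervals but the observation that each symmetric piece $p_{i}$ of the spline decomposition individually satisfies the (polynomial, hence globally valid) equation away from $0$ and $a_{i}$, so their sum does too, with a final translation to drop the assumption $a_{1}\geq 0$.
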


\begin{proof}
Assume first that $a_{1}>0$. The above argument showed that $\mathcal{T}%
_{k}f\left(  x\right)  =0$ for $a_{N-1}<x<a_{N}$. On this interval $f\left(
x\right)  $ is a constant multiple of%
\begin{equation}
p_{N}\left(  x\right)  :=\sum\limits_{j=1}^{k}\beta_{Nj}\dfrac{1}{B\left(
Nk-j,j\right)  a_{N}}\left(  \frac{x}{a_{N}}\right)  _{+}^{j-1}\left(
1-\frac{x}{a_{N}}\right)  _{+}^{Nk-j-1},
\end{equation}
-- see Theorem~\ref{cxshad}.
 Because $\mathcal{T}_{k}p_{N}\left(  x\right)  =0$
is a polynomial equation it holds for all $x\neq0,a_{N}$. The piecewise
polynomial $p_{N}$ has coefficients which are symmetric in $a_{1}
,\ldots,a_{N-1}$ -- see equation~(\ref{kparfrac}).
Hence the differential equation 
is symmetric in $\left(  a_{1},\ldots,a_{N}\right)$ 
and each piece
\begin{equation}
p_{i}\left(  x\right)  :=\sum\limits_{j=1}^{k}\beta_{ij}\dfrac{1}{B\left(
Nk-j,j\right)  a_{i}}\left(  \frac{x}{a_{i}}\right)  _{+}^{j-1}\left(
1-\frac{x}{a_{i}}\right)  _{+}^{Nk-j-1}%
\end{equation}
satisfies the differential equation for $x\neq0,a_{i}$. 
The density is the sum
$\sum_{i=1}^{N}p_{i}$ thus $\mathcal{T}_{k}f\left(  x\right)  =0$ at each
$x\notin\left\{  a_{1},\ldots,a_{N}\right\}  $. The density $f_{c}\left(
x\right)  $ of \linebreak $\mathcal{D}\left(  
a_{1}+c,\ldots,a_{N}+c;k,\ldots,k\right)  $
equals the translate $f\left(  x-c\right)  $ and the differential operator
$\mathcal{T}_{k}$ has a corresponding translation property and thus the
restriction $a_{1}>0$ can be removed.
\end{proof}
\medskip

\begin{corollary}
If $k>0$ then the density $f\left(  x\right)  $ of $\mathcal{D}\left(
a_{1},\ldots,a_{N};k,\ldots,k\right)  $ satisfies the linear differential
equation $\mathcal{T}_{k}f\left(  x\right)  =0$ at all $x\notin\left\{
a_{1},\ldots,a_{N}\right\}  $.
\end{corollary}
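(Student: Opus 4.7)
My plan is to extend Theorem~\ref{shadeq1} from integer $k$ to all $k>0$ by analytic continuation in the parameter $k$. Fix $x_{0}\notin\{a_{1},\ldots,a_{N}\}$, say $x_{0}\in(a_{M},a_{M+1})$. By Remark~\ref{polyt}, $F(x_{2})-F(x_{1})$ is jointly analytic in $x_{2}$ and in the parameters $(k_{1},\ldots,k_{N})$ on the set $\{a_{M}<x_{1}<x_{2}<a_{M+1},\ \operatorname{Re}k_{i}>0\}$. Specializing $k_{1}=\cdots=k_{N}=k$ and differentiating in $x_{2}$, each derivative $\partial^{j}f(x_{0};k)$ for $0\leq j\leq N-1$ is holomorphic in $k$ on the right half-plane. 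Because the coefficients of $\mathcal{T}_{k}$ are polynomial in $k$ and in $x$, the function
\[
G(k):=(\mathcal{T}_{k}f)(x_{0};k)
\]
is holomorphic on $\{\operatorname{Re}k>0\}$, and Theorem~\ref{shadeq1} gives $G(n)=0$ for every positive integer $n$.

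To conclude $G\equiv 0$ on the right half-plane, I would invoke Carlson's theorem: a function holomorphic on $\{\operatorname{Re}k>0\}$, of exponential type strictly less than $\pi$, that vanishes on $\mathbb{N}$ is identically zero. The required growth estimate for $G$ follows from explicit series representations of $f(x_{0};k)$ --- on the top interval directly from Proposition~\ref{topF}, and on lower intervals from analogous integrals over the slicing polytope with $2M(N-M)$ vertices --- whose Gamma and Pochhammer factors have at most polynomial growth on vertical strips, giving $|G(k)|=O(e^{\varepsilon|k|})$ for every $\varepsilon>0$.

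A more direct route handles the top interval $(a_{N-1},a_{N})$ by inspection: the calculation in the proof of Theorem~\ref{shadeq1} verifying $\mathcal{T}_{k}f_{0}=0$ relies only on the convergent series of Proposition~\ref{topF}, the recurrence for the symmetric functions $S_{m}(k;a)$, and the Chu--Vandermonde identity, none of which require $k$ to be an integer; rereading that argument verbatim establishes the ODE on $(a_{N-1},a_{N})$ for every $k>0$. The main obstacle is then propagating the result to the intermediate intervals $(a_{M},a_{M+1})$ with $1\leq M<N-1$: for integer $k$, Theorem~\ref{shadeq1} exploits the symmetric partial-fraction decomposition to reduce all intervals to the top one, a device unavailable when $k\notin\mathbb{N}$. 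Either (i) one completes the Carlson argument above using the growth bound, or (ii) one derives an analogous convergent series for $f$ on each intermediate interval from the volume formula for the slicing polytope and re-runs the Chu--Vandermonde reduction; the latter is combinatorially heavier but avoids the analytic-continuation detour.
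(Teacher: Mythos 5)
Your main argument is exactly the paper's proof: the paper likewise fixes an interval $(a_{M},a_{M+1})$, invokes Remark~\ref{polyt} to get analyticity of $F$ (hence of $\mathcal{T}_{k}\partial_{x}F$) in $k$ on $\operatorname{Re}k>0$, and applies Carlson's theorem to the vanishing at $k=1,2,3,\ldots$ established in Theorem~\ref{shadeq1}. Your added remarks on the growth estimate required by Carlson's theorem, and the sketched alternative of re-running the top-interval computation directly for non-integer $k$, go slightly beyond what the paper writes but do not change the route.
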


\begin{proof}
Suppose $a_{M}<x_{1}<x_{2}<a_{M+1}$. The probability $\Pr\left\{
x_{1}<X<x_{2}\right\}  $ is given by a definite integral with respect to an
integrand which is analytic for $\operatorname{Re}k>0$ over a polytope in
$\mathbb{T}_{N-1}$ whose vertices are independent of $k$ and analytic in
$x_{1},x_{2}$ -- see Remark \ref{polyt}. Thus the distribution function
$F\left(  x\right)  $ at $x$ is analytic for $\operatorname{Re}k>0$ and
extends to an analytic function in $x$ for 
$x_{1}<\operatorname{Re} x <x_{2},\left\vert \operatorname{Im}x\right\vert <\varepsilon$ for some $\varepsilon>0$. 
The differential equation $\mathcal{T}_{k}\frac{\partial
}{\partial x}F\left(  x\right)  =0$ is satisfied for each $k=1,2,3,\ldots$,
this is an analytic relation and extends to all $\operatorname{Re}k>0$ by
Carlson's theorem (see Henrici \cite[vol.2,p.334]{H}).
\end{proof}
\medskip

We can now assert the validity of the equation for $k=\frac{1}{2},\frac{3}%
{2},\ldots$ which applies to real shadows or the repeated eigenvalue case
(each is repeated 3 times, or 5 times, etc.). It is not clear what happens if
just one eigenvalue is repeated, note that the main result used symmetric
functions of $\frac{1}{a_{i}-a_{N}}$. It is plausible that the equation
applies in intervals adjacent to simple (non-repeated) eigenvalues.

The case $k=\frac{1}{2}$ is of special interest since it applies to the real
shadow when the eigenvalues are pairwise distinct. The equation $\mathcal{T}%
_{1/2}f\left(  x\right)  =0$ is%
\begin{equation}
P_{N}\left(  x\right)  \partial^{N-1}f\left(  x\right)  +\sum_{j=1}%
^{N-1}\left(  -1\right)  ^{j}\frac{N-j}{N}\frac{\left(  -N/2\right)  _{j}}%
{j!}\partial^{j}P_{N}\left(  x\right)  \partial^{N-1-j}f\left(  x\right)  =0.
\end{equation}
When $N$ is even then the terms $\partial^{m}f\left(  x\right)  $ for $0\leq
m\leq\frac{N}{2}-2$ drop out, that is any polynomial of degree $\frac{N}{2}-2$
satisfies the equation. This property will be made precise in the next section.

The \textit{indicial equation} is important because it provides information
about the solutions in neighborhoods of the knots, that is, the solutions have
the form%
\begin{equation}
\sum_{n=0}^{\infty}\gamma_{n}\left(  x-a_{j}\right)  ^{n+c},\sum_{n=0}%
^{\infty}\gamma_{n}\left(  a_{j}-x\right)  ^{n+c},
\end{equation}
(depending on whether the solution is valid for $x>a_{j}$ or $x<a_{j}$) where
$c$ is a solution of the indicial equation: this comes from the coefficient of
the lowest power in $\mathcal{T}_{k}\left(  a_{N}-x\right)  ^{c}$ from
equation (\ref{Tkxc}), namely%
\begin{equation}
e_{N-1}A\left(  N,k,1,c\right)  =\left(  -1\right)  ^{N-1}e_{N-1}\left(
-c\right)  _{N-2}\left(  c+1-\left(  N-1\right)  k\right)  =0.
\end{equation}
The solutions, called \textit{critical exponents}, are $c=0,1,\ldots
,N-3,\left(  N-1\right)  k-1$. In the real shadow situation with $k=\frac
{1}{2}$ we see there are two different types: when $N=2m+1$ the critical
exponent $c=m-1$ is repeated which leads to a logarithmic solution:
$\sum_{n=0}^{\infty}$ $\gamma_{n}\left(  x-a_{j}\right)  ^{m-1+n}$ and
$\log\left\vert x-a_{j}\right\vert \sum_{n=0}^{\infty}$ $\gamma_{n}^{\prime
}\left(  x-a_{j}\right)  ^{m-1+n}$. This actually occurs, as will be shown in
the sequel.

\section{The real shadow}\label{sec:real-shadow}

We will use \textquotedblleft heuristic extrapolation\textquotedblright\ to
postulate a set of formulas for the real shadow (\ref{realshad})
-- the density of $\mathcal{D}\left(
a_{1},\ldots,a_{N};\frac{1}{2},\ldots,\frac{1}{2}\right)$.
 In the notation
of Theorem~\ref{shadeq1} there is a set of functions $p_{j}\left(  x\right)
$, with a  symmetry property, such that the density $f\left(  x\right)
=\sum_{j=m}^{N}p_{j}\left(  x\right)  $ in the interval $\left(  a_{m-1}%
,a_{m}\right)  $. It is straightforward to do this in the top interval
$\left(  a_{N-1},a_{N}\right)$ but the expression involves square roots of
quantities that become negative for $x<a_{N-1}$. The idea is to adopt certain
branches of the complex square roots which give plausible results and then to
prove the validity of the postulated formulas. This will be done by using
complex contour integration to verify the known moment generating function.

We begin by pointing out that the expression for the density in $\left(
a_{N-1},a_{N}\right)  $ found in Proposition \ref{topF} is a multiple infinite
series which diverges for $\left\vert x-a_{N}\right\vert >a_{N}-a_{N-1}$, not
an easy expression to evaluate. We can replace it by a one-variable (definite)
integral when $k=\frac{1}{2}$. Suppose the series $g\left(  r\right)
=\sum_{n=0}^{\infty}\gamma_{n}r^{n}$ converges for $\left\vert r\right\vert
\leq1$ then%
\begin{equation}
\frac{1}{B\left(  \frac{1}{2},\frac{N}{2}-1\right)  }\int_{0}^{1}\sum
_{n=0}^{\infty}\gamma_{n}t^{n}t^{-1/2}\left(  1-t\right)  ^{N/2-2}%
dt=\sum_{n=0}^{\infty}\frac{\left(  \frac{1}{2}\right)  _{n}}{\left(
\frac{N-1}{2}\right)  _{n}}\gamma_{n}.
\end{equation}
Apply this to
\begin{equation}
g\left(  r\right)  =%
{\textstyle\prod_{j=1}^{N-1}}
\left(  1-\frac{a_{N}-x}{a_{N}-a_{j}}r\right)  ^{-\frac{1}{2}}=\sum_{\alpha
\in\mathbb{N}_{0}^{N-1}}\prod_{j=1}^{N-1}\frac{\left(  k_{i}\right)
_{\alpha_{j}}}{\alpha_{j}!}\left(  \frac{a_{N}-x}{a_{N}-a_{j}}\right)
^{a_{j}}r^{\left\vert \alpha\right\vert }%
\end{equation}
and use the formula for the density from Proposition \ref{topF} 
and act with $-\frac{\partial}{\partial x}$ on $1-F\left(  x\right)$
 to obtain the density for $x \in (a_{N-1},a_{N})$,
\begin{equation}
\begin{split}
f\left(  x\right)  = &\frac{N-2}{2\pi\left(  a_{N}-x\right)  }\prod_{j=1}%
^{N-1}\left(  \frac{a_{N}-x}{a_{N}-a_{j}}\right)  ^{\frac{1}{2}} \times \\
& \times \int_{0}%
^{1}\prod_{j=1}^{N-1}\left(  1-\frac{a_{N}-x}{a_{N}-a_{j}}t\right)
^{-\frac{1}{2}}t^{-1/2}\left(  1-t\right)  ^{N/2-2}dt.
\end{split}
\end{equation}
Note that $B\left(  \frac{1}{2},\frac{N}{2}-1\right)  B\left(  \frac{1}{2}%
,\frac{N-1}{2}\right)  =\frac{\Gamma\left(  1/2\right)  ^{2}\Gamma\left(
N/2-1\right)  }{\Gamma\left(  N/2\right)  }=\frac{\pi}{N/2-1}$. 
Make the change of variable $s=a_{N}-t\left(  a_{N}-x\right)  $, then%
\begin{equation}
f\left(  x\right)  =\frac{N-2}{2\pi}
\int_{x}^{a_{N}}\left(  a_{N}-s\right)
^{-\frac{1}{2}}\prod_{j=1}^{N-1}\left(  s-a_{j}\right)  ^{-\frac{1}{2}}\left(
s-x\right)  ^{\frac{N}{2}-2}ds.
\end{equation}
Suppose we want to interpret this integral for $a_{N-2}<x<a_{N-1}$ then we
must pick a branch of $\left(  s-a_{N-1}\right)  ^{-\frac{1}{2}}$, that is we
need to choose the sign in $\left(  s-a_{N-1}\right)  ^{-\frac{1}{2}}%
=\pm\mathrm{i}\left(  a_{N-1}-s\right)  ^{-\frac{1}{2}}$ , where
$\mathrm{i}=\sqrt{-1}$. Denote the integral by $f_{N}\left(  x\right)  $.
Using the symmetry heuristics we define
\begin{equation}
f_{N-1}\left(  x\right)  =\frac{N-2}{2\pi}\int_{x}^{a_{N-1}}\left(
a_{N-1}-s\right)  ^{-\frac{1}{2}}\prod_{j=1,j\neq N-1}^{N}\left(
s-a_{j}\right)  ^{-\frac{1}{2}}\left(  s-x\right)  ^{\frac{N}{2}-2}ds,
\end{equation}
now we need to pick a branch for $\left(  s-a_{N}\right)  ^{-\frac{1}{2}}$ for
$s<a_{N}$. The requirement that $f_{N}\left(  x\right)  +f_{N-1}\left(
x\right)  $ be real for $a_{N-2}<x<a_{N-1}$ motivates the following:

\begin{enumerate}
\item For $0\leq j<N$ and $a_{1}<x\leq a_{N-j}$ let%
\begin{equation}
\begin{split}
f_{N-j}\left(  x\right)  = & \frac{N-2}{2\pi}\mathrm{i}^{j}\int_{x}^{a_{N-j}%
}\bigg( \prod\limits_{m=0}^{j} \left(  a_{N-m}-s\right)  ^{-\frac{1}{2}} 
\times
\\%
& \times \prod\limits_{m=j+1}^{N-1}\left(  s-a_{N-m}\right)  
^{-\frac{1}{2}}\left(s-x\right)  ^{\frac{N}{2}-2}\bigg)ds,
\end{split}
\end{equation}

\item for $0\leq j\leq N-2$ and $a_{N-j-1}\leq x<a_{N-j}$ the density is%
\begin{equation}
f\left(  x\right)  =\sum_{m=0}^{j}f_{N-m}\left(  x\right)  ,
\end{equation}

\item if $s<a_{m}$ then $\left(  s-a_{m}\right)  ^{-\frac{1}{2}}%
=-\mathrm{i}\left(  a_{m}-s\right)  ^{-\frac{1}{2}}$ for $2\leq m\leq N-1.$
\end{enumerate}

Suppose $a_{1}<x<a_{N-j-1}$ for some $j\geq 0$.
As a consequence we obtain then
\begin{equation}
\begin{split}
& f_{N-j}\left(  x\right)  +f_{N-j-1}\left(  x\right)  = \\
& \frac{N-2}{2\pi
}\mathrm{i}^{j}\int_{a_{N-j-1}}^{a_{N-j}}\prod\limits_{m=0}^{j}\left(
a_{N-m}-s\right)  ^{-\frac{1}{2}}\prod\limits_{m=j+1}^{N-1}\left(
s-a_{N-m}\right)  ^{-\frac{1}{2}}\left(  s-x\right)  ^{\frac{N}{2}-2}ds\\
& +\frac{N-2}{2\pi}\mathrm{i}^{j}\int_{x}^{a_{N-j-1}}\prod\limits_{m=0}%
^{j}\left(  a_{N-m}-s\right)  ^{-\frac{1}{2}}\prod\limits_{m=j+1}^{N-1}\left(
s-a_{N-m}\right)  ^{-\frac{1}{2}}\left(  s-x\right)  ^{\frac{N}{2}-2}ds\\
& +\frac{N-2}{2\pi}\mathrm{i}^{j+1}\int_{x}^{a_{N-j-1}}\prod\limits_{m=0}%
^{j+1}\left(  a_{N-m}-s\right)  ^{-\frac{1}{2}}\prod\limits_{m=j+2}%
^{N-1}\left(  s-a_{N-m}\right)  ^{-\frac{1}{2}}\left(  s-x\right)  ^{\frac
{N}{2}-2}ds.
\end{split}
\end{equation}
Due to equation (\ref{dirinteg})
 the factor $\left(  s-a_{N-j-1}\right)  ^{-\frac{1}{2}}$ 
in the second
integral is replaced by $-\mathrm{i}\left(  a_{N-j-1}-s\right)  ^{-\frac{1}%
{2}}$. Therefore the second and third integrals cancel out 
as  $\left(-\mathrm{i}\right)  \mathrm{i}^{j}+\mathrm{i}^{j+1}=0$.
Hence there are two different types of expressions 
for the density, depending on whether
$a_{N-2M}<x<a_{N-2M+1}$ or $a_{N-2M-1}<x<a_{N-2M}$. For $0\leq j\leq
\left\lfloor \frac{N-2}{2}\right\rfloor $ let%
\begin{equation}
g_{j}\left(  s\right)  :=\prod\limits_{m=0}^{2j}\left(  a_{N-m}-s\right)
^{-\frac{1}{2}}\prod\limits_{m=2j+1}^{N-1}\left(  s-a_{N-m}\right)
^{-\frac{1}{2}},
\end{equation}
then for $a_{N-2M}\leq x<a_{N-2M+1}$ (with $1\leq M\leq\frac{N-1}{2}$)
\begin{equation}
f\left(  x\right)  =\frac{N-2}{2\pi}\sum_{j=0}^{M-1}\left(  -1\right)
^{j}\int_{a_{N-2j-1}}^{a_{N-2j}}g_{j}\left(  s\right)  \left(  s-x\right)
^{\frac{N}{2}-2}ds,\label{evenF}%
\end{equation}
and for $a_{N-2M-1}\leq x<a_{N-2M}$ (with $0\leq M\leq\frac{N-2}{2}$)%
\begin{align}
f\left(  x\right)    & =\frac{N-2}{2\pi}\sum_{j=0}^{M-1}\left(  -1\right)
^{j}\int_{a_{N-2j-1}}^{a_{N-2j}}g_{j}\left(  s\right)  \left(  s-x\right)
^{\frac{N}{2}-2}ds\label{oddF}\\
& +\left(  -1\right)  ^{M}\frac{N-2}{2\pi}\int_{x}^{a_{N-2M}}g_{M}\left(
s\right)  \left(  s-x\right)  ^{\frac{N}{2}-2}ds.\nonumber
\end{align}
An important consequence of this formulation is that for even $N$ the density
is a polynomial of degree $\frac{N}{2}-2$ on the \textit{even} intervals
$\left(  a_{N-2M},a_{N-2M+1}\right)$,
which means that the parity by counting
intervals from the top down is even,
 so that $\left(  a_{N-1},a_{N}\right)  $ is \#1.

\medskip 
Now we are in position to formulate the main result of this work.
\begin{theorem}
\label{th:main-theorem}
For $N>2$ the formulas (\ref{evenF}) and (\ref{oddF}) give the real shadow 
of a real symmetric matrix with spectrum $\{a_i\}_{i=1}^N$ 
-- the density of 
$\mathcal{P}^{\mathbb{R}}_{\mathrm{diag}(a_{1},\ldots,a_{N})} = \mathcal{D}\left(  a_{1},\ldots,a_{N};\frac{1}{2},\ldots,\frac
{1}{2}\right)  $.
\end{theorem}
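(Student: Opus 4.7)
The plan is to use the moment generating function from Lemma \ref{lemma:exp-value} as a unique identifier: since $X$ lives in the bounded interval $[a_1,a_N]$, its distribution is pinned down by its moments, and these are captured by the identity $\mathcal{E}[(1-rX)^{-N/2}] = \prod_{i=1}^N (1-ra_i)^{-1/2}$ valid for sufficiently small $|r|$. It therefore suffices to verify that the piecewise density $f$ postulated in (\ref{evenF})--(\ref{oddF}) satisfies
$$\int_{a_1}^{a_N} (1-rx)^{-N/2} f(x)\,dx = \prod_{i=1}^N (1-ra_i)^{-1/2}.$$

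My first step is to re-express the sum of real integrals in (\ref{evenF})--(\ref{oddF}) as a single complex contour integral. Define the multivalued function $\psi(s):=\prod_{m=1}^N (s-a_m)^{-1/2}$ on $\mathbb{C}$, with branch cuts placed precisely on the alternating intervals $[a_{N-2j-1},a_{N-2j}]$ for $j=0,1,\ldots,\lfloor (N-1)/2\rfloor$, and principal branches taken elsewhere. A direct computation of the boundary values $\psi(\sigma\pm\ii 0)$ on each cut shows that the jump reproduces the integrand $g_j(\sigma)$ up to a phase $\ii^{2j+1}$; combining these jumps with the extra real integral from $x$ to $a_{N-2M}$ appearing in (\ref{oddF}) recasts the density as
$$f(x) = \frac{N-2}{2\pi\ii}\oint_{C(x)} \psi(s)(s-x)^{N/2-2}\,ds,$$
where $C(x)$ is a contour encircling every cut lying above $x$, plus a degenerate loop from $x$ to $a_{N-2M}$ in the odd-interval case. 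The alternating signs $(-1)^j$ and powers $\ii^j$ appearing in (\ref{evenF})--(\ref{oddF}) are exactly what is needed to match the monodromy of $\psi$ around each cut.

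Substituting this representation into the MGF integral and applying Fubini, the inner $x$-integration of $(s-x)^{N/2-2}(1-rx)^{-N/2}$ is elementary (beta integrals or $_2F_1$ at a special argument) and leaves an outer integral of the shape $\oint \psi(s)(1-rs)^{-1}\,ds$ up to explicit prefactors. Deforming this contour outward, the integrand decays like $s^{-1-N/2}$ at infinity, so by residue calculus only the simple pole at $s=1/r$ contributes, and its residue evaluates to $r\,\psi(1/r) = \prod_{i=1}^N (1-ra_i)^{-1/2}$ after cancellation of the prefactors, giving the desired identity. The main obstacle will be the bookkeeping on branch choices: verifying that the phase-cancellation identity $(-\ii)\ii^j+\ii^{j+1}=0$ already used in the construction of (\ref{evenF})--(\ref{oddF}) really does promote the piecewise expressions to a consistent single-valued $\psi$ with correctly matched boundary values on every cut, and checking that $C(x)$ can be deformed past the remaining knots without collecting unwanted residues. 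As secondary sanity checks, normalization ($r=0$) and positivity of $f$ --- together with agreement with the series in Proposition \ref{topF} on the top interval $(a_{N-1},a_N)$, where our contour collapses to the original real integral --- confirm that the postulated $f$ is genuinely a probability density, after which the MGF match completes the proof.
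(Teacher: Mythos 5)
Your proposal matches the paper's proof in all essentials: the theorem is established by verifying the moment generating function of Lemma~\ref{lemma:exp-value} (which determines a distribution supported on a bounded interval), and the verification is carried out by residue calculus applied to $\prod_{j}\left(  z-a_{j}\right)  ^{-1/2}$ with branch cuts along the alternating intervals $\left[  a_{N-2j-1},a_{N-2j}\right]$, the sole contribution coming from the simple pole at $z=1/r$ (the paper's $z=a_{1}+1/r$). The one difference is the order of operations: the paper first integrates out $x$ against $\left(  x-a_{1}\right)  ^{n}$ using Beta integrals, so that the contour integrand $G$ of (\ref{eqn:func-G}) is already single-valued off the bounded cuts for both parities of $N$ (the factor $\left(  z-a_{1}\right)  ^{N/2-1}$ absorbs the half-integer power at $a_1$), which sidesteps the extra branch point at $s=x$ that your contour representation of $f$ itself would have to manage in the odd-interval case of (\ref{oddF}).
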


\smallskip 

We prove the validity of the above theorem by showing that%
\begin{equation}
\int_{a_{1}}^{a_{N}}\left(  1-r\left(  x-a_{1}\right)  \right)  ^{-\frac{N}%
{2}}f\left(  x\right)  dx=\prod_{j=2}^{N}\left(  1-r\left(  a_{j}%
-a_{1}\right)  \right)  ^{-\frac{1}{2}},\left\vert r\right\vert <\frac
{1}{a_{N}-a_{1}},
\end{equation}
this is the moment generating function, see Lemma \ref{mgenF}. Start by
expressing $\int_{a_{1}}^{a_{N}}\left(  x-a_{1}\right)  ^{n}f\left(  x\right)
dx$ as a sum of integrals, for $n=0,1,2,\ldots$. The contribution of an
\textquotedblleft even\textquotedblright\ interval $a_{N-2M}\leq x\leq
a_{N-2M+1}$ to the moment is%
\begin{equation}
\frac{N-2}{2\pi}\sum_{j=0}^{M-1}\left(  -1\right)  ^{j}\int_{a_{N-2M}%
}^{a_{N-2M+1}}\left(  x-a_{1}\right)  ^{n}dx\int_{a_{N-2j-1}}^{a_{N-2j}}%
g_{j}\left(  s\right)  \left(  s-x\right)  ^{\frac{N}{2}-2}ds,
\end{equation}
and the contribution of an \textquotedblleft odd\textquotedblright\ interval
$a_{N-2M-1}\leq x\leq a_{N-2M}$ is%
\begin{equation}
\begin{split}
&  \frac{N-2}{2\pi}\sum_{j=0}^{M-1}\left(  -1\right)  ^{j}\int_{a_{N-2M-1}%
}^{a_{N-2M}}\left(  x-a_{1}\right)  ^{n}dx\int_{a_{N-2j-1}}^{a_{N-2j}}%
g_{j}\left(  s\right)  \left(  s-x\right)  ^{\frac{N}{2}-2}ds\\
&  +\left(  -1\right)  ^{M}\frac{N-2}{2\pi}\int_{a_{N-2M-1}}^{a_{N-2M}}\left(
x-a_{1}\right)  ^{n}dx\int_{x}^{a_{N-2M}}g_{M}\left(  s\right)  \left(
s-x\right)  ^{\frac{N}{2}-2}ds.
\end{split}
\end{equation}
The term $g_{j}\left(  s\right)  \left(  s-x\right)  ^{\frac{N}{2}-2}$ appears
in the intervals $a_{N-2M}\leq x\leq a_{N-2M+1}$ for $M\geq j+1$ and in
$a_{N-2M-1}\leq x\leq a_{N-2M}$ for $M\geq j$. Collect these terms:%
\begin{equation}
\begin{split}
&  
\!\!\!
\frac{N-2}{2\pi}\left(  -1\right)^{j}
\!\!\!
\int_{a_{N-2j-1}}^{a_{N-2j}}
\!\!\!\!\!\!\!\!\!\!\!\!
g_{2j}
\!
\left(  s\right)
\!
\left\{ \sum_{M=j+1}^{\left\lfloor \frac{N-1}%
{2}\right\rfloor }
\!
\int_{a_{N-2M}}^{a_{N-2M+1}}
\!\!\!\!
+
 \!\!
\sum_{M=j+1}^{\left\lfloor
\frac{N-2}{2}\right\rfloor}
\!
\int_{a_{N-2M-1}}^{a_{N-2M}}\right\}
\!\!
  \left(x-a_{1}\right)^{n}
\!
\left(s-x\right)  ^{\frac{N}{2}-2}dxds\\
&  +\frac{N-2}{2\pi}\left(  -1\right)  ^{j}\int_{a_{N-2j-1}}^{a_{N-2j}}\left(
x-a_{1}\right)  ^{n}dx\int_{x}^{a_{N-2j}}g_{j}\left(  s\right)  \left(
s-x\right)  ^{\frac{N}{2}-2}ds.
\end{split}
\end{equation}
The terms in the first line add up to just one interval of integration
$a_{1}\leq x\leq a_{N-2j-1}$. In the second line reverse the order of
integration (note the region for the double integral is $a_{N-2j-1}\leq x\leq
s\leq a_{N-2j}$) to obtain%
\begin{equation}
\frac{N-2}{2\pi}\left(  -1\right)  ^{j}\int_{a_{N-2j-1}}^{a_{N-2j}}%
g_{j}\left(  s\right)  ds\int_{a_{N-2j-1}}^{s}\left(  x-a_{1}\right)
^{n}\left(  s-x\right)  ^{\frac{N}{2}-2}dx.
\end{equation}
The terms with $g_{j}$ add up to
\begin{equation}
\begin{split}
&  \frac{N-2}{2\pi}\left(  -1\right)  ^{j}\int_{a_{N-2j-1}}^{a_{N-2j}}%
g_{j}\left(  s\right)  ds\int_{a_{1}}^{s}\left(  x-a_{1}\right)  ^{n}\left(
s-x\right)  ^{\frac{N}{2}-2}dx\\
&  =\frac{\left(  -1\right)  ^{j}}{\pi}\left(  \frac{N-2}{2}\right)  B\left(
\frac{N}{2}-1,n+1\right)  \int_{a_{N-2j-1}}^{a_{N-2j}}g_{j}\left(  s\right)
\left(  s-a_{1}\right)  ^{\frac{N}{2}+n-1}ds,
\end{split}
\end{equation}
from the Beta integral $\int_{a}^{b}\left(  b-x\right)  ^{\alpha-1}\left(
x-a\right)  ^{\beta-1}dx=\left(  b-a\right)  ^{\alpha+\beta-1}B\left(
\alpha,\beta\right)$ with $a=a_{1},b=s,\alpha=\frac{N}{2}-1,\beta=n+1$.
Furthermore $\left(  \frac{N-2}{2}\right)  B\left(  \frac{N}{2}-1,n+1\right)
=\dfrac{n!}{\left(  \frac{N}{2}\right)  _{n}}$. 
Therefore we have
\begin{equation}
\int_{a_{1}}^{a_{N}}\left(  x-a_{1}\right)  ^{n}f\left(  x\right)  dx=\frac
{1}{\pi}\frac{n!}{\left(  \frac{N}{2}\right)  _{n}}\sum_{j=0}^{\left\lfloor
\frac{N-2}{2}\right\rfloor }\left(  -1\right)  ^{j}\int_{a_{N-2j-1}}%
^{a_{N-2j}}g_{j}\left(  s\right)  \left(  s-a_{1}\right)  ^{\frac{N}{2}%
+n-1}ds,
\end{equation}
and%
\begin{gather}
\nonumber
\int_{a_{1}}^{a_{N}}\left(  1-r\left(  x-a_{1}\right)  \right)  ^{-\frac{N}%
{2}}f\left(  x\right)  dx=\sum_{n=0}^{\infty}\frac{\left(  \frac{N}{2}\right)
_{n}}{n!}r^{n}\int_{a_{1}}^{a_{N}}\left(  x-a_{1}\right)  ^{n}f\left(
x\right)  dx\\
=\frac{1}{\pi}\sum_{j=0}^{\left\lfloor \frac{N-2}{2}\right\rfloor }\left(
-1\right)  ^{j}\int_{a_{N-2j-1}}^{a_{N-2j}}g_{j}\left(  s\right)  \left(
s-a_{1}\right)  ^{\frac{N}{2}-1}\left(  1-r\left(  s-a_{1}\right)  \right)
^{-1}ds,
\end{gather}
where the infinite sum converges for 
$\left\vert r\right\vert <\frac{1}{a_{N}-a_{1}}$.
 We will evaluate the integral by residue
 calculus applied to the analytic function
\begin{equation}\label{eqn:func-G}
G\left(  z\right)  :=\prod\limits_{j=1}^{N}\left(  z-a_{j}\right)  ^{-\frac
{1}{2}}
\left(  z-a_{1}\right) ^{\frac{N}{2}-1}\left(  1-r\left(
z-a_{1}\right)  \right)  ^{-1}
\end{equation}
for fixed small $r>0$
with suitable determination of the square roots.
For real $a,b$ with $a<b$ consider the analytic function $\left(  z-a\right)
^{-\frac{1}{2}}\left(  z-b\right)  ^{-\frac{1}{2}}$ defined on $\mathbb{C}%
\backslash\left[  a,b\right]  $, that is, the complex plane with the interval
$\left[  a,b\right]  $ removed. Set $z=a+r_{1}e^{\mathrm{i}\theta_{1}}%
=b+r_{2}e^{\mathrm{i}\theta_{2}}$, $r_{1},r_{2}>0$ and $\theta_{1}=\theta
_{2}=0$ for $z$ real and $z>b$, then let%
\begin{equation}
\left(  z-a\right)  ^{-\frac{1}{2}}\left(  z-b\right)  ^{-\frac{1}{2}%
}:=\left(  r_{1}r_{2}\right)  ^{-\frac{1}{2}}e^{-\mathrm{i}\left(  \theta
_{1}+\theta_{2}\right)  /2}%
\end{equation}
and let $\theta_{1},\theta_{2}$ vary continuously (from $0$) to determine the
values in the rest of the domain. This is well-defined: suppose $z$ is real
and $z<a$; approaching $z$ from the upper half-plane $\theta_{1},\theta_{2}$
change from $0$ to $\pi$ and $e^{-\mathrm{i}\left(  \theta_{1}+\theta
_{2}\right)  /2}$ changes from $1$ to $e^{-\mathrm{i}\pi}=-1$, and approaching
$z$ from the lower half-plane $\theta_{1},\theta_{2}$ change from $0$ to
$-\pi$ and $e^{-\mathrm{i}\left(  \theta_{1}+\theta_{2}\right)  /2}$ changes
from $1$ to $e^{\mathrm{i}\pi}=-1$.

\begin{lemma}
Suppose $h$ is analytic in a complex neighborhood of $\left[  a,b\right]  $
and $\gamma_{\varepsilon}$ is a closed contour oriented clockwise (negatively)
made up of the segments $\left\{  x+\mathrm{i}\varepsilon:a\leq x\leq
b\right\}  $, $\left\{  x-\mathrm{i}\varepsilon:a\leq x\leq b\right\}  $ and
semicircles $\left\{  a+\varepsilon e^{\mathrm{i}\theta}:\frac{\pi}{2}%
\leq\theta\leq\frac{3\pi}{2}\right\}  $, $\left\{  b+\varepsilon
e^{\mathrm{i}\theta}:-\frac{\pi}{2}\leq\theta\leq\frac{\pi}{2}\right\}  $ (for
sufficiently small $\varepsilon>0$) then%
\begin{equation}
\lim_{\varepsilon\rightarrow0_{+}}\oint_{\gamma_\epsilon} h\left(  z\right)  
\left(  z-a\right)
^{-\frac{1}{2}}\left(  z-b\right)  ^{-\frac{1}{2}}dz=-2\mathrm{i}\int_{a}%
^{b}h\left(  x\right)  \left(  \left(  b-x\right)  \left(  a-x\right)
\right)  ^{-\frac{1}{2}}dx.
\end{equation}

\end{lemma}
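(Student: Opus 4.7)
The plan is to break $\oint_{\gamma_{\varepsilon}}$ into its four pieces---upper segment, right semicircle around $b$, lower segment, left semicircle around $a$---compute the boundary values of the square-root factor on the two straight edges, and show that the two circular arcs contribute nothing in the limit.

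First I would compute the boundary values of $(z-a)^{-\frac{1}{2}}(z-b)^{-\frac{1}{2}}$ along the cut. Using the branch construction just before the lemma, and continuously varying the angles from the reference point $z>b$ (where $\theta_{1}=\theta_{2}=0$) through the upper half-plane to $z=x+\mathrm{i}\varepsilon$ with $a<x<b$, one finds $\theta_{1}\to 0^{+}$ and $\theta_{2}\to\pi^{-}$, hence $(z-a)^{-\frac{1}{2}}(z-b)^{-\frac{1}{2}}\to -\mathrm{i}/\sqrt{(x-a)(b-x)}$. Varying instead through the lower half-plane yields the opposite boundary value $+\mathrm{i}/\sqrt{(x-a)(b-x)}$.

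Second, I would bound the two arc integrals. On the semicircle about $b$ the factor $(z-b)^{-\frac{1}{2}}$ has modulus $\varepsilon^{-1/2}$ while $(z-a)^{-\frac{1}{2}}$ and $h(z)$ stay bounded; since the arc length is $\pi\varepsilon$, the contribution is $O(\varepsilon^{1/2})$ and vanishes in the limit. An identical estimate handles the arc around $a$, using analyticity of $h$ only for boundedness near the endpoints.

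Third, I would sum the straight-edge contributions with the clockwise orientation. The upper edge, traversed from $a$ to $b$, contributes $\int_{a}^{b} h(x)\,(-\mathrm{i})\,((x-a)(b-x))^{-\frac{1}{2}}\,dx$, and the lower edge, traversed from $b$ to $a$, contributes $\int_{b}^{a} h(x)\,(+\mathrm{i})\,((x-a)(b-x))^{-\frac{1}{2}}\,dx$. Adding these gives
\begin{equation}
\lim_{\varepsilon\to 0_{+}}\oint_{\gamma_{\varepsilon}} h(z)(z-a)^{-\frac{1}{2}}(z-b)^{-\frac{1}{2}}\,dz \;=\; -2\mathrm{i}\int_{a}^{b}\frac{h(x)}{\sqrt{(x-a)(b-x)}}\,dx,
\end{equation}
which is the stated formula once $((b-x)(a-x))^{-\frac{1}{2}}$ is read as the positive real $1/\sqrt{(b-x)(x-a)}$. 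The main difficulty is not analytical but combinatorial: one must track the factors of $\mathrm{i}$ arising from the branch choice together with the clockwise orientation of $\gamma_{\varepsilon}$ consistently, and a quick sanity check with $h\equiv 1$ against the residue at infinity (giving $-2\pi\mathrm{i}$) confirms that the signs are correct.
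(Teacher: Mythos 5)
Your proof is correct and follows essentially the same route as the paper's: bound the two semicircular arcs by $O(\varepsilon^{1/2})$, compute the boundary values $\mp\mathrm{i}\left(\left(x-a\right)\left(b-x\right)\right)^{-\frac{1}{2}}$ of the branch on the upper and lower edges, and add the two edge integrals with the clockwise orientation. Your angle bookkeeping ($\theta_{1}\to 0$, $\theta_{2}\to\pi$ on the upper edge) and the $h\equiv 1$ sanity check against the $-2\pi\mathrm{i}$ value are both consistent with the stated result.
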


\begin{proof}
On the semicircles the integrand is bounded by $M\varepsilon^{-\frac{1}{2}}$
for some $M<\infty$ and the length of the arc is $\pi\varepsilon$ so this part
of the integral tends to zero as $\varepsilon\rightarrow0_{+}$. Along
$\left\{  z=x+\mathrm{i}\varepsilon:a\leq x\leq b\right\}  $ the arguments are
$\theta_{1}\approx\pi$ and $\theta_{2}\approx0$ so $\left(  z-a\right)
^{-\frac{1}{2}}\left(  z-b\right)  ^{-\frac{1}{2}}\approx e^{-\mathrm{i}\pi
/2}\left(  r_{1}r_{2}\right)  ^{-\frac{1}{2}}$ and this part of the integral
$\approx-\mathrm{i}\int_{a}^{b}h\left(  x+\mathrm{i}\varepsilon\right)
\left(  \left(  b-x\right)  \left(  a-x\right)  \right)  ^{-\frac{1}{2}}dx$.
Along $\left\{  z=x-\mathrm{i}\varepsilon:a\leq x\leq b\right\}  $ the
arguments are $\theta_{1}\approx-\pi$ and $\theta_{2}\approx0$ so $\left(
z-a\right)  ^{-\frac{1}{2}}\left(  z-b\right)  ^{-\frac{1}{2}}\approx
e^{\mathrm{i}\pi/2}\left(  r_{1}r_{2}\right)  ^{-\frac{1}{2}}$ and this part
of the integral $\approx\mathrm{i}\int_{b}^{a}h\left(  x-\mathrm{i}%
\varepsilon\right)  \left(  \left(  b-x\right)  \left(  a-x\right)  \right)
^{-\frac{1}{2}}dx$. Adding the two pieces and letting $\varepsilon
\rightarrow0_{+}$ proves the claim.
\end{proof}

\medskip 
Now fix $r>0$ with $\frac{1}{r}>\max\left(  \left\vert a_{1}\right\vert
,\left\vert a_{N}\right\vert ,a_{N}-a_{1}\right)  $. Define a positively
oriented closed contour $\Gamma$ consisting of a large circle $\gamma=\left\{
z=\operatorname{Re}^{\mathrm{i}\theta}:0\leq\theta\leq2\pi\right\}  $ with
$R>\frac{1}{r}$ and $\left\{  \gamma_{j,\varepsilon}:0\leq j\leq\left\lfloor
\frac{N-2}{2}\right\rfloor \right\}  $ where $\gamma_{j,\varepsilon}$ is a
closed negatively oriented contour around the interval $\left[  a_{N-2j-1}%
,a_{N-2j}\right]  $ as in the Lemma, with $\varepsilon>0$ sufficiently small
so that the contours do not intersect --
see Fig.~\ref{fig:poles}. The function $G$ is meromorphic on
$\mathbb{C}\backslash\cup_{j=0}^{\left\lfloor \left(  N-2\right)
/2\right\rfloor }\left[  a_{N-2j-1},a_{N-2j}\right]  $ and has one simple pole
at $z=a_{1}+\frac{1}{r}$. By the (generalized) residue theorem%
\begin{equation}\label{eqn:complex-integral}
\frac{1}{2\pi\mathrm{i}}\oint\limits_{\Gamma}G\left(  z\right)
dz=\mathrm{res}_{z=a_{1}+\frac{1}{r}}G\left(  z\right)  .
\end{equation}
Using the determinations of roots described above let $z=a_{j}+r_{j}%
e^{\mathrm{i}\theta_{j}}$ for $1\leq j\leq N$ with $r_{j}>0$. For large
$\left\vert z\right\vert $ we see $\left\vert G\left(  z\right)  \right\vert
<M\left\vert z\right\vert ^{-2}$ so the integral around $\gamma$ (circle with
radius $R)$ tends to zero as $R\rightarrow\infty$. Consider $N$ even or odd 
separately.

\begin{figure}[]
    \centering
    \subfloat[]{\includegraphics[width=0.48\linewidth]{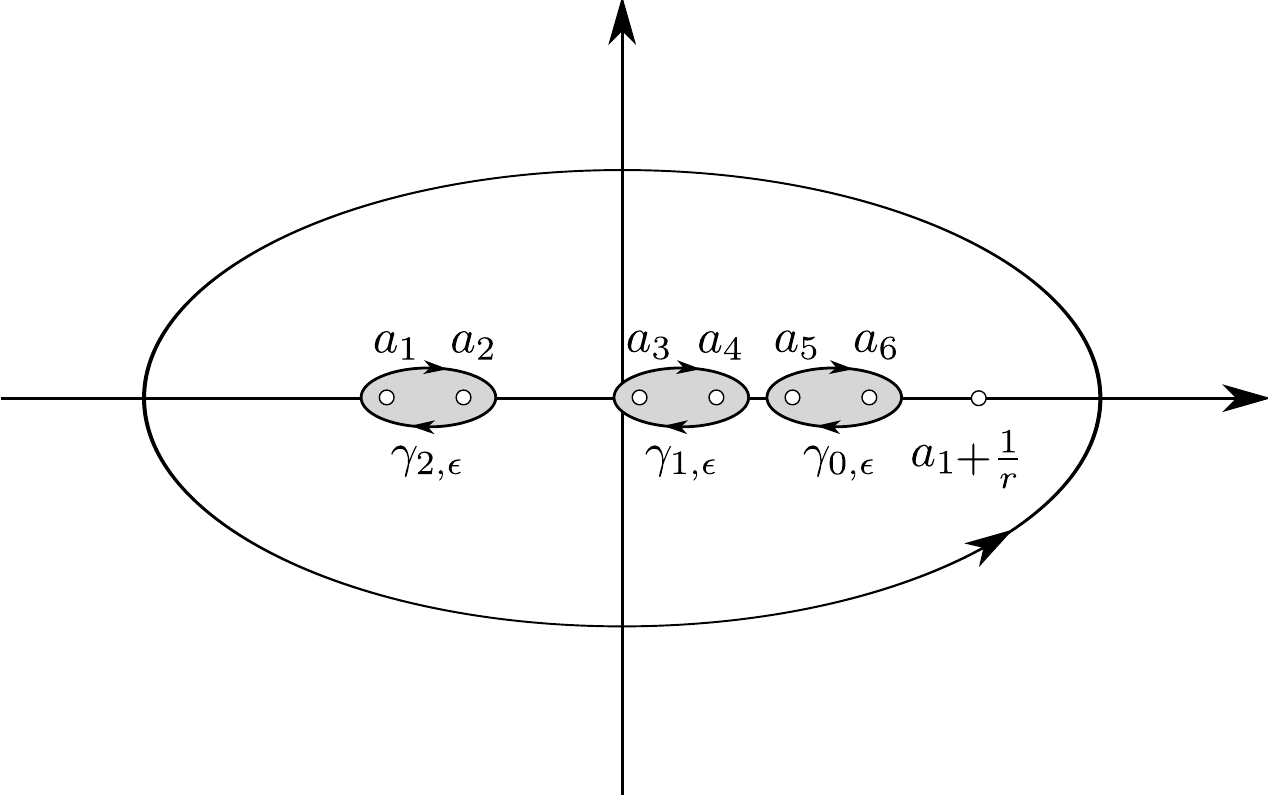}}
    \label{fig:int-even}
    \hspace{0.02\linewidth}
    \subfloat[]{\includegraphics[width=0.48\linewidth]{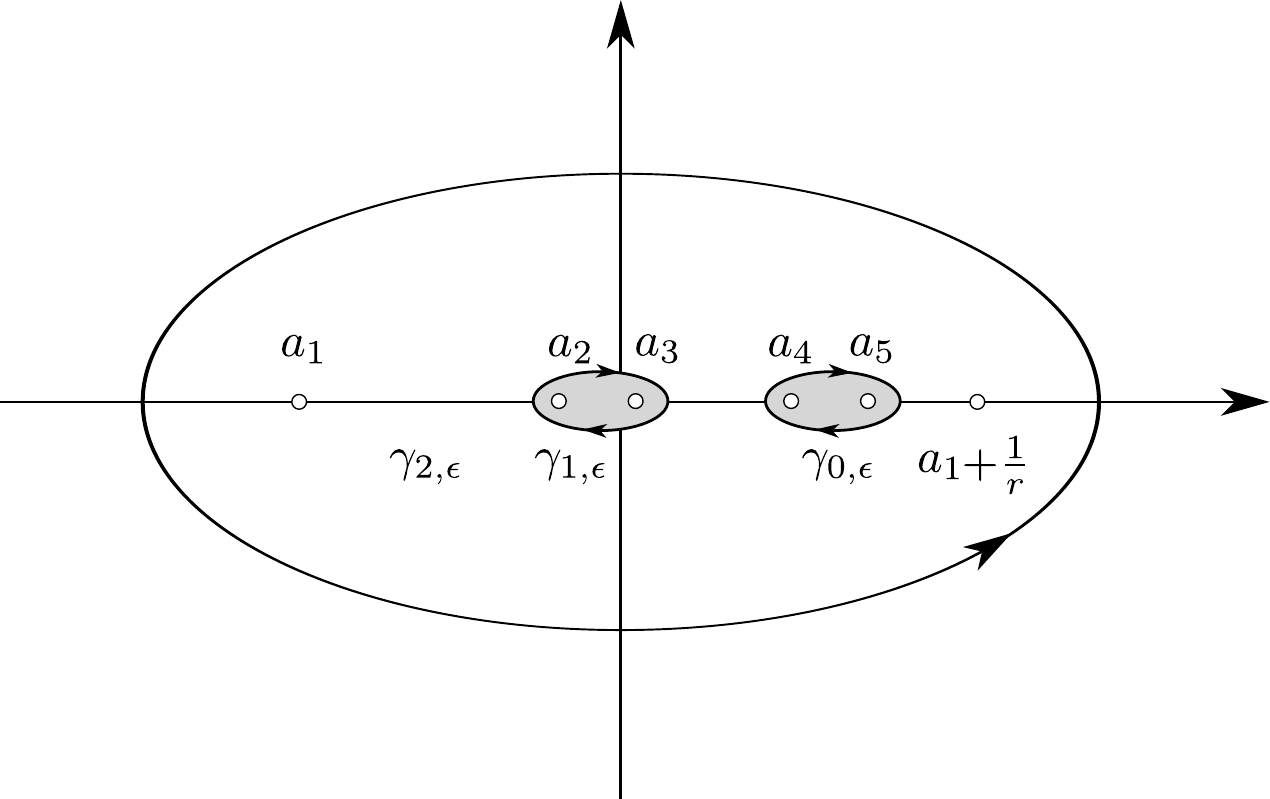}}
    \label{fig:int-odd}
    \caption{Visualization of the integration of the function $G(z)$ defined 
    in~\eqref{eqn:func-G}. Panel (a) even $N$ (here $N=6$), panel (b) odd $N$ 
    (here $N=5$).}
    \label{fig:poles}
\end{figure}
\subsection{Case of odd $N$:}
The interval with the lowest index is $\left[  a_{2},a_{3}\right]$
 and the analytic function 
$G\left(  z\right)  =\prod\limits_{j=2}^{N}\left(  z-a_{j}\right)  ^{-\frac
{1}{2}}\left(  z-a_{1}\right)  ^{\frac{N-3}{2}}\left(  1-r\left(
z-a_{1}\right)  \right)  ^{-1}$. Here $\frac{N-3}{2}$ is an integer thus
$\left(  z-a_{1}\right)  ^{\frac{N-3}{2}}$ is entire. Applying the Lemma to
$\gamma_{j,\varepsilon}$ put%
\begin{equation}
h\left(  z\right)  =\prod\limits_{m=0}^{2j-1}\left(  z-a_{N-m}\right)
^{-\frac{1}{2}}\prod\limits_{m=2j+2}^{N-2}\left(  z-a_{N-m}\right)
^{-\frac{1}{2}}\left(  z-a_{1}\right)  ^{\frac{N-3}{2}}\left(  1-r\left(
z-a_{1}\right)  \right)  ^{-1}.
\end{equation}
In this case $\theta_{m}=\pi$ for $N-2j+1\leq m\leq N$ and $\theta_{m}=0$ for
$1\leq m\leq N-2j-2$ so for $a_{N-2j-1}\leq x\leq a_{N-2j}$ we have
\begin{equation}
\begin{split}
h\left(  x\right)  = & e^{-\mathrm{i}\left(  2j\pi\right)  /2}\prod
\limits_{m=0}^{2j-1}\left(  a_{N-m}-x\right)  ^{-\frac{1}{2}} \times \\
& \times\prod
\limits_{m=2j+2}^{N-2}\left(  x-a_{N-m}\right)  ^{-\frac{1}{2}}\left(
x-a_{1}\right)  ^{\frac{N-3}{2}}\left(  1-r\left(  x-a_{1}\right)  \right)
^{-1}%
\end{split}
\end{equation}
and%
\begin{equation}
\begin{split}
&  \lim_{\varepsilon\rightarrow0_{+}}\oint\limits_{\gamma_{j,\varepsilon}%
}G\left(  z\right)  dz
=-2\mathrm{i}\left(  -1\right)  ^{j}\int_{a_{N-2j-1}}^{a_{N-2j}}%
\bigg( \prod\limits_{m=0}^{2j}\left(  a_{N-m}-x\right)  ^{-\frac{1}{2}} \\%
& \prod\limits_{m=2j+1}^{N-2}\left(  x-a_{N-m}\right)  ^{-\frac{1}{2}}\left(
x-a_{1}\right)  ^{\frac{N-3}{2}}\left(  1-r\left(  x-a_{1}\right)  \right)
^{-1} \bigg) dx.
\end{split}
\end{equation}
The residue at $z=a_{1}+\frac{1}{r}$ is straightforward:
\begin{equation}
\begin{split}
\lim_{z\rightarrow a_{1}+\frac{1}{r}}\left(  z-a_{1}-\frac{1}{r}\right)
G\left(  z\right)   &  =-\frac{1}{r}\prod\limits_{j=2}^{N}\left(  \frac{1}%
{r}-\left(  a_{j}-a_{1}\right)  \right)  ^{-\frac{1}{2}}\left(  \frac{1}%
{r}\right)  ^{\frac{N-3}{2}}\\
&  =-\prod\limits_{j=2}^{N}\left(  1-r\left(  a_{j}-a_{1}\right)  \right)
^{-\frac{1}{2}},
\end{split}
\end{equation}
because $\frac{1}{r}>a_{N}$ and the determination of the roots gives positive
values. Thus in the limit as $\varepsilon\rightarrow0_{+},R\rightarrow\infty$
we obtain%
\begin{eqnarray}
\frac{1}{2\pi\mathrm{i}}\sum_{j=0}^{\frac{N-3}{2}}\left(  -1\right)
^{j}\left(  -2\mathrm{i}\right)  \int_{a_{N-2j-1}}^{a_{N-2j}}g_{2j}\left(
x\right)  \left(  x-a_{1}\right)  ^{\frac{N}{2}-1}\left(  1-r\left(
x-a_{1}\right)  \right)  ^{-1}dx=  \nonumber \\
-\prod\limits_{j=2}^{N}\left(  1-r\left(
a_{j}-a_{1}\right)  \right)  ^{-\frac{1}{2}},
\end{eqnarray}
and this is the required result.

\subsection{Case of $N$ even}

The interval with the lowest index is $\left[  a_{1},a_{2}\right]  $ and
the function 
$G\left(  z\right)  =\prod\limits_{j=1}^{N}\left(  z-a_{j}\right)  ^{-\frac
{1}{2}}\left(  z-a_{1}\right)  ^{\frac{N-2}{2}}\left(  1-r\left(
z-a_{1}\right)  \right)  ^{-1}$. Here $\frac{N-2}{2}$ is an integer so
$\left(  z-a_{1}\right)  ^{\frac{N-2}{2}}$ is entire. Applying the Lemma to
$\gamma_{j,\varepsilon}$ put%
\begin{equation}
h\left(  z\right)  =\prod\limits_{m=0}^{2j-1}\left(  z-a_{N-m}\right)
^{-\frac{1}{2}}\prod\limits_{m=2j+2}^{N-1}\left(  z-a_{N-m}\right)
^{-\frac{1}{2}}\left(  z-a_{1}\right)  ^{\frac{N-2}{2}}\left(  1-r\left(
z-a_{1}\right)  \right)  ^{-1}.
\end{equation}
In this case $\theta_{m}=\pi$ for $N-2j+1\leq m\leq N$ and $\theta_{m}=0$ for
$1\leq m\leq N-2j-2$ so for $a_{N-2j-1}\leq x\leq a_{N-2j}$ we have
\begin{equation}
\begin{split}
h\left(  x\right) = & e^{-\mathrm{i}\left(  2j\pi\right)  /2}\prod
\limits_{m=0}^{2j-1}\left(  a_{N-m}-x\right)  ^{-\frac{1}{2}} \times \\
& \times \prod \limits_{m=2j+2}^{N-1}\left(  x-a_{N-m}\right)  
^{-\frac{1}{2}}\left( x-a_{1}\right)  ^{\frac{N-2}{2}}\left(  1-r\left(  
x-a_{1}\right)  \right)^{-1}%
\end{split}
\end{equation}
and%
\begin{equation}
\begin{split}
&  \lim_{\varepsilon\rightarrow0_{+}}\oint\limits_{\gamma_{j,\varepsilon}%
}G\left(  z\right)  dz
=-2\mathrm{i}\left(  -1\right)  ^{j}\int_{a_{N-2j-1}}^{a_{N-2j}}%
\bigg( \prod\limits_{m=0}^{2j}\left(  a_{N-m}-x\right)  ^{-\frac{1}{2}} \times 
\\%
& \times \prod\limits_{m=2j+1}^{N-1}\left(  x-a_{N-m}\right)  
^{-\frac{1}{2}}\left(
x-a_{1}\right)  ^{\frac{N-2}{2}}\left(  1-r\left(  x-a_{1}\right)  \right)
^{-1} \bigg) dx.
\end{split}
\end{equation}
The residue at $z=a_{1}+\frac{1}{r}$ is:
\begin{equation}
\begin{split}
\lim_{z\rightarrow a_{1}+\frac{1}{r}}\left(  z-a_{1}-\frac{1}{r}\right)
G\left(  z\right)   &  =-\frac{1}{r}\prod\limits_{j=1}^{N}\left(  \frac{1}%
{r}-\left(  a_{j}-a_{1}\right)  \right)  ^{-\frac{1}{2}}\left(  \frac{1}%
{r}\right)  ^{\frac{N-2}{2}}\\
&  =-\prod\limits_{j=2}^{N}
\big(  1-r\left(  a_{j}-a_{1}\right) \big)^{-\frac{1}{2}},
\end{split}
\end{equation}
because $\frac{1}{r}>a_{N}$ and the determination of the roots gives positive
values. Thus in the limit as $\varepsilon\rightarrow0_{+},R\rightarrow\infty$
we obtain the final result
\begin{eqnarray}
\frac{1}{2\pi\mathrm{i}}\sum_{j=0}^{\frac{N-2}{2}}\left(  -1\right)
^{j}\left(  -2\mathrm{i}\right)  \int_{a_{N-2j-1}}^{a_{N-2j}}g_{2j}\left(
x\right)  \left(  x-a_{1}\right)  ^{\frac{N}{2}-1}
\big(  1-r\left(x-a_{1}\right)
  \big)^{-1}ds= \nonumber \\
-\prod\limits_{j=2}^{N}\big(  1-r\left(
a_{j}-a_{1}\right)  \big)  ^{-\frac{1}{2}}.
\end{eqnarray}

For distributions supported by bounded intervals the moment generating
function determines the distribution uniquely. Thus we have established the
Theorem~\ref{th:main-theorem}.

\subsection{Examples}

There is a somewhat disguised complete elliptic integral of the first kind
which appears in $N=3,4,5$. For $b_{1}<b_{2}<b_{3}<b_{4}$ let%
\begin{equation}
E\left(  b_{1},b_{2};b_{3},b_{4}\right)  :=\frac{1}{\pi}\int_{b_{3}}^{b_{4}%
}\left\{  \left(  b_{4}-s\right)  \left(  s-b_{3}\right)  \left(
s-b_{2}\right)  \left(  s-b_{1}\right)  \right\}  ^{-\frac{1}{2}}ds.
\end{equation}
There is a hypergeometric formulation (see formula (\ref{N3&2F1}) with
$k=\frac{1}{2}$):%
\begin{equation}
E\left(  b_{1},b_{2};b_{3},b_{4}\right)  :=\frac{1}{\sqrt{\left(  b_{3}%
-b_{1}\right)  \left(  b_{4}-b_{2}\right)  }}~_{2}F_{1}\left(
\genfrac{}{}{0pt}{}{\frac{1}{2},\frac{1}{2}}{1}%
;\dfrac{\left(  b_{4}-b_{3}\right)  \left(  b_{2}-b_{1}\right)  }{\left(
b_{3}-b_{1}\right)  \left(  b_{4}-b_{2}\right)  }\right)  .
\end{equation}
Consider the density for $N=3$ and $a_{1}<a_{2}<x<a_{3}$; by formula
(\ref{oddF})%
\begin{align}
f\left(  x\right)    & =\frac{1}{2\pi}\int_{x}^{a_{3}}\left(  a_{3}-s\right)
^{-\frac{1}{2}}\prod_{j=1}^{2}\left(  s-a_{j}\right)  ^{-\frac{1}{2}}\left(
s-x\right)  ^{-\frac{1}{2}}ds \\
& =\frac{1}{2}E\left(  a_{1},a_{2};x,a_{3}\right)  .\nonumber 
\end{align}
Similarly formula (\ref{evenF}) shows that $f\left(  x\right)  =\frac{1}%
{2}E\left(  a_{1},x;a_{2},a_{3}\right)  $ for $a_{1}<x<a_{2}$.

Suppose $N=4$ and $a_{2}<x\leq a_{3}$ then by formula (\ref{evenF})%
\begin{align}
f\left(  x\right)    & =\frac{1}{\pi}\int_{a_{3}}^{a_{4}}\left(
a_{4}-s\right)  ^{-\frac{1}{2}}\prod\limits_{j=1}^{3}\left(  s-a_{j}\right)
^{-\frac{1}{2}}ds=f\left(  a_{3}\right)  \\
& =E\left(  a_{1},a_{2};a_{3},a_{4}\right) \nonumber
\end{align}
and the density is constant on this interval.

Suppose $N=5$ and $a_{3}\leq x<a_{4}$ then by formula (\ref{evenF})%
\begin{align}
f\left(  x\right)   &  =\frac{3}{2\pi}\int_{a_{4}}^{a_{5}}\left(
a_{5}-s\right)  ^{-\frac{1}{2}}\prod\limits_{j=1}^{4}\left(  s-a_{j}\right)
^{-\frac{1}{2}}\left(  s-x\right)  ^{\frac{1}{2}}ds,\\
f\left(  a_{3}\right)   &  =\frac{3}{2\pi}\int_{a_{4}}^{a_{5}}\left(
a_{5}-s\right)  ^{-\frac{1}{2}}\left(  s-a_{4}\right)  ^{-\frac{1}{2}}%
\prod\limits_{j=1}^{2}\left(  s-a_{j}\right)  ^{-\frac{1}{2}}ds\\\nonumber
&  =\frac{3}{2}E\left(  a_{1},a_{2};a_{4},a_{5}\right),
\end{align}
which is independent of $a_{3}$.

The integrals in the density formula have the form \linebreak 
$\frac{1}{\pi}\int_{a}^{b}h\left(s\right)  \big( \left(  b-s\right)
\left(  s-a\right)  \big)  ^{-\frac{1}{2}}ds$, where $h$ is differentiable
in a neighborhood of $\left[  a,b\right]  $. The technique of Gauss-Chebyshev
quadrature is well suited for the numerical evaluation of the desired 
densities: Set $\phi\left(  t\right)
=\frac{1}{2}\left(  a+b\right)  +\frac{1}{2}\left(  b-a\right)  t$ then the
sums $\frac{1}{n}\sum_{j=0}^{n-1}h\left(  \phi\left(  \cos\frac{\left(
2j+1\right)  \pi}{2n}\right)  \right)  $ converge rapidly to the integral (as
$n\rightarrow\infty$); typically $n=20$ suffices for reasonable accuracy.

Another way of numerical approximation of a real numerical shadow can be done
by direct numerical integration of a formula for a cumulative distribution
function given in~\cite{provost2000distribution}. 

\begin{figure}[!h]
    \centering
    \subfloat[]{\includegraphics{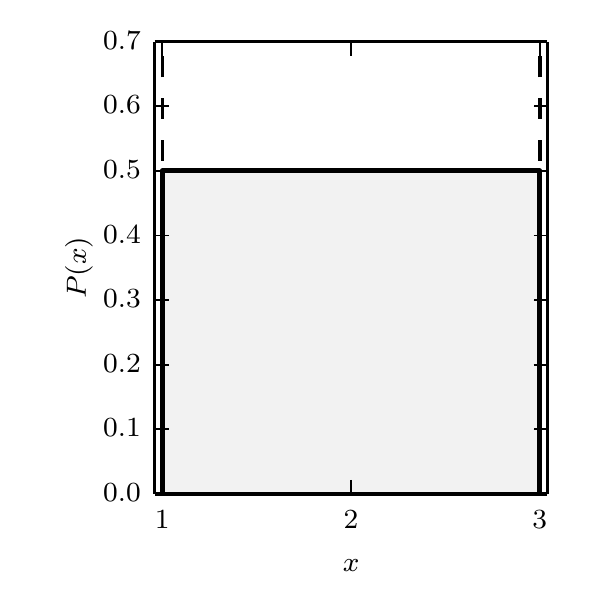}} 
    \subfloat[]{\includegraphics{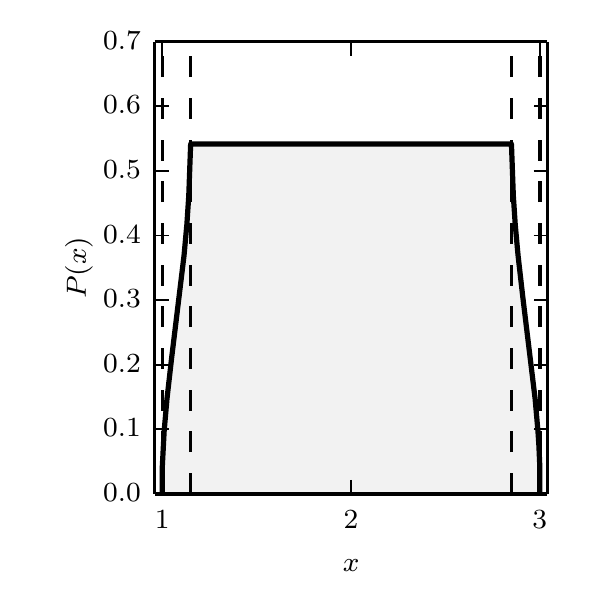}}\\
    \subfloat[]{\includegraphics{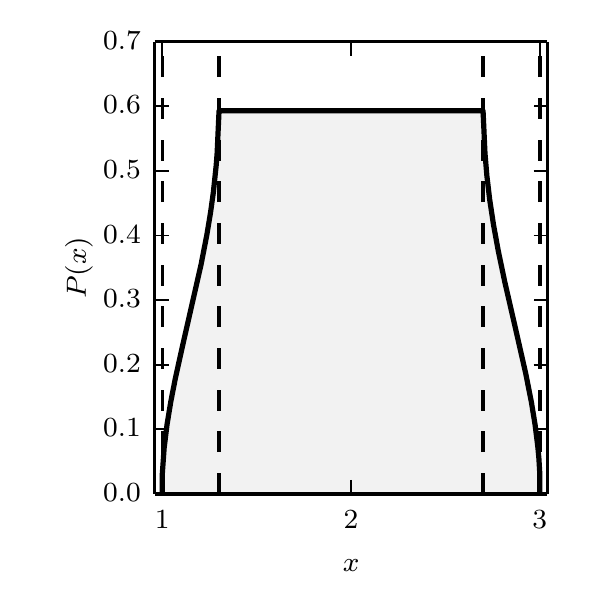}} 
    \subfloat[]{\includegraphics{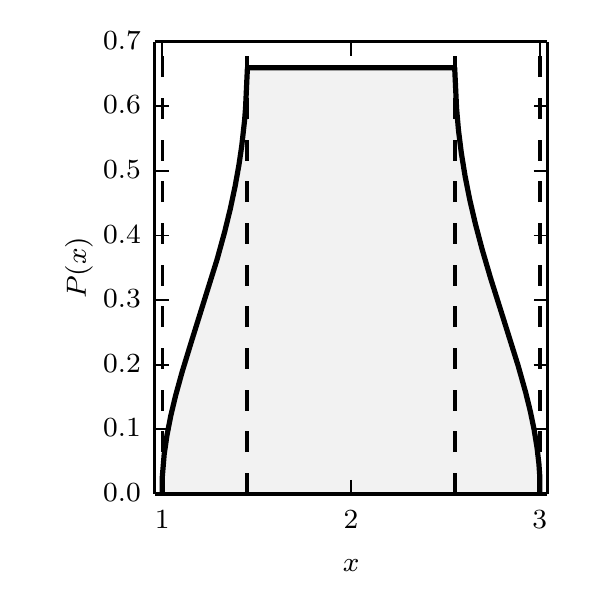}}
    \caption{Real numerical shadow 
$\mathcal{P}^{\mathbb{R}}_A(x)$
of a diagonal matrix   $A=\mathrm{diag}(1,1+\epsilon,3-\epsilon,3)$
of order $N=4$, where  (a) $\epsilon=0$
(degenerated case: the real shadow is equivalent to 
the complex shadow of the reduced matrix, 
${\rm Tr}_2 A=\mathrm{diag}(1,3)$);
 (b) $\epsilon=0.15$; (c) $\epsilon=0.3$ and (d) $\epsilon=0.45$.
Note that for  $x\in(a_2,a_3)$ all distributions are flat.
}
    \label{fig:cplx2realD4}
\end{figure}

\begin{figure}[!h]
    \centering
    \subfloat[]{\includegraphics{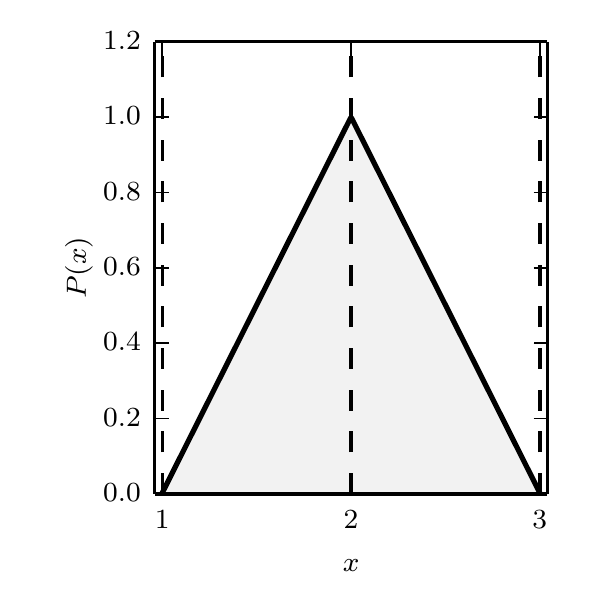}} 
    \subfloat[]{\includegraphics{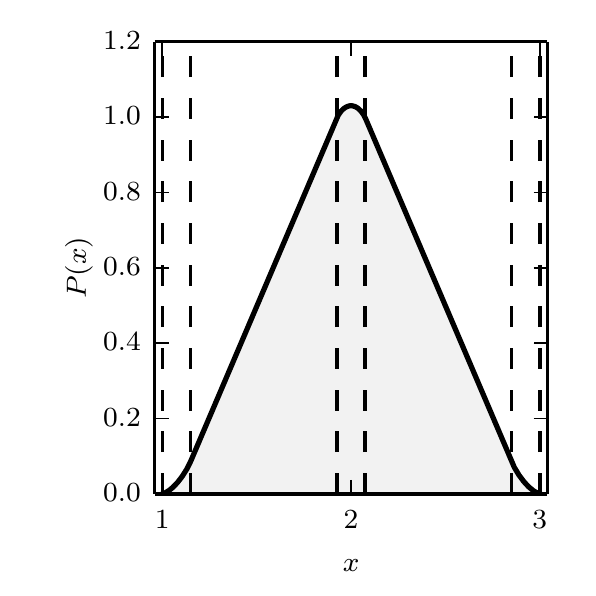}}\\
    \subfloat[]{\includegraphics{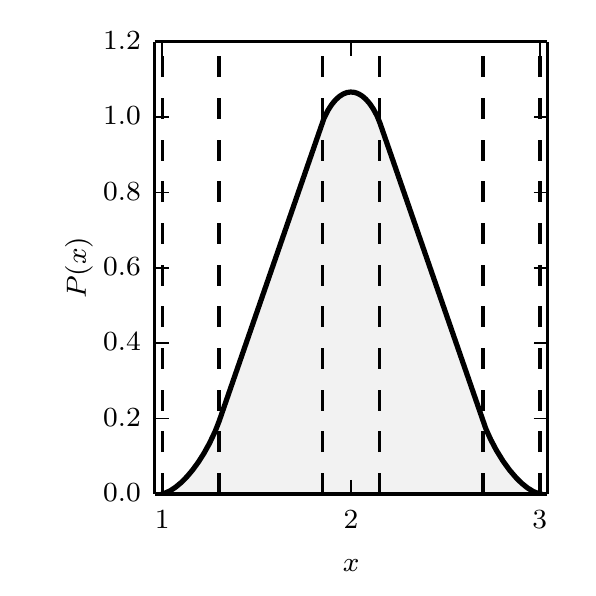}} 
    \subfloat[]{\includegraphics{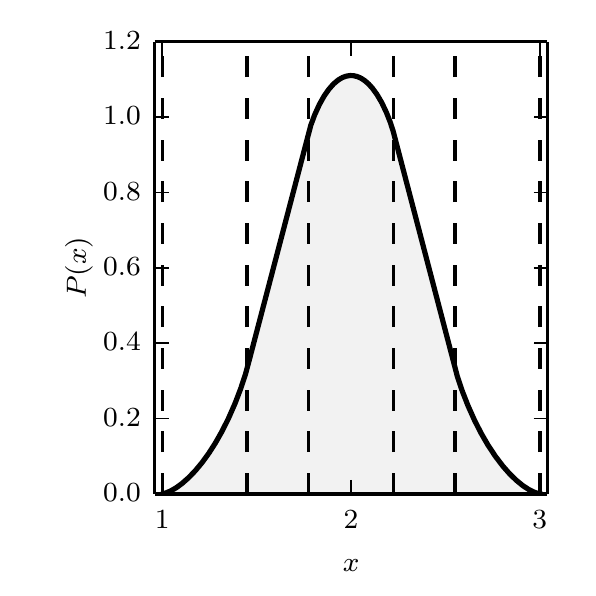}}
    \caption{Real numerical shadow of matrix $A$ of order $N=6$ 
 $A=\mathrm{diag}(1,1+\epsilon,2-\epsilon/2,2+\epsilon/2, 3-\epsilon,3)$,
 where (a) $\epsilon=0$
(degenerated case: the real shadow is equivalent to 
the complex shadow of the reduced matrix, 
${\rm Tr}_2 A=\mathrm{diag}(1,2,3)$);
  (b) $\epsilon=0.15$; (c) $\epsilon=0.3$; 
(d) $\epsilon=0.45$. Note that for  $x\in(a_2,a_3)$ and 
 $x\in(a_4,a_5)$ the distributions are linear.
}
    \label{fig:cplx2realD6}
\end{figure}

\section{Continuity at the knots}\label{sec:knots}
In this section we examine the behavior of the shadow density at the knots,
where the curve pieces meet, that is, the regular singular points of the
shadow differential equation. The even and odd $N$ cases are quite different.
For even $N$ there are even and odd segments based on counting from $a_{N}$,
so $\left[  a_{N-1},a_{N}\right]  $ is \#1, and this parity is the same if one
counts up from $a_{1}$. For odd $N$ there are even and odd knots (the parity
of $j$ for the knot $a_{j}$; this remains the same under the transformation
$x\mapsto-x$). In the neighborhood of each knot $a_{j}$ there is the analytic
part, expandable in a power series $\sum_{n=0}^{\infty}c_{n}\left(
x-a_{j}\right)  ^{n}$,
 and a part with discontinuous derivative of order
$\left\lfloor \frac{N}{2}\right\rfloor -1$, as will be shown.
For even $N=2M$
the density is polynomial of degree $M-2$ in $x$ in the even intervals
$\left[  a_{N-2j},a_{N-2j+1}\right]  $, and has a jump of the form $\left\vert
x-a_{i}\right\vert ^{M-\frac{3}{2}}$ at each end on the odd intervals $\left[
a_{N-2j-1},a_{N-2j}\right]  $. Recall that the critical exponents are
$\frac{N-1}{2}-1=M-\frac{3}{2}$ and $0,1,\ldots,N-3$.

For odd $N=2M+1$ there is just one type of curve piece: behavior like
$\left\vert x-a_{i}\right\vert ^{M-1}$ at the odd end-point and $\left\vert
x-a_{i}\right\vert ^{M-1}\log\left\vert x-a_{i}\right\vert $ at the even
end-point. The critical exponent $M-1=\frac{N-1}{2}-1$ is repeated, accounting
for the logarithmic term. In this case each interval can be considered as even
or odd by starting from $a_{N}$ or from $a_{1}$ and using 
the transformation $x\mapsto -x$.

For $a_{N-2m-1}\leq x<a_{N-2m}$ we have
\begin{align}
f\left(  x\right)   &  =\frac{N-2}{2\pi}\sum_{j=0}^{m-1}\left(  -1\right)
^{j}\int_{a_{N-2j-1}}^{a_{N-2j}}g_{j}\left(  s\right)  \left(  s-x\right)
^{\frac{N}{2}-2}ds\\
&  +\frac{N-2}{2\pi}\left(  -1\right)  ^{m}\int_{x}^{a_{N-2m}}g_{m}\left(
s\right)  \left(  s-x\right)  ^{\frac{N}{2}-2}ds. \nonumber
\end{align}
The integral indexed by $j$ is analytic in $x<a_{N-2j-1}$. Furthermore if
$N=2M$ then the sum defines a polynomial in $x$ without any further
restrictions on $x$. Consider the integral in the second line for
$a_{N-2m}-\varepsilon<x<a_{N-2m}$ for some small $\varepsilon>0$ (and
$<a_{N-2m}-a_{N-2m-1}$). Set $h_{m}\left(  s\right)  =g_{m}\left(  s\right)
\left(  a_{N-2m}-s\right)  ^{\frac{1}{2}}$ so that $h_{m}\left(  s\right)  $
has a power series expansion $\sum_{n=0}^{\infty}c_{n}\left(  a_{N-2m}%
-s\right)  ^{n}$ valid in a neighborhood of $\left[  a_{N-2m}-r,a_{N-2m}%
\right]  $ for small enough $r$. Then%
\begin{gather}
\int_{x}^{a_{N-2m}}g_{m}\left(  s\right)  \left(  s-x\right)  ^{\frac{N}{2}%
-2}ds=\int_{x}^{a_{N-2m}}\left(  a_{N-2m}-s\right)  ^{-\frac{1}{2}}\left(
s-x\right)  ^{\frac{N}{2}-2}h_{m}\left(  s\right)  ds\\ \nonumber
=\sum_{n=0}^{\infty}c_{n}\int_{x}^{a_{N-2m}}\left(  a_{N-2m}-s\right)
^{-\frac{1}{2}+n}\left(  s-x\right)  ^{\frac{N}{2}-2}ds\\ \nonumber
=\left(  a_{N-2m}-x\right)  ^{\frac{N-3}{2}}\sum_{n=0}^{\infty}B\left(
n+\frac{1}{2},\frac{N-2}{2}\right)  c_{n}\left(  a_{N-2m}-x\right)  ^{n}\\
=B\left(  \frac{1}{2},\frac{N-2}{2}\right)  \left(  a_{N-2m}-x\right)
^{\frac{N-3}{2}}\sum_{n=0}^{\infty}\frac{\left(  \frac{1}{2}\right)  _{n}%
}{\left(  \frac{N-1}{2}\right)  _{n}}c_{n}\left(  a_{N-2m}-x\right)  ^{n}; \nonumber
\end{gather}
this is the solution of the shadow equation for the critical exponent
$\frac{N-3}{2}$ at the regular singular point $a_{N-2m}$. The leading term is%
\begin{equation}
\frac{\left(  -1\right)  ^{m}}{B\left(  \frac{N-1}{2},\frac{1}{2}\right)
}\prod\limits_{j=0}^{2m-1}\left(  a_{N-j}-a_{N-2m}\right)  ^{-\frac{1}{2}%
}\prod\limits_{j=2m+1}^{N-1}\left(  a_{N-2m}-a_{N-j}\right)  ^{-\frac{1}{2}%
}\left(  a_{N-2m}-x\right)  ^{\frac{N-3}{2}},
\end{equation}
in analogy  to the leading term in formula (\ref{topF}). 
The computation uses an identity,
$\frac{1}{\pi}\left(  \frac{N}{2}-1\right)  \frac{\Gamma\left(  \frac{1}%
{2}\right)  \Gamma\left(  \frac{N-2}{2}\right)  }{\Gamma\left(  \frac{N-1}%
{2}\right)  }=\frac{\Gamma\left(  \frac{N}{2}\right)  }{\Gamma\left(
\frac{N-1}{2}\right)  \Gamma\left(  \frac{1}{2}\right)  }$.

\subsection{Even $N$}

Set $N=2M$. Near a knot $a_{2M-2m}$ ($0\leq m<M$) the density $f\left(
x\right)  $ is polynomial for $a_{2M-2m}<x$ and given by the sum of the
polynomial and a series $\left(  a_{2M-2m}-x\right)  ^{M-\frac{3}{2}}%
\sum_{n=0}^{\infty}c_{n}\left(  a_{2M-2m}-x\right)  ^{n}$ for $x<a_{2M-2m}$.
Thus $f^{\left(  j\right)  }\left(  x\right)  $ is continuous in a
neighborhood of $a_{2M-2m}$ for $0\leq j\leq M-2$. By applying this result to
the reversed knots $b_{1}<\ldots<b_{N}$ where $b_{j}=-a_{2M+1-j}$ and $x$
replaced by $-x$ we find that near a knot $a_{2M-2m+1}=-b_{2M-2\left(
M-m\right)  }$ for $1\leq m\leq M$ the density $f\left(  x\right)$ 
is polynomial for
$x<a_{2M-2m+1}$ and is given by the sum of the polynomial and a series
$\left(  x-a_{2M-2m+1}\right)  ^{M-\frac{3}{2}}\sum_{n=0}^{\infty}c_{n}\left(
x-a_{2M-2m+1}\right)  ^{n}$ for $x>a_{2M-2m+1}$. Thus the lowest order
discontinuity of the density is in $f^{\left(  M-1\right)  }\left(  x\right)
$ at each knot, that is, $f^{\left(  j\right)  }\left(  x\right)  $ is
continuous everywhere for all $j\leq\frac{N}{2}-2$.

\subsection{Odd $N$}

Set $N=2M+1$. Consider the even knot $a_{N-2m-1}$ with $0\leq m<M-1$. Pick $r$
with $0<r<\min\left(  a_{N-2m}-a_{N-2m-1},a_{N-2m-1}-a_{N-2m-2}\right)  $,
then in the interval $\left[  a_{N-2m-1}-r,a_{N-2m-1}+r\right]  $ the function 
\linebreak
$\prod\limits_{j=0}^{2m}\left(  a_{N-j}-s\right)  ^{-\frac{1}{2}}%
\prod\limits_{j=2m-2}^{N-1}\left(  s-a_{N-j}\right)  ^{-\frac{1}{2}}$ can be
expanded as a power series \linebreak
$\sum_{n=0}^{\infty}c_{n}\left(  s-a_{N-2m-1}%
\right)  ^{n}$. Here the coefficients can be found by using the negative binomial
theorem for each factor in the product.
 Since all the difficulty happens at
the knot set%
\begin{align}
\phi\left(  x\right)   &  =\frac{N-2}{2\pi}\sum_{j=0}^{m-1}\left(  -1\right)
^{j}\int_{a_{N-2j-1}}^{a_{N-2j}}g_{j}\left(  s\right)  \left(  s-x\right)
^{\frac{N}{2}-2}ds\\
&  +\frac{N-2}{2\pi}\left(  -1\right)  ^{m}\int_{a_{N-2m-1}+r}^{a_{N-2m}}%
g_{m}\left(  s\right)  \left(  s-x\right)  ^{\frac{N}{2}-2}ds. \nonumber
\end{align}
Thus $\phi\left(  x\right)  $ is analytic for $x<a_{N-2m-1}+r$; for
$a_{N-2m-1}<x<a_{N-2m-1}+r$%
\begin{equation}
f\left(  x\right)  =\phi\left(  x\right)  +\frac{N-2}{2\pi}\left(  -1\right)
^{m}\int_{x}^{a_{N-2m-1}+r}g_{m}\left(  s\right)  \left(  s-x\right)
^{M-\frac{3}{2}}ds,
\end{equation}
and for $a_{N-2m-1}-r<x<a_{N-2m-1}$%
\begin{equation}
f\left(  x\right)  =\phi\left(  x\right)  +\frac{N-2}{2\pi}\left(  -1\right)
^{m}\int_{a_{N-2m-1}}^{a_{N-2m-1}+r}g_{m}\left(  s\right)  \left(  s-x\right)
^{M-\frac{3}{2}}ds.
\end{equation}
By using the power series and the change of variable $x=a_{N-2m-1}+y$ and
$s=t+a_{N-2m-1}$the first integral becomes%
\begin{equation}
\sum_{n=0}^{\infty}c_{n}\int_{y}^{r}t^{n-\frac{1}{2}}\left(  t-y\right)
^{M-\frac{3}{2}}dt,0<y<r,
\end{equation}
and the second integral becomes%
\begin{equation}
\sum_{n=0}^{\infty}c_{n}\int_{0}^{r}t^{n-\frac{1}{2}}\left(  t-y\right)
^{M-\frac{3}{2}}dt,-r<y<0.
\end{equation}
We want to analyze the behavior of the integrals in the limit $y\rightarrow0$.
In each integral change the variable $t=\frac{y}{1-u^{2}}$, so $dt=\frac
{2yu}{\left(  1-u^{2}\right)  ^{2}}du$. Furthermore $t\left(  t-y\right)
=\frac{y^{2}u^{2}}{\left(  1-u^{2}\right)  ^{2}}$ thus $\frac{1}%
{\sqrt{t\left(  t-y\right)  }}=\frac{1-u^{2}}{yu}$ (if $y<0$ then $u^{2}>1$,
and if $y>0$ then $u^{2}<1$ so that this is the positive root). Set
$u_{r}=\sqrt{\frac{r-y}{r}}$. For $0<y<r$ the integral is
\begin{equation}
2y^{n+M-1}\int_{0}^{u_{r}}\frac{u^{2M-2}}{\left(  1-u^{2}\right)  ^{n+M}%
}du,~0<u_{r}<1,
\end{equation}
and for $-r<y<0$ the integral is
\begin{equation}
2y^{n+M-1}\int_{\infty}^{u_{r}}\frac{u^{2M-2}}{\left(  1-u^{2}\right)  ^{n+M}%
}du,~u_{r}>1.
\end{equation}
Because the integrand is even we deduce that the partial fraction expansion is
of the form%
\begin{equation}
\frac{u^{2M-2}}{\left(  1-u^{2}\right)  ^{n+M}}=\sum_{j=1}^{n+M}\beta
_{j}\left(  M,n\right)  \left\{  \frac{1}{\left(  1-u\right)  ^{j}}+\frac
{1}{\left(  1+u\right)  ^{j}}\right\}  ,
\end{equation}
for certain constants $\beta_{j}\left(  M,n\right)  $. Thus%
\begin{equation}
\begin{split}
I_{M,n}\left(  u\right)  := & \int\frac{u^{2M-2}}{\left(  1-u^{2}\right)  
^{n+M}%
}du=\beta_{1}\left(  M,n\right)  \log\left\vert \frac{1+u}{1-u}\right\vert + \\
& +\sum_{j=2}^{n+M}\frac{\beta_{j}\left(  M,n\right)  }{j-1}\left\{  \frac
{1}{\left(  1-u\right)  ^{j-1}}-\frac{1}{\left(  1+u\right)  ^{j-1}}\right\}
\end{split}.
\end{equation}
This antiderivative $I_{M,n}$ vanishes at $u=0$ and at $u=\infty$, thus both
integrals have the same value $2y^{n+M-1}I_{M,n}\left(  \sqrt{\frac{r-y}{r}%
}\right)  $. The terms for $2\leq j\leq n+M$ contribute%
\begin{equation}
\frac{4y^{n+M-1}u_{r}}{\left(  1-u_{r}^{2}\right)  ^{j-1}}\sum_{i=0}%
^{\left\lfloor \left(  j-2\right)  /2\right\rfloor }\binom{j-1}{2i+1}%
u_{r}^{2i}=4y^{n+M-j}\sqrt{\frac{r-y}{r}}\sum_{i=0}^{\left\lfloor \left(
j-2\right)  /2\right\rfloor }\binom{j-1}{2i+1}\left(  r-y\right)
^{i}r^{j-1-i},
\end{equation}
which is analytic in $y$ for $-r<y<r$. So all the singular behavior stems from
the logarithmic term%
\begin{equation}
2y^{n+M-1}\beta_{1}\left(  M,n\right)  \log\left\vert \frac{1+u_{r}}{1-u_{r}%
}\right\vert ,
\end{equation}
and%
\begin{align}
\log\left\vert \frac{1+u_{r}}{1-u_{r}}\right\vert  &  =-\log\left\vert
1-u_{r}^{2}\right\vert +2\log\left\vert 1+u_{r}\right\vert \\
&  =-\log\left\vert y\right\vert +\log r+2\log(1+\sqrt{\frac{r-y}{r}}). \nonumber
\end{align}
Collecting the relevant terms we see that for $a_{N-2m-1}-r<x<a_{N-2m-1}+r$
the density $f\left(  x\right)  $ is the sum of an analytic part and%
\begin{equation}
\begin{split}
& \frac{2}{\pi}\left(  -1\right)  ^{m-1}\left(  2M-1\right)  \log\left\vert
x-a_{N-2m-1}\right\vert \times \\
& \times \left(  x-a_{N-2m-1}\right)  ^{M-1}\sum_{n=0}^{\infty
}\beta_{1}\left(  M,n\right)  c_{n}\left(  x-a_{N-2m-1}\right)  ^{n}.
\end{split}
\end{equation}

The coefficients $\beta_{j}\left(  M,n\right)  $ can be found explicitly as
sums but we are only concerned with $\beta_{1}\left(  M,n\right)  $. Indeed
(proof left for reader)%
\begin{equation}
\beta_{1}\left(  M,n\right)  =\frac{1}{2}\left(  -1\right)  ^{M-1}%
\frac{\left(  \frac{1}{2}\right)  _{M-1}\left(  \frac{1}{2}\right)  _{n}%
}{\left(  n+M-1\right)  !}.
\end{equation}
Thus we analyzed the behavior at the even knots and showed that $f^{\left(
j\right)  }\left(  x\right)  $ is continuous everywhere for all $j\leq
M-2=\frac{N-1}{2}-2$.

\section{Entangled shadow}\label{sec:entangled-shadow}
In previous sections we investigated the shadow with respect to
real states. Here we discuss another example of the restricted shadow --
the shadow with respect to maximally entangled states,
 briefly called {\sl entangled shadow}.

\medskip 
\subsection{Entangled shadow of $4 \times 4$ matrices with direct sum 
structure}

We shall start recalling the definition of the entangled shadow 
introduced in \cite{shadow3}.
\begin{definition}
Maximally entangled numerical shadow of a matrix $A$ of size $N=N_1\times N_2$ 
is defined as a probability distribution $\mathcal{P}^{\mathrm{ent}}_A(z)$ on the complex plane.
\begin{equation}
\mathcal{P}_A^{\mathrm{ent}}(z) := \int {\mathrm d} \mu(\psi) \delta\Bigl( z-\langle 
\psi|A|\psi\rangle\Bigr),
\end{equation}
where $\mu(\psi)$ denotes the unique unitarily invariant (Fubini-Study) 
measure on the set of complex pure states restricted to the set of bi-partite maximally entangled states
\begin{equation}
\left\{\ket{\psi} \in \mathbb{C}^{N_1\times N_2}:
\ket{\psi}=\frac{1}{\sqrt{N_{\min}}} (U_1\otimes U_2)\sum_{i=1}^{N_{\min}} 
\ket{\psi_i^1}\otimes \ket{\psi_i^2} \right\}.
\end{equation}
Here  $N_{\min}=\min(N_1,N_2)$, 
while
$\ket{\psi_i^1}$, $\ket{\psi_i^2}$ form orthonormal bases in 
$\mathbb{C}^{N_1}$ and $\mathbb{C}^{N_2}$ respectively,
while $U_1\in SU(N_1)$ and $U_2\in SU(N_2)$.
\end{definition}

\begin{definition}
Pauli matrices $\sigma_x$, $\sigma_y$ and $\sigma_z$ are defined as
\begin{equation}
\sigma_x = 
\begin{pmatrix}
0 & 1 \\
1 & 0
\end{pmatrix},
\;
\sigma_y = 
\begin{pmatrix}
0 & -\ii \\
\ii & 0
\end{pmatrix},
\;
\sigma_z = 
\begin{pmatrix}
1 & 0 \\
0 & -1
\end{pmatrix}.
\end{equation}
\end{definition}

\begin{lemma}
For a $2 \times 2$ unitary matrix $U$ and an {\sl arbitrary} $2 
\times 2$ matrix $A$ we have
\begin{equation}
\bra{1} U^{\dagger} A U \ket{1} = \bra{0} U^{\dagger} \sigma_y A^\mathrm{T} 
\sigma_y U \ket{0},
\end{equation}
\end{lemma}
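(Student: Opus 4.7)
The plan is to rewrite both sides as sandwiches of operators between $\bra{0}$ and $\ket{0}$, and then appeal to the classical $2\times 2$ determinant identity $U\sigma_y U^{\mathrm{T}} = (\det U)\,\sigma_y$, which holds for every $2\times 2$ matrix $U$.

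First I would record the elementary relations $\sigma_y\ket{0} = \ii\ket{1}$, so $\ket{1} = -\ii\,\sigma_y\ket{0}$ and $\bra{1} = \ii\,\bra{0}\sigma_y$. Second, since $\bra{1} U^{\dagger} A U\ket{1}$ is a scalar it coincides with its own transpose, which gives
\begin{equation*}
\bra{1} U^{\dagger} A U\ket{1} = \bra{1}\bigl(U^{\dagger} A U\bigr)^{\mathrm{T}}\ket{1} = \bra{1} U^{\mathrm{T}} A^{\mathrm{T}} \overline{U}\ket{1}.
\end{equation*}
Substituting the expressions for $\ket{1}$ and $\bra{1}$, the factors $\pm\ii$ cancel and the left-hand side of the lemma becomes $\bra{0}\sigma_y U^{\mathrm{T}} A^{\mathrm{T}} \overline{U}\sigma_y\ket{0}$.

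Third, I would invoke $U\sigma_y U^{\mathrm{T}} = (\det U)\,\sigma_y$. Left-multiplying by $U^{\dagger}$ and using unitarity rearranges this to $\sigma_y U^{\mathrm{T}} = (\det U)\,U^{\dagger}\sigma_y$. Taking the entrywise complex conjugate of the same identity (using $\overline{\sigma_y} = -\sigma_y$) and invoking the relation $\overline{U}^{\mathrm{T}} U = I$ yields $\overline{U}\,\sigma_y = \overline{\det U}\,\sigma_y U$. Plugging both substitutions into the expression from the previous paragraph produces the scalar factor $(\det U)\overline{\det U} = |\det U|^{2} = 1$, with residual operator $U^{\dagger}\sigma_y A^{\mathrm{T}}\sigma_y U$ sandwiched between $\bra{0}$ and $\ket{0}$. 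That is exactly the right-hand side.

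The main subtle point is the bookkeeping of the determinant phase: each of the two substitutions generates a phase factor, and their cancellation relies precisely on $|\det U|=1$, which is where unitarity enters (for $U\in \mathrm{GL}(2,\mathbb{C})$ the lemma would acquire a factor $|\det U|^{2}$). A purely computational alternative would be to expand both sides in the entries $u_{ij}$ of $U$ and $a_{ij}$ of $A$, and to match the four coefficients of $a_{00}, a_{01}, a_{10}, a_{11}$ separately; the four resulting identities are exactly the orthonormality relations encoded in $U^{\dagger}U = UU^{\dagger} = I$.
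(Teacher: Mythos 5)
Your proof is correct, and in fact the paper states this lemma without any proof at all, so there is nothing to compare against. All the ingredients check out: $\sigma_y\ket{0}=\ii\ket{1}$ gives the correct translation between the two matrix elements, the transpose-of-a-scalar step is valid because $\ket{1}$ has real entries (so $\overline{\ket{1}}=\ket{1}$ and the transpose of $v^{\dagger}Mv$ really is $\bra{1}M^{\mathrm{T}}\ket{1}$), the identity $U\sigma_y U^{\mathrm{T}}=(\det U)\,\sigma_y$ holds for every $2\times 2$ matrix, and the two determinant phases cancel by $|\det U|=1$. Your closing remark correctly isolates where unitarity is used. A slightly more compact packaging of the same idea: $\sigma_y A^{\mathrm{T}}\sigma_y=\operatorname{adj}(A)$ for any $2\times 2$ matrix $A$, the adjugate is an antihomomorphism with $\operatorname{adj}(U)=(\det U)U^{\dagger}$ for unitary $U$, hence $U^{\dagger}\operatorname{adj}(A)U=\operatorname{adj}(U^{\dagger}AU)$, and the $(0,0)$ entry of $\operatorname{adj}(M)$ equals the $(1,1)$ entry of $M$ — but this is the same argument in different clothing, not a genuinely different route.
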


\begin{theorem}
Maximally entangled shadow
\begin{equation}
\mathcal{P}^{\mathrm{ent}}_{A \oplus B} = \mathcal{P}_{\frac12 (A + \sigma_y B^\mathrm{T} \sigma_y)},
\end{equation}
where $A\oplus B$ denotes block matrix.
\end{theorem}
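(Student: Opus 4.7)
The plan is to reduce the bipartite expectation $\bra{\psi}A\oplus B\ket{\psi}$ against a maximally entangled state in $\mathbb{C}^{2}\otimes\mathbb{C}^{2}$ to an ordinary expectation $\bra{u}C\ket{u}$ against a Fubini--Study uniform pure state $u$ in $\mathbb{C}^{2}$, and then appeal to the definition of the standard (complex) numerical shadow. First I would identify $\mathbb{C}^{4}=\mathbb{C}^{2}\otimes\mathbb{C}^{2}$ in the product basis $\{\ket{ij}\}$ so that
\begin{equation*}
A\oplus B=\ketbra{0}{0}\otimes A+\ketbra{1}{1}\otimes B,
\end{equation*}
and parameterize a generic maximally entangled state as $\ket{\psi}=\tfrac{1}{\sqrt{2}}(U_{1}\otimes U_{2})(\ket{00}+\ket{11})$ with $U_{1},U_{2}\in SU(2)$, as allowed by the definition of $\mathcal{P}^{\mathrm{ent}}$.

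Next I would expand $\bra{\psi}A\oplus B\ket{\psi}$ and contract the two summation indices using the ``ricochet'' identity $\sum_{j}(U_{1})_{kj}\,U_{2}\ket{j}=U_{2}U_{1}^{\mathrm{T}}\ket{k}$ (or equivalently $(U_{1}\otimes U_{2})(\ket{00}+\ket{11})=(U_{1}U_{2}^{\mathrm{T}}\otimes\Id)(\ket{00}+\ket{11})$). Introducing the single random unitary $W:=U_{2}U_{1}^{\mathrm{T}}\in SU(2)$, this collapses the expectation to
\begin{equation*}
\bra{\psi}A\oplus B\ket{\psi}=\tfrac{1}{2}\bra{0}W^{\dagger}AW\ket{0}+\tfrac{1}{2}\bra{1}W^{\dagger}BW\ket{1}.
\end{equation*}
Applying the preceding lemma with $U\leftarrow W$ and $A\leftarrow B$ to the second summand rewrites it as $\bra{0}W^{\dagger}\sigma_{y}B^{\mathrm{T}}\sigma_{y}W\ket{0}$, so that $\bra{\psi}A\oplus B\ket{\psi}=\bra{0}W^{\dagger}C\,W\ket{0}$ with $C=\tfrac{1}{2}(A+\sigma_{y}B^{\mathrm{T}}\sigma_{y})$.

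It remains to identify the resulting push-forward measure. Transposition is a Haar-preserving antiautomorphism of $SU(2)$, and the product of two independent Haar-distributed random variables on a compact group is again Haar distributed, so $W=U_{2}U_{1}^{\mathrm{T}}$ is Haar on $SU(2)$; since $SU(2)$ acts transitively on the unit sphere of $\mathbb{C}^{2}$ with unique invariant probability measure, $W\ket{0}$ is then Fubini--Study uniform on pure states in $\mathbb{C}^{2}$. By definition this identifies the distribution of $\bra{\psi}A\oplus B\ket{\psi}$ with the standard numerical shadow $\mathcal{P}_{C}$, giving the theorem. I expect the main obstacle to be not the algebra but the bookkeeping in the ricochet step: one must be careful to obtain $W$ in a form that is genuinely Haar on $SU(2)$ (as opposed to some related conjugate such as $U_{2}U_{1}^{\dagger}$), since an incorrect transposition would trade $\sigma_{y}B^{\mathrm{T}}\sigma_{y}$ for $B$ itself and destroy the claimed identity.
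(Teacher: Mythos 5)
Your proposal is correct and follows essentially the same route as the paper: reduce the entangled state to a single unitary acting on one factor, split $\bra{\psi}(A\oplus B)\ket{\psi}$ into $\tfrac12\bra{0}U^{\dagger}AU\ket{0}+\tfrac12\bra{1}U^{\dagger}BU\ket{1}$, apply the $\sigma_y$ lemma to the second term, and identify the pushforward of $U\ket{0}$ with the standard shadow. The only difference is that you spell out the ricochet reduction $(U_1\otimes U_2)\ket{\psi_+}=(\Id\otimes U_2U_1^{\mathrm T})\ket{\psi_+}$ and the Haar-invariance of $W=U_2U_1^{\mathrm T}$, which the paper takes for granted by starting directly from $(\Id\otimes U)\ket{\psi_+}$.
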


\begin{proof}
We write
\begin{equation}
\begin{split}
\bra{\psi} (A \oplus B) \ket{\psi} &= 
\bra{\psi_+} (\1 \otimes U^{\dagger}) (A \oplus B) (\1 \otimes U)  
\ket{\psi_+} \\
&=\bra{\psi_+} (U^{\dagger} \oplus U^{\dagger}) (A \oplus B) (U \oplus U)  
\ket{\psi_+} \\
&= \frac12 \left(\bra{0}U^{\dagger}AU \ket{0} + \bra{1}U^{\dagger}BU 
\ket{1}\right).
\end{split}
\end{equation}
Now we use lemma and write
\begin{equation}
\begin{split}
\bra{\psi} (A \oplus B) \ket{\psi} &= \frac12 \left(\bra{0}U^{\dagger}AU 
\ket{0} + \bra{1}U^{\dagger}BU \ket{1}\right)\\
&= \frac12 \left(\bra{0}U^{\dagger}AU \ket{0} + \bra{0}U^{\dagger}\sigma_y 
B^\mathrm{T} 
\sigma_yU 
\ket{0}\right)\\
&= \frac12 \left(\bra{0}U^{\dagger}(A+\sigma_y B^\mathrm{T} 
\sigma_y)U \ket{0}\right).
\end{split}
\end{equation}
\end{proof}

This theorem is valid for complex and real entangled shadow.
 Also we made no assumptions 
on $A$ and $B$, hence it is valid for non-normal matrices.

\subsection{Real maximally entangled shadow of $4 \times 4$ matrices}

\begin{definition}
Real maximally entangled numerical shadow 
$\mathcal{P}_A^{{\mathrm{ent} | \mathbb{R}}}$ 
of a matrix $A$ of size $N=N_1\times 
N_2$ is defined similarly to the maximally entangled shadow, but with restriction
to the real maximally entangled states.
\end{definition}

The following theorem gives a full characterization of the real maximally 
entangled numerical shadow of $4 \times 4$ matrices.
\begin{theorem}\label{th:real-ent}
Let $A$ be any $4 \times 4$ matrix 
then we have
\begin{equation}
\mathcal{P}_A^{\mathrm{ent} | \mathbb{R}}=
\frac{1}{2} \mathcal{P}_{Z_1^\mathrm{T} A Z_1}^{\mu_\mathbb{R}}+
\frac{1}{2} \mathcal{P}_{Z_2^\mathrm{T} A Z_2}^{\mu_\mathbb{R}},
\end{equation}
where, $Z_1$ and $Z_2$ are
\begin{equation}
Z_1 = \frac{1}{\sqrt 2}
\left(
\begin{smallmatrix}
1 & 0 \\
0 & 1 \\
0 & -1  \\
1 & 0
\end{smallmatrix}
\right) ,
 \ \
Z_2 = \frac{1}{\sqrt 2}
\left(
\begin{smallmatrix}
1 & 0 \\
0 & 1 \\
0 & 1  \\
-1 & 0
\end{smallmatrix}
\right).
\label{z1z2}
\end{equation}
\end{theorem}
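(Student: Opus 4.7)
The plan is to parameterize the set of real maximally entangled states in $\mathbb{C}^{2}\otimes\mathbb{C}^{2}$ explicitly and reduce the entangled shadow to a mixture of two ordinary real shadows on $2\times 2$ matrices.

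First I would exploit the standard identity $(X\otimes Y)\ket{\psi_+}=(\1\otimes YX^{T})\ket{\psi_+}$ with $\ket{\psi_+}=\tfrac{1}{\sqrt{2}}(\ket{00}+\ket{11})$, which allows every maximally entangled state to be written as $(\1\otimes V)\ket{\psi_+}$ for a single unitary $V\in U(2)$. A direct computation shows $(\1\otimes V)\ket{\psi_+}=\tfrac{1}{\sqrt{2}}(V_{11},V_{21},V_{12},V_{22})^{T}$, so the state is real up to an overall phase iff $V$ is a phase times a real matrix; unitarity then forces $V\in O(2)$ after absorbing the phase. Splitting $O(2)$ into its two connected components via
\begin{equation*}
V_\theta^{+}=\begin{pmatrix}\cos\theta & -\sin\theta\\ \sin\theta & \cos\theta\end{pmatrix}\in SO(2),\qquad V_\theta^{-}=\begin{pmatrix}\cos\theta & \sin\theta\\ \sin\theta & -\cos\theta\end{pmatrix}\in O(2)\setminus SO(2),
\end{equation*}
a short calculation yields $(\1\otimes V_\theta^{\pm})\ket{\psi_+}=Z_{1,2}\,u_\theta$ with $u_\theta=(\cos\theta,\sin\theta)^{T}$ and $Z_{1},Z_{2}$ exactly the matrices in~(\ref{z1z2}). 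Hence the set of real maximally entangled rays equals $\{Z_1 u:u\in S^{1}\}\cup\{Z_2 u:u\in S^{1}\}$.

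Finally I would appeal to the uniqueness of the orthogonally invariant probability measure on the set of real maximally entangled states, which is the pushforward of the normalized Haar measure on $O(2)$: it puts mass $\tfrac{1}{2}$ on each component and makes $u_\theta$ uniform on $S^{1}$ on each. Conditionally on the component label $i\in\{1,2\}$ one obtains
\begin{equation*}
\bra\psi A\ket\psi=u^{T}(Z_{i}^{T}AZ_{i})u,
\end{equation*}
whose law is by definition $\mathcal{P}^{\mu_{\mathbb{R}}}_{Z_{i}^{T}AZ_{i}}$; averaging the two conditional laws with equal weight gives the stated identity. The main technical point is the identification of the invariant measure on real maximally entangled states with Haar measure on $O(2)$, together with verifying that the $2$-to-$1$ ray redundancy $R\mapsto -R$ sits inside a single component of $O(2)$ (so the factor $\tfrac{1}{2}$ genuinely reflects the two components of $O(2)$ rather than the ray quotient). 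No assumption of normality or Hermiticity of $A$ enters anywhere in the argument, so the identity holds for arbitrary $4\times 4$ matrices as claimed.
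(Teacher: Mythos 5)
Your proof is correct and follows essentially the same route as the paper's: parameterize the real maximally entangled states as vectorized $2\times 2$ orthogonal matrices, split $O(2)$ by determinant into its two components, and observe that each component factors through $Z_1$ or $Z_2$ so that $\bra{\psi}A\ket{\psi}=u^{\mathrm T}(Z_i^{\mathrm T}AZ_i)u$ with $u$ uniform on $S^1$. The only difference is that you are more explicit than the paper about the measure-theoretic step — identifying the invariant measure with Haar measure on $O(2)$, which justifies the equal weights $\tfrac12$ and the uniformity of $u_\theta$ on each component — a point the paper's proof leaves implicit.
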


\begin{proof}
Any real maximally entangled pure state $\ket{\psi}$ of size four
may be written as a vector obtained from the elements
of an orthogonal matrix of order two, 
\begin{equation}
\ket{\psi} =\frac{1}{\sqrt 2} \mathrm{vec}(O(\theta)).
\end{equation}
First we consider an  orthogonal matrix $O(\theta)$ satisfying $\det O(\theta) 
= 1$. We have
\begin{equation}
\ket{\psi} = 
\frac{1}{\sqrt 2}
\left(
\begin{smallmatrix}
\cos \theta \\
\sin \theta \\
-\sin \theta \\
\cos \theta
\end{smallmatrix}
\right)
=
\frac{1}{\sqrt 2}
\left(
\begin{smallmatrix}
1 & 0 \\
0 & 1 \\
0 & -1 \\
1 & 0
\end{smallmatrix}
\right)
\left(
\begin{smallmatrix}
\cos \theta \\
\sin \theta
\end{smallmatrix}
\right).
\end{equation}
Hence,
\begin{equation}
\bra{\psi}A\ket{\psi} = \bra{r}Z_1^\mathrm{T} A Z_1 \ket{r}. 
\label{eq:max-ent-p1}
\end{equation}

Now we consider an  orthogonal matrix $O(\theta)$ satisfying $\det O(\theta) 
= -1$. We have
\begin{equation}
\ket{\psi} = 
\frac{1}{\sqrt 2}
\left(
\begin{smallmatrix}
\cos \theta \\
\sin \theta \\
\sin \theta \\
-\cos \theta
\end{smallmatrix}
\right)
=
\frac{1}{\sqrt 2}
\left(
\begin{smallmatrix}
1 & 0 \\
0 & 1 \\
0 & 1 \\
-1 & 0
\end{smallmatrix}
\right)
\left(
\begin{smallmatrix}
\cos \theta \\
\sin \theta
\end{smallmatrix}
\right).
\end{equation}
Hence,
\begin{equation}
\bra{\psi}A\ket{\psi} = \bra{r}Z_2^\mathrm{T} A Z_2 \ket{r}. 
\label{eq:max-ent-p2}
\end{equation}
Combining equations \eqref{eq:max-ent-p1} and \eqref{eq:max-ent-p2} we get the 
theorem.
\end{proof}
\medskip 

Examples are shown in Figures~\ref{fig:t2-ex-1} and \ref{fig:t2-ex-2}. The 
theorem is valid for non-normal matrices.
\begin{figure}[!h]
\subfloat[Real maximally entangled 
shadow]{\includegraphics{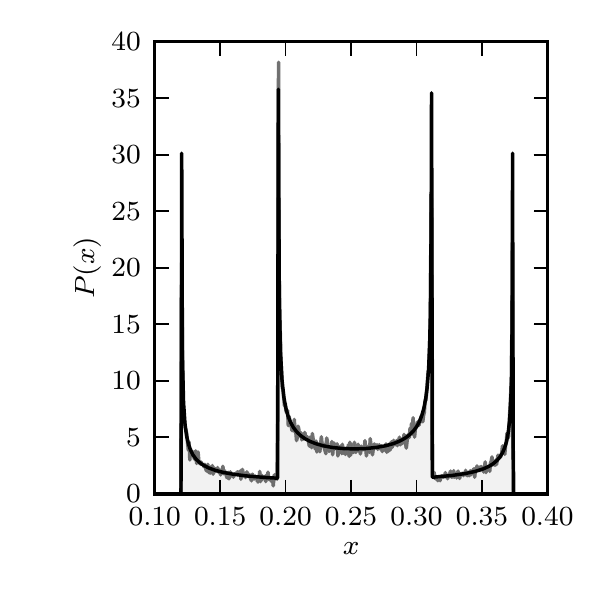}\label{fig:t2-ex-1}}
\subfloat[Complex maximally entangled 
shadow]{\includegraphics{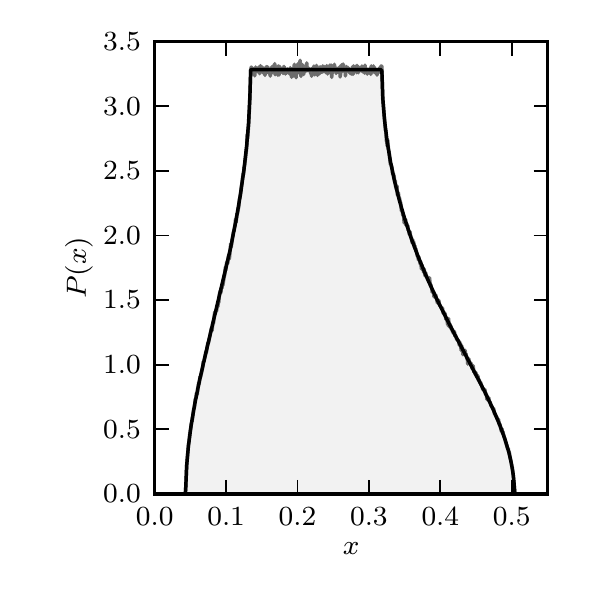}\label{fig:t2-ex-2}}
\caption{Fig a: Real maximally entangled numerical shadow of a
Hermitian matrix sampled from the Hilbert-Schmidt distribution.
Monte Carlo integration result (light gray) and shadow composition based on 
Theorem~\ref{th:real-ent} (black). Fig. b: complex maximally entangled 
numerical shadow of a generic Hermitian matrix.
Monte Carlo integration result (light gray) and shadow composition based on 
Theorem~\ref{th:complex-ent} (black). The difference in heights of the peaks 
(case a) and fluctuations at the central segment of the spectrum (case b)
 are due to numerical errors.}
\end{figure}
\begin{figure}[h!]
    \centering
    \begin{tabular}{m{0.5\linewidth}m{0.5\linewidth}}
    \subfloat[Real numerical shadow of matrix $Z_1AZ_1^\dagger$ 
obtained using Monte-Carlo sampling.]
{\includegraphics[trim=4mm 4mm 7mm 5mm, clip]{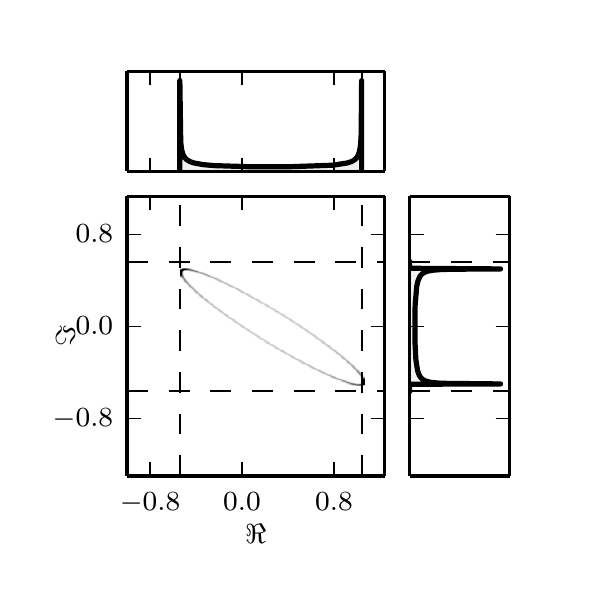}\label{}} &
    \subfloat[Real numerical shadow of matrix $Z_2AZ_2^\dagger$ 
obtained using Monte-Carlo sampling.]
{\includegraphics[trim=4mm 4mm 7mm 5mm, clip]{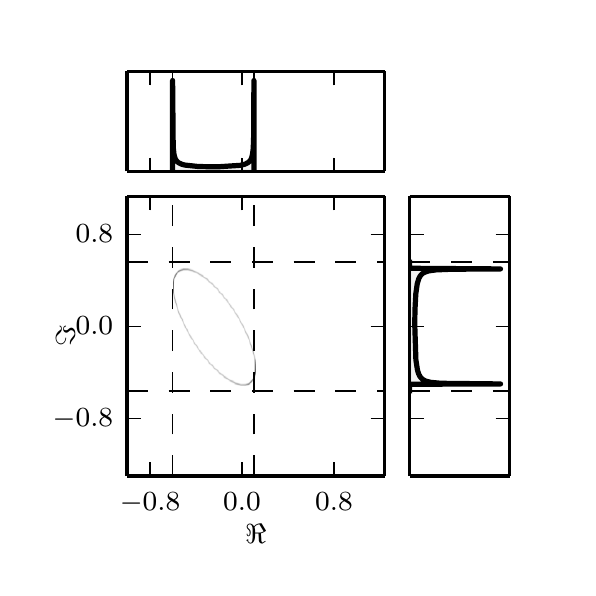}\label{}} \\
    \subfloat[Real entangled numerical shadow of matrix $A$ using Monte-Carlo sampling.]             {\includegraphics[trim=4mm 4mm 7mm 5mm, clip]{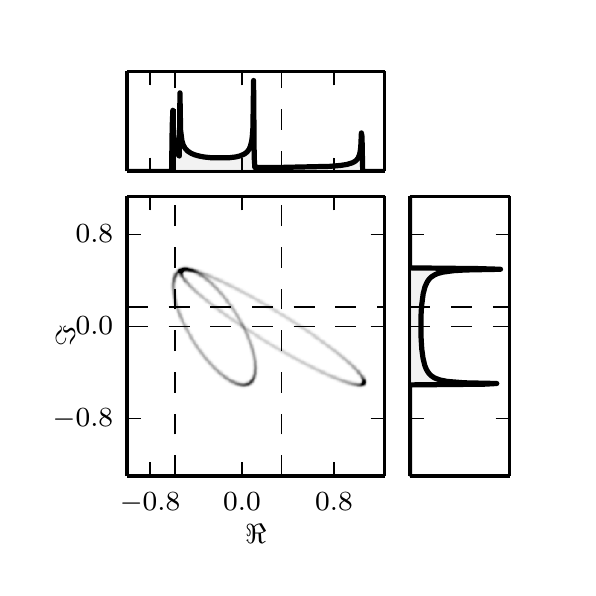}\label{}} &
        $
        A=
        \left(
        \begin{smallmatrix}
        1 & 0 & 0 & 1 \\
        0 & \ii & 0 & 1 \\
        0 & 0 & -1 & 0\\
        0 & 0 & 0 & -\ii
        \end{smallmatrix}
        \right)
        $
    
    \end{tabular}
    \caption[Visualisation of Theorem~\ref{th:real-ent} --- $A$.]{Visualisation of Theorem~\ref{th:real-ent} using matrix $A$.}
\end{figure}
\begin{figure}[h!]
    \centering
    \begin{tabular}{m{0.5\linewidth}m{0.5\linewidth}}
    \subfloat[Real numerical shadow of matrix $Z_1BZ_1^\dagger$ obtained using 
    Monte-Carlo sampling.]{\includegraphics[trim=4mm 8mm 7mm 11mm, 
    clip]{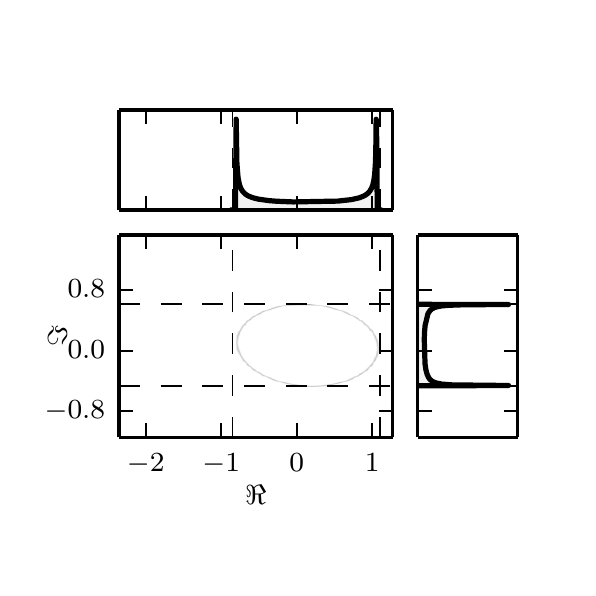}\label{}} &
    \subfloat[Real numerical shadow of matrix $Z_2BZ_2^\dagger$ obtained using 
    Monte-Carlo sampling.]{\includegraphics[trim=4mm 8mm 7mm 11mm, 
    clip]{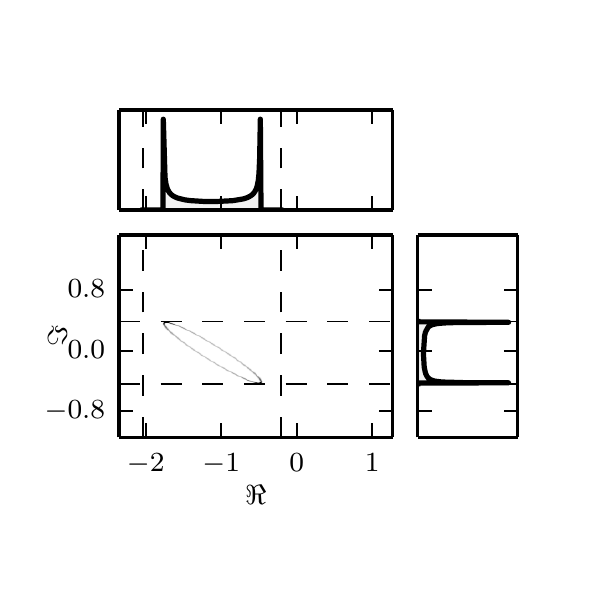}\label{}} \\
    \subfloat[Real entangled numerical shadow of matrix $B$ using Monte-Carlo sampling.]             {\includegraphics[trim=4mm 8mm 7mm 11mm, clip]{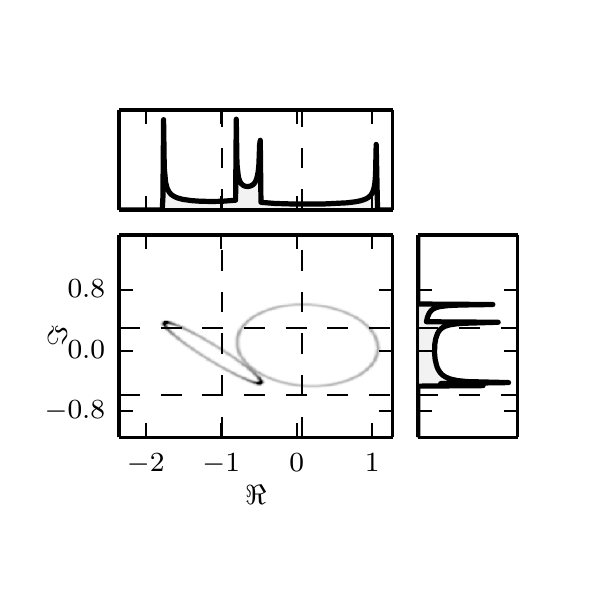}\label{}} &
    $B=\left(
            \begin{smallmatrix}
             0.3+0.5\ii & -0.8-0.2\ii & 0.4-0.5\ii & 1\\
            0.6-0.8\ii & -0.8-0.4\ii & -0.6+0.8\ii & -0.8+0.8\ii\\
            0.7-0.8\ii & -0.5-0.4\ii & -0.8 & 0.7-0.3\ii\\
            0.4+0.6\ii & -1.-0.8\ii & -0.4-0.4\ii & -0.7
            \end{smallmatrix}
            \right)$
    \end{tabular}
    \caption[Visualisation of Theorem~\ref{th:real-ent} --- $B$.]{
    Visualisation of Theorem~\ref{th:real-ent} using matrix $B$.
    }
\end{figure}
\subsection{Complex maximally entangled shadow of $4 \times 4$ matrices}
\begin{theorem}\label{th:complex-ent}
Given an {\sl arbitrary} matrix $A$ of order four 
its complex maximally entangled shadow is equal to the 
real shadow of matrix $W^\dagger A W$
\begin{equation}
\mathcal{P}^{\mu_\mathrm{ent} }_A = \mathcal{P}^{\mu_\mathbb{R 
}}_{W^\dagger A W},
\end{equation}
where $W$ is the matrix representing the 'magic basis',
\begin{equation}
W = \frac{1}{\sqrt{2}}
\begin{pmatrix}
0 & 0 & 1 & \ii \\
-1 & \ii & 0 & 0 \\
1 & \ii & 0 & 0 \\
0 & 0 & 1 & -\ii
\end{pmatrix}.
\end{equation}
\end{theorem}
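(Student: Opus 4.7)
The plan is to invoke the defining property of the magic basis: the unitary $W$ is constructed so that its columns are maximally entangled two--qubit states, and, more importantly, any complex maximally entangled state of two qubits can be written (up to a global phase) as $W|r\rangle$ for some real unit vector $|r\rangle\in\mathbb{R}^4$. Given this, the expectation value $\langle\psi|A|\psi\rangle$ immediately reduces to $\langle r|W^\dagger A W|r\rangle$ because the global phase cancels, and the identification of the induced measures gives the claim.

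More concretely, first I would verify directly from the explicit form \eqref{z1z2}-type construction of $W$ that each column $W|j\rangle$, when reshaped to a $2\times 2$ matrix, has equal singular values, so it is a maximally entangled state. Second, for a general normalized $|\psi\rangle = W|c\rangle$ with $|c\rangle\in\mathbb{C}^4$, I would use the well--known concurrence formula in the magic basis,
\begin{equation}
C(|\psi\rangle)=\Bigl|\sum_{j=1}^{4} c_j^2\Bigr|,
\end{equation}
together with the fact that maximally entangled two--qubit pure states are exactly those with $C=1$. Combining $\sum_j|c_j|^2=1$ with $|\sum_j c_j^2|=1$ and the inequality $|\sum_j c_j^2|\le\sum_j|c_j|^2$ (equality iff all $c_j^2$ share a common phase) forces the existence of a phase $e^{\ii\alpha}$ such that $e^{-\ii\alpha}c_j\in\mathbb{R}$ for all $j$. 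Hence every complex maximally entangled state is of the form $e^{\ii\alpha}W|r\rangle$ with $|r\rangle\in\mathbb{R}^4$, $\|r\|=1$.

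Third, I would argue that under this parametrization the Fubini--Study measure on the set of maximally entangled states pushes forward to the uniform measure on the real sphere $S^3\subset\mathbb{R}^4$ modulo the sign ambiguity $r\leftrightarrow -r$. This is essentially a unitary--invariance argument: multiplication of $|\psi\rangle$ by a global phase is already quotiented out, and the remaining parameter space $\mathbb{RP}^3\simeq S^3/\{\pm 1\}$ inherits the rotationally invariant measure because the Fubini--Study measure on maximally entangled states is invariant under $U\otimes\bar{U}$, which in the magic basis acts as an orthogonal rotation on $r$. Once this is established, the equality
\begin{equation}
\langle\psi|A|\psi\rangle=\langle r|W^\dagger AW|r\rangle
\end{equation}
(valid because $|e^{\ii\alpha}|^2=1$) yields the distributional identity stated in the theorem.

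The main obstacle is the second step, namely the magic--basis characterization of maximally entangled states. The concurrence identity used there is standard for Hermitian or general complex $2\otimes 2$ states, but deriving it cleanly here requires either a short computation with $\sigma_y\otimes\sigma_y$ (noting that in the magic basis $W^\dagger(\sigma_y\otimes\sigma_y)W^{*}=\1$, so that $|\psi\rangle$ being maximally entangled is equivalent to $(\sigma_y\otimes\sigma_y)|\psi^{*}\rangle=e^{\ii\beta}|\psi\rangle$ for some $\beta$) or an invocation of the Wootters/Hill result. I expect the verification of the pushforward measure to be routine by unitary invariance, so the bulk of the work lies in making the $|\psi\rangle = e^{\ii\alpha}W|r\rangle$ parametrization precise.
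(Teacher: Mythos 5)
Your proposal is correct in substance, but it reaches the key parametrization by a different route than the paper. Both arguments hinge on the same lemma: every complex maximally entangled two--qubit state equals $e^{\ii\alpha}W\ket{r}$ for some real unit vector $\ket{r}$, after which $\bra{\psi}A\ket{\psi}=\bra{r}W^\dagger AW\ket{r}$ and the measure identification finish the job. The paper establishes the lemma constructively: it writes $\ket{\psi}=\mathrm{vec}(V)$ with $V\in SU(2)$, inserts an explicit angle parametrization of $SU(2)$, and observes that the reshaped vector is exactly $W$ applied to the Hopf coordinates $(\Re z_1,\Im z_1,\Re z_2,\Im z_2)$ of $S^3$. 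This makes the pushforward of the measure essentially transparent (Haar on $SU(2)$ is the round measure on $S^3$) at the cost of an explicit computation. You instead invoke the Wootters--Hill characterization: in the magic basis the concurrence is $\bigl|\sum_j c_j^2\bigr|$, and saturation of $\bigl|\sum_j c_j^2\bigr|\le\sum_j|c_j|^2$ forces the coefficients to be real up to a common phase. This is cleaner and basis--free in spirit, but it imports a nontrivial external fact (or requires the $\sigma_y\otimes\sigma_y$ computation you sketch), and it leaves the measure pushforward as a separate step. That step is sound as you outline it --- the relevant point is that $W^\dagger(U_1\otimes U_2)W$ is real orthogonal for $U_1,U_2\in SU(2)$, realizing $SO(4)\cong(SU(2)\times SU(2))/\mathbb{Z}_2$ (the very fact the paper notes after the theorem) --- though note the transitive action on maximally entangled states is by $U_1\otimes U_2$ with independent factors, not by $U\otimes\bar U$, which is the stabilizer--type action on $\sum_i\ket{ii}$; this is a minor slip that does not affect the conclusion. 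In short: same skeleton, different proof of the central lemma; the paper's version is self--contained and explicit, yours is shorter if one grants the concurrence formula.
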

The above theorem is related to the well known fact in the group theory, that 
\begin{equation}
SO(4) = (SU(2) \times SU(2)) / \mathbb{Z}_2.
\end{equation}
\begin{proof}
Any maximally entangled two-qubit state $\ket{\psi}$ can be written as
\begin{equation}
\ket{\psi} = \mathrm{vec}(V), \ {\rm where} \  V \in SU(2).
\end{equation}
Using a parameterization of $SU(2)$ we can write
\begin{equation}
V =
\begin{pmatrix}
\ee^{\ii \xi_2} \cos\eta & \ee^{\ii \xi_1} \sin \eta \\
-\ee^{-\ii \xi_1} \sin \eta & \ee^{-\ii \xi_2} \cos\eta
\end{pmatrix}.
\end{equation}
Reshaping this matrix into a vector of length four
we obtain the state 
\begin{equation}
\ket{\psi} = \frac{1}{\sqrt{2}}
\begin{pmatrix}
\cos \xi_2 \sin \eta + \ii \sin \xi_2 \cos \eta \\
-\cos \xi_1 \sin \eta + \ii \sin \xi_1 \sin \eta \\
\cos \xi_1 \sin \eta + \ii \sin\xi_1 \sin \eta \\
\cos \xi_2 \cos \eta - \ii \sin \xi_2 \cos \eta
\end{pmatrix}.\label{eq:vectorized}
\end{equation}

On the other hand, consider the Hopf parameterization of the 3-sphere $S^3$ 
embedded in $\C^2$. A point on this sphere can be expressed as
\begin{equation}
\begin{cases}
z_1  =  \ee^{\ii \xi_1} \sin \eta \\
z_2  =  \ee^{\ii \xi_2} \cos \eta.
\end{cases}
\end{equation}

A point on the 3-sphere may be written in real coordinates $(r_1, r_2, r_3, 
r_4)$ as
\begin{equation}
\begin{pmatrix}
r_1 \\
r_2 \\
r_3 \\
r_4 \\
\end{pmatrix}
=
\begin{pmatrix}
\Re z_1 \\
\Im z_1 \\
\Re z_2 \\
\Im z_2
\end{pmatrix}
=
\begin{pmatrix}
\cos \xi_1 \sin \eta \\
\sin \xi_1 \sin \eta \\
\cos \xi_2 \cos \eta \\
\sin \xi_2 \cos \eta
\end{pmatrix}.
\label{eq:real-vec}
\end{equation}
Now, using Equation~\eqref{eq:real-vec}, we can rewrite 
Equation~\eqref{eq:vectorized} as
\begin{equation}
\ket{\psi} =
\frac{1}{\sqrt{2}}
\begin{pmatrix}
r_3 +\ii r_4 \\
-r_1 + \ii r_2 \\
r_1 + \ii r_2 \\
r_3 - \ii r_4
\end{pmatrix}
=
\frac{1}{\sqrt{2}}
\begin{pmatrix}
0 & 0 & 1 & \ii \\
-1 & \ii & 0 & 0 \\
1 & \ii & 0 & 0 \\
0 & 0 & 1 & -\ii
\end{pmatrix}
\begin{pmatrix}
r_1\\
r_2\\
r_3\\
r_4
\end{pmatrix}.
\end{equation}
Hence, we can write
\begin{equation}
\bra{\psi} A \ket{\psi} = \bra{r} W^\dagger A W \ket{r},
\end{equation}
where $\ket{r}$ is a real vector defined in equation~\eqref{eq:real-vec}.
\end{proof} 

\section{Concluding remarks}\label{sec:remarks}

In this work we analyzed probability distributions on the
complex plane induced by projecting the set of quantum states,
(i.e. Hermitian, positive and  normalized matrices of a given size $N$),
endowed with a certain probability measure. In the 
case of the unique, unitarily invariant Haar measure
on the set of complex pure states, this distribution
coincides with the standard numerical shadow \cite{Shd1,GS2012} of
a certain matrix $A$ of size $N$.
The case of a normal matrix corresponds to the projection
of the unit simplex covered uniformly onto a plane \cite{shadow1}.
If the matrix $A$ is Hermitian, its (complex) numerical shadow is supported on 
an interval on the real axis, and is equivalent to the $B$--spline 
with knots at the eigenvalues of $A$.

The real shadow of a matrix corresponds to the Haar 
measure restricted to the set of real pure states \cite{shadow3}.
For a real symmetric $A$ its real shadow is shown to 
be equivalent to a to the projection
of the unit simplex covered by the Dirichlet measure.
The main result of this work consists in 
Theorem~\ref{th:main-theorem}, which establishes 
an explicit exact formula for the real shadow of any real 
symmetric $A$ with prescribed spectrum $(a_1,\dots, a_N)$.

As the real shadow of a matrix corresponds to the Dirichlet distribution
with its parameters equal to $k_1= k_2=\dots=k_N=1/2$, it is natural to 
generalize
it by considering also other values of this parameter. For instance, the case 
of complex shadow $\mathcal{P}_A=\mathcal{P}^{\mathbb{C}}_A$
corresponds to the case $k_i=1$.
This fact implies that the real shadow of an extended 
matrix is equivalent to the complex shadow, 
\begin{equation}
\mathcal{P}^{\mathbb{R}}_{A\otimes {\mathbb I_2}}(x) =
\mathcal{P}_A(x) .
\label{prodshad}
\end{equation}
This result allows us to consider the real shadow of a real symmetric  matrix
$C$ of an even size $2N$ as a generalization of the $B$ spline, which is recovered,
if each eigenvalue is doubly degenerated. In general, each of $N$ knot points
of the standard $B$--spline can be splitted into two halves, 
and each eigenvalue $\lambda_i$ of $C$
 can be considered as a 'half of the knot point', 
as $2N$ points $\{ \lambda_i\}_{i=1}^{2N}$
determine the generalized $B$-spline equal to the real shadow of $C$.

Analyzing the generalized Dirichlet distribution one needs not to
restrict the attention to parameters $k_i$ equal to $1/2$ or $1$.
For instance, one can  consider the shadow of a matrix 
of an even order with respect to quaternion states
which  corresponds to the Dirichlet distribution with
all parameters equal,  $k_i=2$, see \ref{sec:quaternion-shadow}.

As another example of the restricted shadow we analyzed
entangled shadow of a matrix of an order $N$ equal to a composite
number. As before we distinguish the shadow with respect to
complex (or real) maximally entangled states.
Note that these probability distributions 
in general are supported on non--convex sets.
In the simplest case of $N=4$ we found explicit
formulae for the complex and real  entangled shadows
 by relating it to the real shadows of suitably transformed matrices.
As such shadows visualize projection of the set of complex/real
maximally entangled states onto a plane \cite{shadow3}
it is likely to expect that such tools will be useful in studying the
structure of the set of maximally entangled states.

\bigskip 
Acknowledgements:
It is a pleasure to thank John Holbrook for several discussions
on real numerical shadow and for providing us a copy of his paper
prior to publication.
This research was supported by the
the Polish National Science Centre (NCN):
P. Gawron under the grant number N N516 481840, Z.Pucha{\l}a under the grant 
number DEC-2012/04/S/ST6/00400 while K.~{\.Z}yczkowski and {\L}. Pawela 
acknowledge support by the grant number N202 090239.

\newpage
\appendix
\section{Quaternion shadow} \label{sec:quaternion-shadow}

In the case of the real shadow one considers random normalized real
vectors with distribution invariant to orthogonal transformations.
This distribution is induced by a Haar measure on the orthogonal group.
In a similar fashion, one can introduce the quaternion shadow, 
$\mathcal{P}^{\mathbb{H}}_A$,
defined as a probability distribution of expectation values taken among
random normalized quaternion vectors, with distribution invariant with
respect to symplectic operations.

An $N$-vector with quaternion $\mathbb{H}$ entries is replaced by a
$2N\times2$ complex matrix, where 
$a_{0}+a_{1}\mathbf{i}+a_{2}\mathbf{j}+a_{3}\mathbf{k}$ is mapped to%
\begin{equation}
\nu\left(a_{0}+a_{1}\mathbf{i}+a_{2}\mathbf{j}+a_{3}\mathbf{k}\right)
=\left[
\begin{array}
[c]{cc}%
a_{0}+a_{1}\mathrm{i} & a_{2}+a_{3}\mathrm{i}\\
-a_{2}+a_{3}\mathrm{i} & a_{0}-a_{1}\mathrm{i}%
\end{array}
\right]  .
\end{equation}
We use $\nu$ to also indicate the map $N$-vectors to $2N\times2$ matrices.
Suppose $A$ is a $2N\times2N$ complex Hermitian matrix. Consider the numerical
range-type map from the unit sphere in $\mathbb{H}^{N}$ to $\mathbb{R}:$%
\begin{equation}
\xi\mapsto\frac{1}{2}\Tr\left(  \nu\left(  \xi\right)  ^{\dagger}A\nu\left(
\xi\right)  \right)  .
\end{equation}
Note $\nu\left(  \xi\right)  ^{\dagger}A\nu\left(  \xi\right)  $ is a $2\times2$
complex Hermitian matrix. By direct computation we find that the same values
are obtained if $A$ is transformed as follows:%
\begin{equation}
q\left(\left[
\begin{array}
[c]{cc}%
a_{11} & a_{12}\\
a_{21} & a_{22}%
\end{array}
\right]\right)  =
\frac{1}{2}
\left[
\begin{array}
[c]{cc}%
\left(  a_{11}+\overline{a_{22}}\right)  & \left(
a_{12}-\overline{a_{21}}\right) \\
\left(  a_{21}-\overline{a_{12}}\right)  & \left(
\overline{a_{11}}+a_{22}\right)
\end{array}
\right]  ,
\end{equation}
and this map is applied to each of the $N^{2}$ $2\times2$ blocks of $A$. Note
that these blocks correspond to quaternions of the form%
\begin{equation}
\left[
\begin{array}
[c]{cc}%
\alpha & \beta\\
-\overline{\beta} & \overline{\alpha}%
\end{array}
\right]  .
\end{equation}
Then the numerical range and shadow can be interpreted as those of an $N\times
N$ quaternionic Hermitian matrix. The probability density is Dirichlet,
parameter 2, using $N$ real eigenvalues. The transformed matrix $q\left(
A\right)  $ has (duplicates) pairs of eigenvalues.

Here is a trivial example:%
\begin{equation}
A=\left[
\begin{array}
[c]{cccc}%
0 & 1 & 0 & 0\\
1 & 0 & 1 & 0\\
0 & 1 & 0 & 1\\
0 & 0 & 1 & 0
\end{array}
\right]  ,q\left(  A\right)  =\left[
\begin{array}
[c]{cccc}%
0 & 0 & 0 & -\frac{1}{2}\\
0 & 0 & \frac{1}{2} & 0\\
0 & \frac{1}{2} & 0 & 0\\
-\frac{1}{2} & 0 & 0 & 0
\end{array}
\right]  .
\end{equation}
The eigenvalues of $A$ are $\pm\frac{1}{2}\pm\frac{1}{2}\sqrt{5}$ and the
eigenvalues of $q\left(  A\right)  $ are $-\frac{1}{2},-\frac{1}{2},\frac
{1}{2},\frac{1}{2}$.

Consider $M_{2}\left(  \mathbb{C}\right)  $ as an eight-dimensional vector
space over $\mathbb{R}$ with the basis:%
\begin{equation}
\begin{split}
\mathbf{1} &  =\left[
\begin{array}
[c]{cc}%
1 & 0\\
0 & 1
\end{array}
\right]  ,\mathbf{i}=\left[
\begin{array}
[c]{cc}%
\mathrm{i} & 0\\
0 & -\mathrm{i}%
\end{array}
\right]  ,\mathbf{j}=\left[
\begin{array}
[c]{cc}%
0 & 1\\
-1 & 0
\end{array}
\right]  ,\mathbf{k}=\left[
\begin{array}
[c]{cc}%
0 & \mathrm{i}\\
\mathrm{i} & 0
\end{array}
\right]  ,\\
\mathbf{\zeta} &  =\left[
\begin{array}
[c]{cc}%
\mathrm{i} & 0\\
0 & \mathrm{i}%
\end{array}
\right]  ,\mathbf{\zeta i}=\left[
\begin{array}
[c]{cc}%
-1 & 0\\
0 & 1
\end{array}
\right]  ,\mathbf{\zeta j}=\left[
\begin{array}
[c]{cc}%
0 & \mathrm{i}\\
-\mathrm{i} & 0
\end{array}
\right]  ,\mathbf{\zeta k}=\left[
\begin{array}
[c]{cc}%
0 & -1\\
-1 & 0
\end{array}
\right]  ,
\end{split}
\end{equation}
(Pauli matrices). The basis is orthonormal with the inner product
\begin{equation}
\left\langle \alpha,\beta\right\rangle =\frac{1}{2}\operatorname{Re}\Tr\left(
\alpha\beta^{\dagger}\right)  ,
\end{equation}
and $\left\langle \alpha,\beta\right\rangle =\left\langle \alpha^{\dagger}%
,\beta^{\dagger}\right\rangle $. Then $q\left(  \alpha\right)  =\left\langle
\alpha,\mathbf{1}\right\rangle \mathbf{1+}\left\langle \alpha,\mathbf{i}%
\right\rangle \mathbf{i+}\left\langle \alpha,\mathbf{j}\right\rangle
\mathbf{j+}\left\langle \alpha,\mathbf{k}\right\rangle \mathbf{k}$ and
$q\left(  \alpha^{\dagger}\right)  =q\left(  \alpha\right)  ^{\dagger}$. Set
$\mathbb{H}=\mathrm{span}_{\mathbb{R}}\left\{  \mathbf{1},\mathbf{i}%
,\mathbf{j},\mathbf{k}\right\}  $ (a $\ast$-subalgebra). Now suppose $\left(
\alpha_{ij}\right)  _{i,j=1}^{N}$ is a Hermitian matrix with entries in
$M_{2}\left(  \mathbb{C}\right)  $ (that is, $\alpha_{ji}=\alpha_{ij}^{\dagger}%
$), and $\left(  \beta_{i}\right)  _{i=1}^{N}$ is a vector with entries in
$\mathbb{H}$. 
\begin{lemma}
The following equality holds
\begin{equation}
\Tr\left(  \sum_{i,j=1}^{N}\beta_{i}^{\dagger}\alpha_{ij}\beta_{j}\right)
=\Tr\left(  \sum_{i,j=1}^{N}\beta_{i}^{\dagger}q\left(  \alpha_{ij}\right)
\beta_{j}\right)  .
\end{equation}
\end{lemma}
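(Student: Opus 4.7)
The plan is to show that the ``error'' $R := \sum_{i,j}\beta_i^\dagger(\alpha_{ij}-q(\alpha_{ij}))\beta_j$ is traceless, by decomposing $M_2(\mathbb{C})$ along the orthonormal basis listed just before the lemma and then cancelling in pairs.

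First I would observe, using the explicit basis, that $M_2(\mathbb{C})=\mathbb{H}\oplus\zeta\mathbb{H}$ is an orthogonal decomposition with respect to $\langle\alpha,\beta\rangle=\tfrac{1}{2}\operatorname{Re}\Tr(\alpha\beta^\dagger)$, and that $q$ is precisely the orthogonal projection onto $\mathbb{H}$. Hence I can write $\alpha_{ij}=q(\alpha_{ij})+\zeta h_{ij}$ for unique $h_{ij}\in\mathbb{H}$, so $R=\sum_{i,j}\beta_i^\dagger\zeta h_{ij}\beta_j$. Since $\zeta=\mathrm{i}\mathbf{1}$ is a scalar matrix it is central, and since $\mathbb{H}$ is a $*$-subalgebra, the element $X_{ij}:=\beta_i^\dagger h_{ij}\beta_j$ lies in $\mathbb{H}$; thus $R=\zeta\sum_{i,j}X_{ij}$.

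Next I would compute $\Tr(\zeta X)$ for a general $X=x_0\mathbf{1}+x_1\mathbf{i}+x_2\mathbf{j}+x_3\mathbf{k}\in\mathbb{H}$ (with $x_\ell\in\mathbb{R}$). Only the $\mathbf{1}$-summand has nonzero trace, so $\Tr(\zeta X)=2\mathrm{i}x_0$, which is purely imaginary. The same calculation, together with $\mathbf{1}^\dagger=\mathbf{1}$ and $\mathbf{i}^\dagger=-\mathbf{i}$, $\mathbf{j}^\dagger=-\mathbf{j}$, $\mathbf{k}^\dagger=-\mathbf{k}$, yields the companion identity $\Tr(\zeta X^\dagger)=\Tr(\zeta X)$ (the $\mathbf{1}$-component is unchanged under $\dagger$). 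I also note that $X$ anti-Hermitian in $\mathbb{H}$ forces $x_0=0$, hence $\Tr(\zeta X)=0$ in that case.

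Then I would use the Hermiticity $\alpha_{ji}=\alpha_{ij}^\dagger$. Both $\mathbb{H}$ and $\zeta\mathbb{H}$ are invariant under $\dagger$ (since $\zeta^\dagger=-\zeta$), so $q$ commutes with $\dagger$, and
\[
\zeta h_{ji}=\alpha_{ji}-q(\alpha_{ji})=(\zeta h_{ij})^\dagger=-\zeta h_{ij}^\dagger,
\]
which gives the key relation $h_{ji}=-h_{ij}^\dagger$. Consequently $X_{ji}=\beta_j^\dagger h_{ji}\beta_i=-\beta_j^\dagger h_{ij}^\dagger\beta_i=-X_{ij}^\dagger$.

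Finally I would split $\Tr(R)=\sum_{i,j}\Tr(\zeta X_{ij})$ into diagonal and off-diagonal terms. For $i=j$: $h_{ii}=-h_{ii}^\dagger$ is anti-Hermitian in $\mathbb{H}$, so $X_{ii}=\beta_i^\dagger h_{ii}\beta_i$ is anti-Hermitian in $\mathbb{H}$ as well, hence has zero $\mathbf{1}$-component and $\Tr(\zeta X_{ii})=0$. For $i\neq j$: pair $(i,j)$ with $(j,i)$ and use the companion identity to get $\Tr(\zeta X_{ij})+\Tr(\zeta X_{ji})=\Tr(\zeta X_{ij})-\Tr(\zeta X_{ij}^\dagger)=0$. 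Summing over all pairs yields $\Tr(R)=0$, which is the claim. The only mildly subtle step is tracking signs in the $\dagger$-manipulations (because $\zeta^\dagger=-\zeta$ and the $\mathbb{H}$-basis splits into one Hermitian and three anti-Hermitian elements); everything else is algebraic bookkeeping once the orthogonal decomposition $M_2(\mathbb{C})=\mathbb{H}\oplus\zeta\mathbb{H}$ is in place.
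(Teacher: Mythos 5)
Your argument is correct, and it takes a genuinely different (essentially dual) route to the one in the paper, even though both rest on the same two structural facts: $q$ is the orthogonal projection of $M_{2}\left(\mathbb{C}\right)$ onto the $\ast$-subalgebra $\mathbb{H}$, and the matrix $\left(\alpha_{ij}\right)$ is Hermitian. The paper works \emph{inside} $\mathbb{H}$: it splits the sum into $i=j$ and $i<j$ parts, uses cyclicity of the trace to rewrite the symmetrized off-diagonal contribution as $2\operatorname{Re}\Tr\left(\alpha_{ij}\beta_{j}\beta_{i}^{\dagger}\right)=4\left\langle \alpha_{ij},\beta_{i}\beta_{j}^{\dagger}\right\rangle$ with $\beta_{i}\beta_{j}^{\dagger}\in\mathbb{H}$, and then replaces $\alpha_{ij}$ by $q\left(\alpha_{ij}\right)$ via the projection identity $\left\langle \alpha,\gamma\right\rangle=\left\langle q\left(\alpha\right),\gamma\right\rangle$ for $\gamma\in\mathbb{H}$. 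You instead work with the orthogonal complement $\zeta\mathbb{H}$: you write the discrepancy as $\zeta h_{ij}$ with $h_{ij}\in\mathbb{H}$, pull the central scalar $\zeta$ out, and kill the trace using the relation $h_{ji}=-h_{ij}^{\dagger}$ (forced by $\alpha_{ji}=\alpha_{ij}^{\dagger}$ and $\zeta^{\dagger}=-\zeta$) together with the observation that $\Tr\left(\zeta X\right)$ depends only on the $\dagger$-invariant $\mathbf{1}$-component of $X\in\mathbb{H}$. What your version buys is that the role of Hermiticity and of the complement $\zeta\mathbb{H}$ is made completely explicit (in the paper it is absorbed into the passage from $\Tr$ to $\operatorname{Re}\Tr$), and the diagonal and off-diagonal cases are handled by one uniform trace computation; the paper's version is slightly shorter because it never needs to introduce the components $h_{ij}$. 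All the sign bookkeeping in your $\dagger$-manipulations checks out.
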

\begin{proof}
Break up the sum into $i=j$ and $i<j$ parts. Then $\Tr\left(  \beta_{i}^{\dagger
}\alpha_{ii}\beta_{i}\right)  \in\mathbb{R}$ and%
\begin{equation}
\begin{split}
\Tr\left(  \beta_{i}^{\dagger}\alpha_{ii}\beta_{i}\right)   &  =
\Tr\left(
\alpha_{ii}\beta_{i}\beta_{i}^{\dagger}\right)  =2\left\langle \alpha_{ii}%
,\beta_{i}\beta_{i}^{\dagger}\right\rangle \\
&  =2\left\langle q\left(  \alpha_{ii}\right)  ,\beta_{i}\beta_{i}^{\dagger
}\right\rangle =\Tr\left(  \beta_{i}^{\dagger}q\left(  \alpha_{ii}\right)
\beta_{i}\right)  .
\end{split}
\end{equation}
For $i<j$ consider the typical term%
\begin{equation}
\begin{split}
\Tr\left(  \beta_{i}^{\dagger}\alpha_{ij}\beta_{j}\right)  +\Tr\left(  \beta
_{j}^{\dagger}\alpha_{ji}\beta_{i}\right)   &  =\Tr\left(  \beta_{i}^{\dagger}%
\alpha_{ij}\beta_{j}\right)  +\Tr\left(  \beta_{j}^{\dagger}\alpha_{ij}^{\dagger
}\beta_{i}\right)  \\
&  =2\operatorname{Re}\Tr\left(  \beta_{i}^{\dagger}\alpha_{ij}\beta_{j}\right)
=2\operatorname{Re}\Tr\left(  \alpha_{ij}\beta_{j}\beta_{i}^{\dagger}\right)  \\
&  =4\left\langle \alpha_{ij},\beta_{i}\beta_{j}^{\dagger}\right\rangle
=4\left\langle q\left(  \alpha_{ij}\right)  ,\beta_{i}\beta_{j}^{\dagger
}\right\rangle \\
&  =\Tr\left(  \beta_{i}^{\dagger}q\left(  \alpha_{ij}\right)  \beta_{j}\right)
+\Tr\left(  \beta_{j}^{\dagger}q\left(  \alpha_{ji}\right)  \beta_{i}\right)  ,
\end{split}
\end{equation}
because $q\left(  \alpha_{ji}\right)  =q\left(  \alpha_{ij}^{\dagger}\right)
=q\left(  \alpha_{ij}\right)  ^{\dagger}$. This proves the claim.
\end{proof}

Every quaternion Hermitian matrix can be diagonalized with symplectic
operations, thus when studying quaternion numerical shadow of Hermitian
matrices, without loss of generality, we can consider only diagonal matrices
with real elements on the diagonal. We note, that for such quaternion matrices,
the representation on a block complex matrices gives us $\nu(A) = A\otimes
\1_2$. Combining this with relation \eqref{prodshad}, we may write the
following chain of equalities
\begin{equation}
\mathcal{P}_{A\otimes \1_4}^{\mathbb{R}} = \mathcal{P}_{A\otimes \1_2} = \mathcal{P}_{A}^{\mathbb{H}}.
\end{equation}

\section{Proofs} \label{sec:proofs}

\begin{proof}[Proof of Lemma \ref{lemma:exp-value}]
Indeed (set $t_{N}:=1-\sum_{i=1}^{N-1}t_{i}$)%
\begin{align}
\mathcal{E}\left[  \left(  1-rX\right)  ^{-\widetilde{k}}\right]    &
=\sum_{n=0}^{\infty}\frac{\left(  \widetilde{k}\right)  _{n}}{n!}%
\int_{\mathbb{T}_{N-1}}\left(  \sum_{i=1}^{N}a_{i}t_{i}\right)  ^{n}%
d\mu_{\mathbf{k}}\nonumber\\
& =\sum_{n=0}^{\infty}\frac{\left(  \widetilde{k}\right)  _{n}}{n!}r^{n}%
\sum_{\alpha\in\mathbb{N}_{0}^{N},\left\vert \alpha\right\vert =n}\binom
{n}{\alpha}\frac{1}{\left(  \widetilde{k}\right)  _{\left\vert \alpha
\right\vert }}%
{\textstyle\prod_{i=1}^{N}}
\left(  k_{i}\right)  _{\alpha_{i}}\nonumber\\
& =\sum_{\alpha\in\mathbb{N}_{0}^{N}}r^{\left\vert \alpha\right\vert }%
{\textstyle\prod_{i=1}^{N}}
\frac{\left(  k_{i}\right)  _{\alpha_{i}}}{\alpha_{i}!}=%
{\textstyle\prod_{i=1}^{N}}
\left(  1-ra_{i}\right)  ^{-k_{i}}.
\end{align}
We used the negative binomial theorem and the multinomial theorem with the
multinomial coefficient $\binom{n}{\alpha}=\frac{n!}{\alpha!}$.
\end{proof}
\medskip

\begin{proof}[Proof of Proposition~\ref{topF}]
The required value is the integral of $d\mu_{\mathbf{k}}$ over the simplex
with vertices $\left(  0,0,\ldots,0\right)  ,\xi_{i}\left(  x\right)  $ for
$1\leq i\leq N-1$. Set $\xi_{i}^{\prime}\left(  x\right)  =\frac{a_{N}%
-x}{a_{N}-a_{i}}$. Change variables to $t_{i}=\xi_{i}^{\prime}\left(
x\right)  s_{i}$, then%
\begin{align}
1-F\left(  x\right)    & =\frac{\Gamma\left(  \widetilde{k}\right)  }%
{\Gamma\left(  k_{N}\right)  }\prod_{i=1}^{N-1}\frac{\xi_{i}^{\prime}\left(
x\right)  ^{k_{i}}}{\Gamma\left(  k_{i}\right)  }\int_{\mathbb{T}_{N-1}}%
{\textstyle\prod_{i=1}^{N-1}}
s_{i}^{k_{i}-1}\left(  1-\sum_{i=1}^{N-1}\xi_{i}^{\prime}\left(  x\right)
s_{i}\right)  ^{k_{N}-1}ds_{1}\ldots ds_{N-1}\\
& =\frac{\Gamma\left(  \widetilde{k}\right)  }{\Gamma\left(  k_{N}\right)
\Gamma\left(  \widetilde{k}-k_{N}\right)  }\prod_{i=1}^{N-1}\xi_{i}^{\prime
}\left(  x\right)  ^{k_{i}}\sum_{\alpha\in\mathbb{N}_{0}^{N-1}}\frac{\left(
1-k_{N}\right)  _{\left\vert \alpha\right\vert }}{\left(  \widetilde{k}%
-k_{N}\right)  _{\left\vert \alpha\right\vert +1}}\prod_{i=1}^{N-1}%
\frac{\left(  k_{i}\right)  _{\alpha_{i}}}{\alpha_{i}!}\xi_{i}^{\prime}\left(
x\right)  ^{\alpha_{i}}. \nonumber
\end{align}
The negative binomial series converges when $0\leq\xi_{i}^{\prime}\left(
x\right)  <1$ for all $i$, that is, $a_{N-1}<x\leq a_{N}$. Now replace
$\xi_{i}^{\prime}\left(  x\right)  $ by $\frac{a_{N}-x}{a_{N}-a_{i}}$ to
obtain the stated formula.
\end{proof}
\medskip

\begin{proof}[Proof of Lemma~\ref{kparfrac}]
The proof follows the method described in Henrici \cite[vol.~1, p.~555]{H}. Use the
notation from equation (\ref{parfrac1}). Set $f_{i}\left(  r\right)  =%
{\displaystyle\prod\nolimits_{j\neq i}}
\left(  1-ra_{j}\right)  ^{-k}$, then%
\begin{equation}%
{\displaystyle\prod\nolimits_{j=1}^{N}}
\left(  1-ra_{j}\right)  ^{-k}=\sum_{j=1}^{k}\frac{\beta_{ij}}{\left(
1-ra_{i}\right)  ^{j}}+q_{i}\left(  r\right)  f_{i}\left(  r\right)  ,
\end{equation}
where $q_{i}\left(  r\right)  $ is a polynomial. Multiply the equation by
$\left(  1-ra_{i}\right)  ^{k}$ to obtain%
\begin{equation}
f_{i}\left(  r\right)  =\sum_{j=1}^{k}\beta_{ij}\left(  1-ra_{i}\right)
^{k-j}+\left(  1-ra_{i}\right)  ^{k}q_{i}\left(  r\right)  f_{i}\left(
r\right)  .
\end{equation}
Apply $\left(  \frac{d}{dr}\right)  ^{m}$ to both sides and set $r=\dfrac
{1}{a_{i}}$. This cancels out every term on the right side except for $j=k-m$;
this term becomes $\beta_{i,k-m}\left(  -a_{i}\right)  ^{m}m!$. By the
generalized product rule%
\begin{equation}
\left(  \frac{d}{dr}\right)  ^{m}f_{i}\left(  r\right)  =\sum_{\alpha
\in\mathbb{N}_{0}^{N},\left\vert \alpha\right\vert =m,\alpha_{i}=0}\binom
{m}{\alpha}\prod_{j=1,j\neq i}^{N}\left(  k\right)  _{\alpha_{i}}a_{j}%
^{\alpha_{i}}\left(  1-ra_{j}\right)  ^{-k-\alpha_{j}}.
\end{equation}
Set $r=\frac{1}{a_{i}}$ to get the stated values of $\beta_{i,k-m}$ (note
$\left(  1-\frac{a_{j}}{a_{i}}\right)  ^{-k-\alpha_{j}}=a_{i}^{k+\alpha_{i}%
}\left(  a_{i}-a_{j}\right)  ^{-k-\alpha_{j}}$).
\end{proof}

\bibliography{shadow_V}
\end{document}